\def\newaliasedtheorem#1[#2]#3{
	\newaliascnt{#1@alt}{#2}
	\newtheorem{#1}[#1@alt]{#3}
	\expandafter\newcommand\csname #1@altname\endcsname{#3}
}
\numberwithin{equation}{section}
\newtheoremstyle{slanted}{\topsep}{\topsep}{\slshape}{}{\bfseries}{.}{.5em}{}
\theoremstyle{plain}
\newtheorem{theorem}{Theorem}[section]
\theoremstyle{definition}
\theoremstyle{remark}
\newcommand{\eps}{\varepsilon}
\let\altphi\phi
\let\phi\varphi
\let\varphi\altphi
\let\altphi\undefined
\newcommand{\abs}[1]{\left\lvert#1\right\rvert}
\DeclareMathOperator{\supp}{supp}
\newcommand\numleq[1]%
\DeclareMathOperator{\de}{d}
\newfont{\tmpf}{cmsy10 scaled 2500}
\def\XXint#1#2#3{{\setbox0=\hbox{$#1{#2#3}{\int}$ }
		\vcenter{\hbox{$#2#3$ }}\kern-.6\wd0}}
\begin{document}
	
	\title{Polynomial  and horizontally polynomial functions on Lie groups}
 
	\author[G. Antonelli and E. Le Donne]{Gioacchino Antonelli and Enrico Le Donne}
\address{\textsc{Gioacchino Antonelli}: 
Scuola Normale Superiore, Piazza dei Cavalieri, 7, 56126 Pisa, Italy}
\email{gioacchino.antonelli@sns.it}
\address{\textsc{Enrico Le Donne}: 
Dipartimento di Matematica, Universit\`a di Pisa, Largo B. Pontecorvo 5, 56127 Pisa, Italy \\
University of Jyv\"askyl\"a, Department of Mathematics and Statistics, P.O. Box (MaD), FI-40014, Finland\\
\& 
Department of Mathematics, University of Fribourg, Chemin du Mus\'ee 23, 1700 Fribourg,
Switzerland
}
\email{enrico.ledonne@unifr.ch}
	
	\renewcommand{\subjclassname}{%
 \textup{2010} Mathematics Subject Classification}
\subjclass[]{ 
53C17, %   Sub-Riemannian geometry
%53C60,   % Finsler spaces and generalizations 
% 53C30,  % Homogeneous manifolds
22E25, % Nilpotent and solvable Lie groups
%28A75,  %  Length, area, volume, other geometric measure theory
%49N60, % Regularity of solutions 
%49Q15. %  Geometric measure and integration theory, integral and normal currents
%53C38% Calibrations and calibrated geometries
%58C35 % Integration on manifolds; measures on manifolds
%26A16  % Lipschitz (Hlder) classes
%26B20 Integral formulas (Stokes, Gauss, Green, etc.)
%54Exx, % Spaces with richer structures 
%37L40 %Invariant measures
%58D05, %Groups of diffeomorphisms and homeomorphisms as manifolds
%22F50, %Groups as automorphisms of other structures
 %22DXX % Locally compact groups and their algebras
 22E30, %Analysis on real and complex Lie groups
% 22F30. % Homogeneous spaces
%14M17. %Homogeneous spaces and generalizations (within Algebraic geometry)
% 53C30 % Homogeneous manifolds
% 58D19 % Group actions and symmetry properties
% 58C25 % Differentiable maps
43A80, %Analysis on other specific Lie groups
08A40. %Operations, polynomials, primal algebras
}
	\keywords{Nilpotent Lie groups, polynomial maps, Leibman polynomial, polynomial on groups, horizontally affine functions, precisely monotone sets}
 \thanks{G.A. was partially supported by the European Research Council
 	(ERC Starting Grant 713998 GeoMeG `\emph{Geometry of Metric Groups}').
 E.L.D. was partially supported by the Academy of Finland (grant
288501
`\emph{Geometry of subRiemannian groups}' and by grant
322898
`\emph{Sub-Riemannian Geometry via Metric-geometry and Lie-group Theory}')
and by the European Research Council
 (ERC Starting Grant 713998 GeoMeG `\emph{Geometry of Metric Groups}'). The authors wish to thank Mattia Calzi and Fulvio Ricci for fruitful discussions and constructive feedback around the topic of the paper.
}
	
	\maketitle
	\begin{abstract}
		We generalize both the notion of polynomial functions on Lie groups and the notion of horizontally affine maps on Carnot groups. We fix a subset $S$ of the algebra $\mathfrak g$ of left-invariant vector fields on a Lie group $\mathbb G$ and we assume that $S$ Lie generates $\mathfrak g$. We say that a function $f:\mathbb G\to \mathbb R$ (or more generally a distribution on $\mathbb G$) is 
		{\em $S$-polynomial} if for all $X\in S$ there exists $k\in \mathbb N$ such that the iterated derivative $X^k f$ is zero in the sense of distributions.
		
		First, we show that all $S$-polynomial functions (as well as distributions) are represented by analytic functions and, if the exponent $k$ in the previous definition is independent on $X\in S$, they form a finite-dimensional vector space.
		
		Second, if $\mathbb G$ is connected and nilpotent we show that  $S$-polynomial functions are polynomial functions in the sense of Leibman.
		The same result may not be true for non-nilpotent groups.
		
		Finally, we show that in connected nilpotent Lie groups, being polynomial in the sense of Leibman, being a polynomial in exponential chart, and the vanishing of mixed derivatives of some fixed degree along directions of $\mathfrak g$  are equivalent notions.
	\end{abstract}

 	\tableofcontents
 	
 	\section{Introduction}
	Following the terminology of Gromov, a {\em polarized manifold} is a (connected) manifold equipped with a choice of a subbundle of its tangent bundle, which most of the times is assumed Lie bracket generating and called {\em space of horizontal directions}. 
	In subRiemannian geometry, in analysis, but also in group theory, there are several phenomena showing that the 
	bracket generation property upgrades a ``horizontal'' property to one in every direction: this is the case, for example, of the Chow-Rashevskii Theorem, see \cite{Mon02}, the Pansu differentiability Theorem \cite{Pan89} and from a more analytic point of view of the celebrated H\"ormander Theorem \cite{Hor67}.
	In this paper we consider functions on a polarized Lie group that have the property that are polynomials along horizontal directions. 
	
	We stress that polynomial maps between groups have been studied also from an algebraic point of view, see, e.g., \cite{Pas68, Buc70, Sze85, Lei02, GT06, KP20} and references therein. Even if our interest is mainly analytic and geometric, we will also highlight the connections of our results with the algebraic point of view.

Let $\mathbb G$ be a Lie group with Lie algebra $\mathfrak g$, seen as left-invariant vector fields on $\mathbb G$.  We fix a left-Haar measure $\mu$ on $\mathbb G$.
 Let  	  $S\subseteq \mathfrak g$ be a subset that is Lie bracket generating, i.e., the only subalgebra of $\mathfrak g$ that contains $S$ is $\mathfrak g$.

We say that a distribution $f$ on $\mathbb G$ is 
		{\em $S$-polynomial} if for all $X\in S$ there exists $k\in \mathbb N$ such that the iterated derivative $X^k f$ is zero in the sense of distributions on $\mathbb G$. We say that a distribution $f$ is {\em $S$-polynomial with degree at most $k$} if for every $X\in S$ we have that $X^kf$ is zero in the sense of distributions on $\mathbb G$. For basic definitions and properties of distributions on Lie groups we refer the reader to \cref{sec:Distr}. %Here, the derivative of a distribution $f$ along $X$ is done as follows %		$$ \int_G \psi X 
%$$\langle Xf,\phi\rangle :=  \langle f,X^\top\phi\rangle, \qquad \forall \phi\in C_c^\infty(\mathbb G;\mathbb R),$$
%where $X^\top$ is the adjoint of $X$ in $L^2(\mathbb G,\mu)$.
%Recall that for every 
The first main outcome of this paper is a regularity result for $S$-polynomial distributions on arbitrary Lie groups, see \cref{prop:FiniteDim} and \cref{proofThm1} for the proof of the following statement.
\begin{theorem}\label{thm:Intro1}
	Let $\mathbb G$ be a Lie group, let $f$ be a distribution on $\mathbb G$, and let $S\subseteq \mathfrak g$ be a subset of the Lie algebra $\mathfrak g$ that Lie generates $\mathfrak g$. If $f$ is $S$-polynomial, then it is represented by an analytic function. Moreover, the vector space of $S$-polynomial distributions with degree at most $k\in\mathbb N$ on each connected component of $\mathbb G$ is finite-dimensional. 
\end{theorem}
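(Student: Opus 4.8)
The plan is to isolate a finite, Lie-generating subfamily $X_1,\dots,X_m\in S$, possible since $\dim\mathfrak g<\infty$, and to work with the left ideal $J:=\sum_{i=1}^m U(\mathfrak g)X_i^{k}$ of the universal enveloping algebra, identified with the algebra of left-invariant differential operators $P\mapsto D_P$ on $\mathbb G$. The first step is a purely algebraic lemma: $J$ has finite codimension in $U(\mathfrak g)$. I would prove this with the symbol filtration. The $X_i$ generate $U(\mathfrak g)$ as an associative algebra, which is exactly the content of bracket generation, so $U(\mathfrak g)/J$ is spanned by words in the $X_i$. A word of length $\ell$ has symbol $\prod_i\xi_i^{a_i}$ in the commutative graded ring $\operatorname{gr}U(\mathfrak g)\cong S(\mathfrak g)$, with $\sum_i a_i=\ell$; once $\ell>m(k-1)$, some $a_{i_0}\ge k$, so the word coincides, modulo strictly shorter words, with an element of $U(\mathfrak g)X_{i_0}^{k}\subseteq J$. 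Downward induction on the length then shows that $U(\mathfrak g)/J$ is spanned by the images of words of length at most $m(k-1)$, whence $N:=\dim U(\mathfrak g)/J<\infty$.

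For smoothness I would turn the problem into a first-order system. Fix representatives $P_1,\dots,P_N\in U(\mathfrak g)$ of a basis of $U(\mathfrak g)/J$, with $P_1=1$, and for the a priori only distributional solution $f$ set $G:=(D_{P_1}f,\dots,D_{P_N}f)$. Because $f$ is $S$-polynomial with degree at most $k$ we have $D_Qf=0$ for every $Q\in J$ (write $Q=\sum_iA_iX_i^{k}$ and use $X_i^{k}f=0$), so reducing each $X_iP_s$ modulo $J$ produces constant matrices $C^1,\dots,C^m$ with
\[
X_iG=C^iG,\qquad i=1,\dots,m,
\]
in the sense of distributions. Differentiating once more gives $X_i^2G=(C^i)^2G$, hence $(\mathcal L-C)G=0$ with $\mathcal L:=\sum_{i=1}^m X_i^2$ and $C:=\sum_{i=1}^m(C^i)^2$. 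Since $X_1,\dots,X_m$ bracket-generate, $\mathcal L$ satisfies H\"ormander's condition, so $\mathcal L-C$, a diagonal sub-Laplacian system with constant zeroth-order coupling, is hypoelliptic \cite{Hor67}; therefore $G$, and in particular $f=D_{P_1}f=G_1$, is represented by a smooth function.

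For finite-dimensionality I would integrate the first-order system. Along the integral curve $\gamma(t)=g\exp(tX_i)$ of the left-invariant field $X_i$ the smooth map $G$ solves the linear ODE $\frac{d}{dt}(G\circ\gamma)=C^i(G\circ\gamma)$, so its values are determined by the value at $\gamma(0)$. By Chow--Rashevskii \cite{Mon02} the flows of $X_1,\dots,X_m$ join any two points of a connected component, so $G$ is determined on that component by $G(g_0)$ at a base point $g_0$. Thus $f\mapsto G(g_0)\in\mathbb R^N$ is injective on the space $V_k$ of $S$-polynomial functions of degree at most $k$ on the component, which therefore has dimension at most $N$.

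Finally, analyticity follows from finite-dimensionality together with left invariance. The space $V_k$ on the identity component is invariant under left translations $L_a\colon f\mapsto f(a^{-1}\,\cdot\,)$, because the $X_i$ are left-invariant; this gives a continuous homomorphism $\pi\colon\mathbb G\to GL(V_k)$ into a finite-dimensional group, which is automatically real-analytic. For fixed $x_0$ and $f\in V_k$ one then writes $f(ax_0)=\operatorname{ev}_{x_0}\!\big(\pi(a^{-1})f\big)$, an analytic function of $a$, and since right translation by $x_0$ is an analytic diffeomorphism, $f$ is analytic; running the same argument on each component (after translating the base point) yields the general statement. I expect the two delicate points to be, first, carrying the whole argument at the level of distributions up to the hypoellipticity step, so that smoothness is \emph{deduced} rather than assumed; and, second, the finite-codimension lemma, which is where bracket generation genuinely enters. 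The fact that smoothness reduces to the classical sum-of-squares theorem of H\"ormander, rather than to an analytic-hypoellipticity statement (which can fail), is precisely what makes analyticity accessible through the representation-theoretic final step instead.
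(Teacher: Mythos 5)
Your architecture (finite\--codimension left ideal $\Rightarrow$ first-order system $\Rightarrow$ hypoellipticity $\Rightarrow$ ODE along flows $\Rightarrow$ finite-dimensional representation $\Rightarrow$ analyticity) is appealing and genuinely different from the paper's, but it has a critical gap at its foundation: the proof of the algebraic lemma that $J=\sum_{i=1}^m U(\mathfrak g)X_i^{k}$ has finite codimension. Your ``downward induction on the length'' does not close. When you transpose adjacent letters to bring $X_{i_0}^{k}$ to the right end of a word, each correction term is a product in which one factor is a bracket $[X_a,X_b]$; this has lower \emph{filtration} degree in $U(\mathfrak g)$, but it is \emph{not} a shorter word in the alphabet $\{X_1,\dots,X_m\}$, and rewriting $[X_a,X_b]=X_aX_b-X_bX_a$ restores the original length. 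The pigeonhole step (``some letter occurs $k$ times'') needs the finite alphabet $S$, while the correction terms live outside it, so word length and filtration degree get conflated and the induction never terminates. The obstruction is visible at the level of symbols: your manipulations only ever certify that $\operatorname{gr}J$ contains the ideal $(\xi_1^{k},\dots,\xi_m^{k})\subseteq S(\mathfrak g)$, and since $\dim U(\mathfrak g)/J=\dim S(\mathfrak g)/\operatorname{gr}J$, finite codimension would force $S(\mathfrak g)/(\xi_1^{k},\dots,\xi_m^{k})$ to be finite-dimensional --- which is false whenever $\mathrm{span}\,S\subsetneq\mathfrak g$, i.e.\ in every interesting case (in the Heisenberg algebra with $k=2$, every power of $\xi_3$ survives in that quotient). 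So the lemma, if true, must be witnessed by combinations $\sum_i A_iX_i^{k}$ whose \emph{leading terms cancel}, producing symbols in $\operatorname{gr}J$ that do not lie in $(\xi_1^{k},\dots,\xi_m^{k})$; your induction never produces these, and that is exactly where Lie generation has to do quantitative work. The lemma is in fact true, but the proof I know is not a filtration exercise: one shows that $\sqrt{\operatorname{gr}J}$ contains each $\xi_{X_i}$ and is closed under the Poisson bracket of $S(\mathfrak g)\cong\operatorname{gr}U(\mathfrak g)$ (Gabber's involutivity theorem for characteristic ideals), hence contains $\xi_Y$ for every $Y$ in the Lie algebra generated by the $X_i$, i.e.\ all linear forms, whence $\operatorname{gr}J$ contains a power of the augmentation ideal. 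Since your matrices $C^i$ exist if and only if the codimension is finite, every later step rests on this hole.

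Granting the lemma, the rest of your proposal is essentially sound, modulo standard details (hypoellipticity of $\mathcal L\otimes I-C$ by putting the constant matrix $C$ in triangular form and bootstrapping the scalar H\"ormander operators $\mathcal L-\lambda$ \cite{Hor67}; continuity of the representation $\pi$ via point evaluations), and the final representation-theoretic analyticity argument is a nice alternative to the paper's. For comparison: the paper never considers $U(\mathfrak g)/J$. It proves a representation formula for smooth $k$-polynomial functions along concatenations of flows of elements of $S$ using the adjoint action (\cref{lem:Representation}, \cref{lem:EST1BISAdapted}), builds charts out of such concatenations (\cref{lem:ConnectionBIS}) so that a solution is an analytic expression of its finite jet at $e$ (\cref{prop:FiniteDim}), and passes from distributions to smooth functions by convolution with an approximate identity plus weak*-closedness of finite-dimensional subspaces, rather than by hypoellipticity. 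In effect, the paper's representation formula supplies analytically exactly the content your symbol induction is missing; to complete your proof you must either establish the finite-codimension lemma honestly (e.g.\ via involutivity of characteristic varieties) or replace it by such a propagation-along-flows argument.
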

	It is natural to ask if an $S$-polynomial distribution on $\mathbb G$ is actually a polynomial in some sense. Various definitions of polynomial maps between groups have been proposed and studied in the literature, see \cite{Pas68} for arbitrary groups, and \cite{Buc70,Sze85} and references therein for the case of Abelian groups. A notion of {\em polynomial map} between arbitrary groups that showed to be versatile has been studied, with a special attention toward the nilpotent case, in \cite{Lei02}, see \cref{rem:LeiBravo}. In the case we deal with, i.e., the case of maps $f:\mathbb G\to\mathbb R$, Leibman's definition can be generalized for distributions. Let us define the operator $D_g$ acting on distributions $f$ on $\mathbb G$ as follows
	$$
	D_gf:=f\circ R_g-f,
	$$
	where $R_g$ stands for the right translation by $g\in\mathbb G$ and $f\circ R_g$ should be properly defined, see \eqref{eqn:fcircRg}. We say that a distribution $f$ on $\mathbb G$ is {\em polynomial à la Leibman with degree at most $d\in\mathbb N$} if
	\begin{equation}\label{eqn:IntroLeib}
	g_1,\dots,g_{d+1}\in \mathbb G \Rightarrow D_{g_1}\cdots D_{g_{d+1}}f\equiv 0, \quad \text{in the sense of distributions on $\mathbb G$}.
	\end{equation}
	%Let $f:\mathbb G\to \mathbb H$ be a map between two groups $\mathbb G$ and $\mathbb H$, and let $g\in \mathbb G$. We define the operator $D_g$ as follows
	%$$
	%(D_gf)(g'):=f(g')^{-1}f(g'g), \qquad \forall g'\in\mathbb G.
	%$$
	%According to \cite[Section 0.2]{Lei02}, a map $f:\mathbb G\to \mathbb H$ is a {\em polynomial with degree at most $d$}, being $d\in\mathbb N$, if for every $g_1,\dots,g_{d+1}\in \mathbb G$ we have 
	%\begin{equation}\label{eqn:giIntro}
	%D_{g_1}\dots D_{g_{d+1}} f \equiv e_{\mathbb H},
	%\end{equation}
	%where $e_{\mathbb H}$ is the function that is constantly equal to the identity of $\mathbb H$.
	We stress that our main result \cref{thm:Intro1} helps in proving that the latter notion of being polynomial, which is ``discrete'' in spirit, in our setting is equivalent to a ``differential'' one. We say that a distribution $f$ on $\mathbb G$ is {\em polynomial (in the differential sense) with degree at most $d\in\mathbb N$} if
	\begin{equation}\label{eqn:IntroDiff}
	X_1,\dots,X_{d+1}\in \mathfrak g\Rightarrow X_1\cdots X_{d+1}f\equiv 0, \quad \text{in the sense of distributions on $\mathbb G$}.
	\end{equation}
	For some benefit towards the understanding of the next result we recall that, given a Lie algebra $\mathfrak g$, the {\em nilpotent residual} $\mathfrak g_{\infty}$ of $\mathfrak g$ is the intersection $\cap_{k\in\mathbb N} \mathfrak g_k$, where $\mathfrak g:=\mathfrak g_0\supseteq \mathfrak g_1\supseteq \dots$ is the lower central series associated to $\mathfrak g$. Notice that $\mathfrak g\mathrel{/}\mathfrak g_{\infty}$ is the biggest nilpotent quotient of $\mathfrak g$. Let us denote $\mathbb G_{\infty}$ the closure of the unique connected subgroup of $\mathbb G$ with Lie algebra $\mathfrak g_{\infty}$. We call $N_{\mathbb G}:=\mathbb G\mathrel{/}\mathbb G_{\infty}$ the {\em maximal nilpotent Lie quotient}.
	The following statement is a corollary of \cref{thm:Intro1} and \cref{rem:PolyLeib}, see \cref{proof:EquIntro1}. 
	\begin{theorem}\label{prop:Equ1Intro}
		Let $\mathbb G$ be a connected Lie group. %let $S\subseteq \mathfrak g$ be a cone in $\mathfrak g$\footnote{We say that $S\subseteq \mathfrak g$ is a cone if whenever $X\in S$, then $tX\in S$.}, 
		\begin{enumerate}
		\item A distribution on $\mathbb G$ is polynomial à la Leibman with degree at most $d\in\mathbb N$, see \eqref{eqn:IntroLeib}, if and only if it is polynomial (in the differential sense) with degree at most $d$, see \eqref{eqn:IntroDiff}. 
		
		\item Every polynomial distribution is represented by an analytic function.
		
		\item The vector space of distributions that are polynomial with degree at most $d\in\mathbb N$ is finite-dimensional.
		
		\item For every distribution $f$ that is polynomial of degree at most $d$, and for every $X$ in the $d$-th element of the lower central series, we have $Xf\equiv 0$. Consequently, for every analytic function $f$ that is polynomial, there exists a function $\widetilde f:N_{\mathbb G}\to \mathbb R$ that is polynomial such that $\widetilde f\circ\pi=f$, where $\pi$ is the projection onto the maximal nilpotent Lie quotient $N_{\mathbb G}$.
		\end{enumerate}
		\end{theorem}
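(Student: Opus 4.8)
The plan is to deduce everything from \cref{thm:Intro1} together with the algebraic relationship between the difference operators $D_g$ and the left-invariant derivatives recorded in \cref{rem:PolyLeib}. The unifying observation is that both notions of being polynomial of degree at most $d$ force $X^{d+1}f\equiv0$ for every $X\in\mathfrak g$. For the differential notion this is the special case $X_1=\dots=X_{d+1}=X$ of \eqref{eqn:IntroDiff}. For the Leibman notion, one specializes \eqref{eqn:IntroLeib} to $g_1=\dots=g_{d+1}=\exp(tX)$ and uses that $t\mapsto D_{\exp(tX)}f=f\circ R_{\exp(tX)}-f$ is a smooth path of distributions with $\frac{\di}{\di t}(f\circ R_{\exp(tX)})=(Xf)\circ R_{\exp(tX)}$; hence $D_{\exp(tX)}^{d+1}f$ has lowest-order term $t^{d+1}X^{d+1}f$ in $t$, and its vanishing for all $t$ yields $X^{d+1}f\equiv0$. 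In either case $f$ is $\mathfrak g$-polynomial with degree at most $d+1$ and, since $\mathfrak g$ trivially Lie generates $\mathfrak g$, \cref{thm:Intro1} applies: $f$ is represented by an analytic function, which is item (2), and the relevant space is a linear subspace of the finite-dimensional space of $\mathfrak g$-polynomial distributions of degree at most $d+1$, which is item (3).

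For item (1) I would exploit that $X^{d+1}f\equiv0$ makes the exponential series truncate. Writing $R_g^\ast f:=f\circ R_g$, one has $D_g=R_g^\ast-\Id$ and $R_g^\ast R_h^\ast=R_{gh}^\ast$, whence the two key identities $D_{gh}=D_gD_h+D_g+D_h$ and, on analytic functions with $X^{d+1}f\equiv0$, the finite sum $R_{\exp(tX)}^\ast f=\sum_{n=0}^{d}\tfrac{t^n}{n!}X^nf$, i.e.\ $D_{\exp(tX)}=e^{tX}-\Id$ acting as a differential operator of finite order. For the implication ``differential $\Rightarrow$ Leibman'' I would write each $g_i$ as a product of exponentials $\exp(Y_{i,1})\cdots\exp(Y_{i,r_i})$ (possible since $\mathbb G$ is connected) and expand $D_{g_i}$ via $D_{gh}=D_gD_h+D_g+D_h$ into a sum of compositions of operators $D_{\exp(\cdot)}$, each containing at least one factor. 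Then $D_{g_1}\cdots D_{g_{d+1}}$ becomes a sum of compositions $D_{\exp(Z_1)}\cdots D_{\exp(Z_p)}$ with $p\ge d+1$; expanding each $D_{\exp(Z_j)}=e^{Z_j}-\Id$ produces only monomials $Z_1^{k_1}\cdots Z_p^{k_p}$ with every $k_j\ge1$, hence of length $\ge d+1$, and every such monomial annihilates $f$ because \eqref{eqn:IntroDiff} of degree $d$ implies that any product of at least $d+1$ left-invariant fields kills $f$. Conversely, for ``Leibman $\Rightarrow$ differential'' I would set $g_i=\exp(t_iX_i)$, so that $D_{g_1}\cdots D_{g_{d+1}}f=\prod_{i=1}^{d+1}(e^{t_iX_i}-\Id)f$, and read off the coefficient of $t_1\cdots t_{d+1}$, which is exactly $X_1\cdots X_{d+1}f$; its vanishing is \eqref{eqn:IntroDiff}. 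This matches the degrees on both sides.

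For item (4) I would argue in the universal enveloping algebra of $\mathfrak g$. Any $X$ in the $d$-th term $\mathfrak g_d$ of the lower central series is a linear combination of nested brackets $[Y_1,[Y_2,\dots,[Y_d,Y_{d+1}]\dots]]$; expanding each commutator $[A,B]=AB-BA$ rewrites such a bracket as a sum of associative monomials of length exactly $d+1$ in the $Y_j$'s, and as a differential operator $X$ equals this sum. Since $f$ is polynomial of degree at most $d$, every length-$(d+1)$ monomial annihilates $f$, so $Xf\equiv0$. As $\mathfrak g_\infty=\cap_k\mathfrak g_k\subseteq\mathfrak g_d$, in particular $Xf\equiv0$ for all $X\in\mathfrak g_\infty$; thus the analytic function $f$ is right-invariant along every one-parameter subgroup $\exp(tX)$ with $X\in\mathfrak g_\infty$, hence along the connected subgroup they generate and, by continuity, along its closure $\mathbb G_\infty$. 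Because $\mathfrak g_\infty$ is an ideal, $\mathbb G_\infty$ is normal, $N_{\mathbb G}=\mathbb G/\mathbb G_\infty$ is a nilpotent Lie group, and $f$ descends to $\widetilde f\colon N_{\mathbb G}\to\setR$ with $\widetilde f\circ\pi=f$. Finally $\widetilde f$ is again polynomial of degree at most $d$ because $\pi$ is a surjective homomorphism and submersion, so \eqref{eqn:IntroDiff} passes to the quotient.

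I expect the main obstacle to be the rigorous passage between the discrete operators $D_g$ and the differential operators $X$ in the second paragraph, namely justifying $R_{\exp(tX)}^\ast=e^{tX}$ and the attendant series manipulations at the level of distributions before analyticity is available, and organizing the degree bookkeeping when a single $g_i$ is only a product of several exponentials. The truncation afforded by $X^{d+1}f\equiv0$ removes all convergence issues, so that once \cref{thm:Intro1} has upgraded $f$ to an analytic function the remaining computations are the finite, purely algebraic manipulations packaged in \cref{rem:PolyLeib}.
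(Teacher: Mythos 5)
Your proposal is correct, and for the core equivalence in item (1) it takes a genuinely different route from the paper. The paper first proves the smooth case (\cref{rem:PolyLeib}) by induction on $d$: its harder direction (differential $\Rightarrow$ Leibman) rests on stability of the differential class under right translations $R_{\exp(Y)}$ computed via $\mathrm{Ad}_{\exp(-Y)}$, the integral identity $D_{\exp(Y)}\phi(x)=\int_0^1(Y\phi)(x\exp(tY))\,\de t$, closure of the Leibman class under pointwise limits, and an analytic-continuation step to pass from $g_{d+1}\in\exp\mathfrak g$ to arbitrary $g_{d+1}$; distributions are then handled by mollification, using \eqref{eqn:DgConvolution} for one direction and \cref{thm:Intro1} for the other. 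You instead secure analyticity at the outset in both directions---for the Leibman side by the diagonal finite-difference argument showing that $D^{d+1}_{\exp(tX)}f$ has leading term $t^{d+1}X^{d+1}f$, so $X^{d+1}f\equiv 0$ for every $X$ and \cref{thm:Intro1} applies with $S=\mathfrak g$---and then argue purely algebraically on the analytic representative: the cocycle identity $D_{gh}=D_gD_h+D_g+D_h$ (equivalently $R^{*}_{gh}=R^{*}_{g}R^{*}_{h}$), connectedness to factor each $g_i$ into exponentials, and the truncated Taylor expansion \eqref{eqn:taylor}, legitimate at every stage because any product of at least $d+1$ left-invariant fields kills $f$, so each $D_{\exp(Z)}$ acts as a finite-order differential operator on every monomial image of $f$; the converse is a coefficient extraction in $t_1\cdots t_{d+1}$. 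This trades the paper's induction, integral formula, pointwise-limit closure, and continuation step for finite algebra; the cost is exactly the point you flag yourself, namely justifying that $t\mapsto f\circ R_{\exp(tX)}$ is a smooth distribution-valued path with $\frac{\de^j}{\de t^j}_{|_{t=0}}\langle f\circ R_{\exp(tX)},\phi\rangle=\langle X^jf,\phi\rangle$, which is standard (test against $\phi$ and differentiate $t\mapsto\Delta(\exp(-tX))\,\phi\circ R_{\exp(-tX)}$ in the topology of $\mathcal{D}(\mathbb G)$), whereas the paper's convolution argument avoids any path-regularity discussion. Items (2) and (3) are obtained exactly as in the paper from \cref{thm:Intro1} (the degree bound $d+1$ being harmless for finite-dimensionality), and your item (4), rewriting right-nested brackets of length $d+1$ as sums of associative monomials of length $d+1$ and then descending to $\mathbb G\mathrel{/}\mathbb G_{\infty}$, is the paper's argument in all but notation.
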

	
	Thus, with the previous result jointly with \cref{thm:Intro1}, it becomes clear that a continuous polynomial map $f:\mathbb G\to\mathbb R$ à la Leibman is analytic and in particular it is $S$-polynomial, no matter what is the choice of a Lie generating $S$. Instead, the converse may not be true in arbitrary Lie groups. The counterexample is already found in the two-dimensional group of the orientation-preserving affine functions of $\mathbb R$, called $\mathrm{Aff}^+(\mathbb R)$. With the classical choice of coordinates on $\mathrm{Aff}^+(\mathbb R)$ one can find a Lie generating $S$ such that an $S$-polynomial function that is not polynomial is $f(x,y)=(x+1)\log y$, see \eqref{ex:PositiveAffine}. 
	
	The last part of the statement of \cref{prop:Equ1Intro} tells us that polynomial maps always factor via a nilpotent group. Hence, we shall only study polynomial maps on nilpotent Lie groups. When $\mathbb G$ is a connected nilpotent Lie group $\exp:\mathfrak g\to\mathbb G$ is an analytic and surjective map, and thus one could also give another definition of ``polynomial'', namely a map $f:\mathbb G\to\mathbb R$ is {\em polynomial in exponential chart} if and only if $f\circ\exp:\mathfrak g\to\mathbb R$ is a polynomial. In case $\mathbb G$ is a connected and nilpotent Lie group, we show that the property of being $S$-polynomial propagates to the entire Lie algebra. Namely, we prove that a $S$-polynomial distribution on $\mathbb G$ is represented by a polynomial in exponential chart, and thus in particular it is $\mathfrak g$-polynomial of some degree $k\in\mathbb N$, see \cref{rem:Simpliesg}. Our second main result now can be stated as follows, see \cref{proofTHMIntro1} for the proof of the following statement.
	\begin{theorem}\label{thm:Intro2}
		Let $\mathbb G$ be a connected nilpotent Lie group, let $f$ be a distribution on $\mathbb G$, and let $S$ be a Lie generating subset of $\mathfrak g$. If $f$ is $S$-polynomial, then it is represented by a function that is polynomial in exponential chart.
	\end{theorem}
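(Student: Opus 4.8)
The plan is to deduce the statement from \cref{thm:Intro1} by realizing $f$ as a \emph{matrix coefficient} of a finite-dimensional representation of $\mathbb G$, and then using $S$-polynomiality to kill all nonzero weights of that representation. Since $\mathbb G$ is connected and nilpotent, $\exp\colon\mathfrak g\to\mathbb G$ is analytic and surjective, so $F:=f\circ\exp$ is a well-defined analytic function on $\mathfrak g$ (analyticity of $f$ being provided by \cref{thm:Intro1}), and the claim is exactly that $F$ is a polynomial.

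First I would make two reductions. As $\mathfrak g$ is finite-dimensional I can choose a finite subset $S'\subseteq S$ that still Lie generates $\mathfrak g$; for each $X\in S'$ pick $k_X$ with $X^{k_X}f\equiv0$ and set $k:=\max_{X\in S'}k_X$, so that $X^kf\equiv0$ for every $X\in S'$. By \cref{thm:Intro1} the space $V$ of distributions $h$ with $X^kh\equiv0$ for all $X\in S'$ is finite-dimensional, consists of analytic functions, and contains $f$. The key structural observation is that $V$ is invariant under left translations: left-invariant vector fields commute with left translations, i.e.\ $X^k(h\circ L_g)=(X^kh)\circ L_g$, so $h\in V$ forces $h\circ L_g\in V$. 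Hence $W:=\mathrm{span}\{f\circ L_g:g\in\mathbb G\}$ is a finite-dimensional subspace of $V$ containing $f$.

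Next I would set up the representation $\pi(g)h:=h\circ L_{g^{-1}}$, a finite-dimensional representation of $\mathbb G$ on $W$. With $\phi:=\mathrm{ev}_e\in W^*$ (evaluation at the identity) one has the matrix-coefficient identity $f(g)=\phi(\pi(g^{-1})f)$, hence $F(v)=\phi\big(\exp(-\mathrm d\pi(v))f\big)$, where $\mathrm d\pi\colon\mathfrak g\to\mathrm{End}(W)$ is the differential. The point of this reformulation is that the differential hypothesis becomes algebraic: differentiating $t\mapsto f(g\exp(tX))=\phi\big(\exp(-t\,\mathrm d\pi(X))\pi(g^{-1})f\big)$ at $t=0$ shows that $X^kf\equiv0$ is equivalent to $\phi\big(\mathrm d\pi(X)^k\pi(g^{-1})f\big)=0$ for all $g$, and since $\{\pi(g^{-1})f\}_{g}$ spans $W$ this reads $\mathrm d\pi(X)^kW\subseteq\ker\phi$ for every $X\in S'$.

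The heart of the argument, and the step I expect to be the main obstacle, is to show that only the zero weight survives; the difficulty is precisely that the conditions defining $V$ are in terms of left-invariant fields while $V$ is invariant only under left translations, so a naive Engel-type argument does not apply directly, and the matrix-coefficient device is what circumvents this. Passing to $W_{\mathbb C}$ I would use the generalized weight-space decomposition $W_{\mathbb C}=\bigoplus_\lambda W_\lambda$ for the nilpotent Lie algebra $\mathfrak g$, in which each weight $\lambda\in\mathfrak g^*$ is a character vanishing on $[\mathfrak g,\mathfrak g]$ and $\mathrm d\pi(v)|_{W_\lambda}=\lambda(v)\mathrm{Id}+N_v$ with $N_v$ nilpotent. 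Given a nonzero weight $\lambda$, since $S'$ Lie generates $\mathfrak g$ its image spans $\mathfrak g/[\mathfrak g,\mathfrak g]$, so there is $X_0\in S'$ with $\lambda(X_0)\neq0$; then $\mathrm d\pi(X_0)^k$ is invertible on $W_\lambda$, whence $W_\lambda=\mathrm d\pi(X_0)^kW_\lambda\subseteq\ker\phi$, i.e.\ $\phi|_{W_\lambda}=0$. This is exactly where bracket generation and nilpotency enter, and where the argument must break down for non-nilpotent groups such as $\mathrm{Aff}^+(\mathbb R)$. Finally, decomposing $f=\sum_\lambda f_\lambda$ and using that each $W_\lambda$ is $\mathrm d\pi(\mathfrak g)$-invariant, I get $F(v)=\sum_\lambda\phi\big(\exp(-\mathrm d\pi(v))f_\lambda\big)=\sum_\lambda e^{-\lambda(v)}\phi\big(\exp(-N_v)f_\lambda\big)$, where for $\lambda\neq0$ the term vanishes because $\phi$ kills $W_\lambda\ni\exp(-N_v)f_\lambda$, while the $\lambda=0$ term is polynomial in $v$ because $\mathrm d\pi(v)$ is nilpotent on $W_0$. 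Hence $F=f\circ\exp$ is a polynomial, as desired.
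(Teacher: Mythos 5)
Your proof is correct, and it takes a genuinely different route from the paper's. The paper proves \cref{thm:Intro2} by passing to the universal cover, lifting $f$ to a free-nilpotent (hence Carnot) group via a surjective homomorphism $\mathfrak f_{m,s}\to\mathfrak g$, and then, in the Carnot case (\cref{thm:MAINCARNOT}), combining the analyticity given by \cref{thm:Intro1} with a dilation/blow-up argument (each homogeneous term of the Taylor expansion of $f$ at $e$ is again $k$-polynomial with respect to $S$) and with a polynomial-growth bound along concatenations of horizontal flow lines (\cref{lem:EST1BIS} together with \cref{lem:Connecting}); the growth bound forces the Taylor series to terminate. You instead exploit the finite-dimensionality statement of \cref{thm:Intro1} representation-theoretically: the space $V$ of $S'$-polynomial distributions of degree at most $k$ is finite-dimensional and left-translation invariant (left-invariant vector fields commute with left translations), so the span $W$ of the left translates of $f$ carries a finite-dimensional representation $\pi$ of $\mathbb G$, $f$ is a matrix coefficient of $\pi$, and the differential hypothesis becomes the algebraic statement $\mathrm{d}\pi(X)^kW\subseteq\ker\phi$ for $X\in S'$. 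The generalized weight-space (primary) decomposition of $W_{\mathbb C}$ under the nilpotent algebra $\mathrm{d}\pi(\mathfrak g)$, combined with the fact that a Lie generating set spans $\mathfrak g$ modulo $[\mathfrak g,\mathfrak g]$ while every weight vanishes on $[\mathfrak g,\mathfrak g]$, kills all nonzero weights; on the zero weight space $\mathrm{d}\pi(v)$ is nilpotent, so $F(v)=\phi(\exp(-\mathrm{d}\pi(v))f_0)$ is a polynomial in $v$. What the paper's route buys is an explicit, uniform degree bound $\nu(k,s,2n)$ and a self-contained argument; what your route buys is brevity and structural clarity: it works directly on any connected nilpotent group (no universal cover, no free-nilpotent lift, no Carnot machinery or dilations), it still yields a degree bound (namely $\deg F<\dim W_0\leq\dim V$, controlled by \cref{prop:FiniteDim} in terms of $k$ and $n$), and it isolates exactly where nilpotency enters---the existence of the weight decomposition---which is precisely what fails for $\mathrm{Aff}^+(\mathbb R)$.

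Two points should be nailed down to make this complete; both are standard. First, to define $\mathrm{d}\pi$ and to use $\pi\circ\exp_{\mathbb G}=\exp_{\mathrm{GL}(W)}\circ\,\mathrm{d}\pi$ you need $\pi$ to be a smooth homomorphism: since $W$ is a finite-dimensional space of analytic functions, choose $p_1,\dots,p_N\in\mathbb G$ so that $h\mapsto(h(p_1),\dots,h(p_N))$ is injective on $W$; the matrix entries of $\pi(g)$ in a basis are then obtained from the quantities $h_i(g^{-1}p_j)$ by inverting a fixed matrix, hence depend analytically on $g$, and a continuous homomorphism of Lie groups is automatically analytic. Second, the generalized weight-space decomposition for a finite-dimensional complex representation of a nilpotent Lie algebra---with weights that are linear functionals vanishing on the derived algebra and with each weight space invariant under the whole algebra---is a theorem that deserves a citation (e.g.\ Bourbaki, \emph{Lie Groups and Lie Algebras}, Ch.~VII, \S 1, or Jacobson, \emph{Lie Algebras}, Ch.~II); it is exactly this step that breaks down for merely solvable algebras, consistently with the counterexample on $\mathrm{Aff}^+(\mathbb R)$ in \cref{sec:examples}.
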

	We stress that if $\mathbb G$ is not nilpotent, being $S$-polynomial for a Lie generating $S$ may not imply being $\mathfrak g$-polynomial, or even being polynomial in exponential chart, see \eqref{ex:PositiveAffine} for such a counterexample in $\mathrm{Aff}(\mathbb R)^+$. For non-nilpotent groups we do not know either if being $\mathfrak g$-polynomial implies being polynomial, or even if being $\mathfrak g$-polynomial passes to the maximal nilpotent Lie quotient.
	
	With the previous main result we can prove that on a connected nilpotent Lie group $\mathbb G$ the different notions of being polynomial that we discussed above are equivalent and in particular they are equivalent to being $S$-polynomial for any Lie generating $S$. The following statement is a corollary of \cref{thm:Intro2}, \cref{prop:Equ1Intro}, and \cref{prop:POLYKPOLY}, see \cref{proof:Equ2Intro}.
	\begin{corollary}\label{prop:Equ2Intro}
		Let $\mathbb G$ be a connected nilpotent Lie group, and let $f$ be a distribution on $\mathbb G$. Then the following are equivalent 
		\begin{enumerate}
			\item $f$ is an $S$-polynomial distribution for some Lie generating $S\subseteq \mathfrak g$,
			\item $f$ is an $S$-polynomial distribution for all $S\subseteq \mathfrak g$,
			\item $f$ is represented by a function that is polynomial in exponential chart,
			\item $f$ is a polynomial distribution (in the differential sense), see \eqref{eqn:IntroDiff}.
			\item $f$ is a polynomial distribution à la Leibman, see \eqref{eqn:IntroLeib}.
		\end{enumerate}
	\end{corollary}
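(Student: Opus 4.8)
The plan is to prove the five conditions equivalent by establishing the cycle $(1)\Rightarrow(3)\Rightarrow(4)\Rightarrow(2)\Rightarrow(1)$ together with the side equivalence $(4)\Leftrightarrow(5)$, so that all of them collapse into a single equivalence class. I want each arrow to be either a direct invocation of one of the three results cited in the statement or a short elementary observation: the genuine analytic content (passing from a ``horizontal'' polynomiality to polynomiality in exponential chart) has already been absorbed into \cref{thm:Intro2}, so here the work is purely organizational.

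First, $(1)\Rightarrow(3)$ is exactly \cref{thm:Intro2}: on the connected nilpotent group $\mathbb G$, if $f$ is $S$-polynomial for \emph{some} Lie generating $S$, then $f$ is represented by a function that is polynomial in exponential chart. Next, $(3)\Rightarrow(4)$: I would invoke \cref{prop:POLYKPOLY} to convert the statement that $f\circ\exp$ is a polynomial on $\mathfrak g$ into the differential vanishing $X_1\cdots X_{d+1}f\equiv 0$ for all $X_1,\dots,X_{d+1}\in\mathfrak g$, for a suitable degree $d$ depending on $\mathbb G$ and on the degree of $f\circ\exp$. Finally $(4)\Leftrightarrow(5)$ is precisely \cref{prop:Equ1Intro}(1), which identifies the differential notion \eqref{eqn:IntroDiff} with the Leibman notion \eqref{eqn:IntroLeib} on every connected Lie group.

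It remains to close the loop with $(4)\Rightarrow(2)$ and $(2)\Rightarrow(1)$, both elementary. For $(4)\Rightarrow(2)$, assume $X_1\cdots X_{d+1}f\equiv 0$ for all choices of $X_i\in\mathfrak g$; specializing $X_1=\cdots=X_{d+1}=X$ gives $X^{d+1}f\equiv 0$ for \emph{every} $X\in\mathfrak g$. Hence for any subset $S\subseteq\mathfrak g$ and any $X\in S$ the exponent $k=d+1$ works, so $f$ is $S$-polynomial for all $S\subseteq\mathfrak g$. For $(2)\Rightarrow(1)$ it suffices to note that $\mathfrak g$ is itself Lie generating (the only subalgebra containing $\mathfrak g$ is $\mathfrak g$), so being $S$-polynomial for all $S$ yields in particular an $S$-polynomial structure for the Lie generating choice $S=\mathfrak g$, establishing $(1)$.

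I do not expect a serious obstacle inside this assembly: the substance is entirely carried by the earlier results, with \cref{thm:Intro2} being the deep ingredient. The only point demanding care is the degree bookkeeping in $(3)\Rightarrow(4)$, namely checking that \cref{prop:POLYKPOLY} genuinely matches the polynomial degree of $f\circ\exp$ with the order $d+1$ of left-invariant derivatives annihilating $f$ — a matching that is nontrivial because, through the Baker--Campbell--Hausdorff formula, composing $f\circ\exp$ with right translation can raise the apparent degree by a factor controlled by the nilpotency step of $\mathbb G$. Once the degrees are correctly aligned via \cref{prop:POLYKPOLY}, the cycle is complete and all five conditions are equivalent.
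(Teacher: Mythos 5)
Your proof is correct and follows essentially the same route as the paper's: the paper likewise establishes (1)$\Rightarrow$(3) via \cref{thm:Intro2}, (3)$\Rightarrow$(4) via \cref{prop:POLYKPOLY}, (4)$\Leftrightarrow$(5) via \cref{prop:Equ1Intro}, and dismisses (4)$\Rightarrow$(2) and (2)$\Rightarrow$(1) as immediate from the definitions, exactly as you spell them out. Your closing worry about ``degree bookkeeping'' in (3)$\Rightarrow$(4) is unnecessary at this level: \cref{prop:POLYKPOLY} only asserts the existence of \emph{some} order $k_0$ of vanishing, which is all that condition (4) demands, so no matching of degrees is required in the corollary itself.
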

	\vspace{0.3cm} 
	
	A characterization of polynomial functions on stratified groups has been provided also in \cite{BLU07}, and we stress that our result is stronger than this characterization in \cite[Corollary 20.1.10]{BLU07}, see \cref{rem:BLUeKP}. We also notice that a characterization of harmonic polynomials in stratified groups through Almgren's frequency function has been provided in \cite[Theorem 9.1]{GR15}.
	
	Let us discuss the strategy of the proofs of \cref{thm:Intro1} and \cref{thm:Intro2}. The starting idea is to prove the results first when $f$ is smooth, and then we recover the general statements for distributions by using convolutions with smoothing kernels. The latter strategy can be performed since smooth functions that are $S$-polynomials with degree at most $k\in\mathbb N$ form a finite-dimensional vector space, see \cref{prop:FiniteDim}.
	
	The idea for proving the statements when $f$ is smooth is to somehow propagate the information about being $S$-polynomial to the directions not in $S$. Let us give here, in a particular case, a hint of how we do this. Let $f$ be smooth and polynomial with degree at most $2$ with respect to $X,Y\in\mathfrak g$, and let us denote with $e$ the identity element of $\mathbb G$. We have the following equalities for $t,s\in\mathbb R$,
	\begin{equation}\label{eqn:ReprIntro}
	\begin{split}
	f(\exp(tX)\exp(sY))&=f(\exp(tX))+s(Yf)(\exp(tX))\\ 
	&=f(e)+t(Xf)(e)+s\frac{\de}{\de\varepsilon}_{|_{\varepsilon=0}}f(\underbrace{\exp(tX)\exp(\varepsilon Y)\exp(-tX)}_{p_\varepsilon}\exp(tX)) \\
	&=f(e)+t(Xf)(e)+s\frac{\de}{\de\varepsilon}_{|_{\varepsilon=0}}\left(f(p_{\varepsilon})+t(Xf)(p_{\varepsilon})\right) \\
	&=f(e)+t(Xf)(e)+s(\mathrm{Ad}_{\exp(tX)}Y)f(e)+st(\mathrm{Ad}_{\exp(tX)}YX)f(e),
	\end{split}
	\end{equation}
	where in the first, second and third equalities we are using that $f$ is 2-polynomial with respect to $Y$ and $X$, respectively, and in the last equality we are using the definition of the adjoint, see \cref{sec:Conv}.
	In the general case, the proper generalization of the latter representation formula, see \eqref{eqn:YEAHBISAdapted}, allows to conclude the following statement: given $\ell\in\mathbb N$, the function $f$ along the concatenation of $\ell$ horizontal curves $\exp(t_1Y_1)\cdots\exp(t_\ell Y_\ell)$, with $Y_1,\dots,Y_\ell\in S$, is analytic in $(t_1,\dots,t_\ell)$ and depends only on the value of the jet of order $\delta_1$ of $f$ at the identity, where $\delta_1$ depends only on $\ell$ and the order of polynomiality of $f$ along $Y_1,\dots,Y_\ell$. The latter observation is enough to conclude both the fact that $f$ is analytic and the finite-dimensional result of \cref{thm:Intro1}. To this aim, it is important that in every polarized Lie group there exists a chart, around every point $p$, that can be written as the concatenation of a fixed number (twice the dimension of the group) of flow lines, starting from $p$, of horizontal vector fields, see \cref{lem:ConnectionBIS}.
	
	In the particular case in which $\mathbb G$ is nilpotent, the representation formula \eqref{eqn:ReprIntro} gives an additional piece of information because $\mathrm{Ad}_{\exp(tX)}$ is a ``polynomial in $t$'' sum of operators, see \eqref{eqn:AdjointExponential}. Thus, in case $\mathbb G$ is nilpotent, one concludes, from the proper generalization of \eqref{eqn:ReprIntro}, see \cref{lem:EST1BIS}, the following statement: given $\ell\in\mathbb N$, the function $f$ along the concatenation of $\ell$ horizontal curves $\exp(t_1Y_1)\cdots\exp(t_\ell Y_\ell)$, with $Y_1,\dots,Y_\ell\in S$, is a polynomial in $(t_1,\dots,t_\ell)$ with degree at most $\delta_2$, where $\delta_2$ depends only on $\ell$ and the order of polynomiality of $f$ along $Y_1,\dots,Y_\ell$, and where the coefficients of the polynomial only depend on some mixed derivatives of $f$ of bounded order at the identity.
	
	In order to prove \cref{thm:Intro2}, one first reduces to the case when $\mathbb G$ is simply connected by passing to the universal cover. Then, the first idea is to lift the problem to free-nilpotent groups, see the proof of \cref{thm:MAINGENERAL}. Free-nilpotent groups are stratified, and stratified Lie groups are also called Carnot groups. We then notice that it is sufficient to prove \cref{thm:Intro2} in the case $\mathbb G$ is a Carnot group. Now, if $\mathbb G$ is a Carnot group, we exploit that $f$ is analytic, which we have previously obtained in the setting of arbitrary Lie groups, and a blow-up argument, which we can perform since $\mathbb G$ has a homogeneous structure, to obtain that each term in the homogeneous Taylor expansion of $f$ at the identity is $S'$-polynomial of a fixed order $k\in\mathbb N$, where $S'\subseteq S$ is Lie generating, see the proof of \cref{thm:MAINCARNOT}. Now the observation above according to which every $S'$-polynomial of order $k\in\mathbb N$ is a polynomial of bounded degree along $\exp(t_1Y_1)\cdots\exp(t_\ell Y_\ell)$ allows us to conclude that every $S'$-polynomial of order $k\in\mathbb N$ has a polynomial growth order of bounded degree at infinity: this is done by using \cref{lem:Connecting} according to which we can bound below, up to a constant, the distance of a point $p=\exp(t_1Y_1)\cdots\exp(t_\ell Y_\ell)$ from the identity with $|t_1|+\dots+|t_\ell|$. Thus the homogeneous Taylor expansion of $f$ at the identity cannot have terms of arbitrarily large order, and then $f$ is a polynomial, concluding the proof.
	\vspace{0.3cm}
	
	Let us finally discuss our initial motivation for studying such a problem. If $\mathbb G$ is a Carnot group of step $s$ with stratification $\mathfrak g=V_1\oplus\dots\oplus V_s$, we take $S=V_1$ and we take a function $f$ such that for every $X\in V_1$ we have $X^2f\equiv 0$, we recover the notion of {\em horizontally affine maps} on a Carnot group, which has recently been introduced and studied in \cite{LDMR20}. The main result of \cite{LDMR20}, i.e., \cite[Theorem 1.1]{LDMR20}, is a complete characterization of horizontally affine maps on step-2 Carnot groups. Our main result \cref{thm:Intro2} can be seen as a broad generalization of the part of the statement in \cite[Theorem 1.1]{LDMR20} according to which every horizontally affine map on a step-2 Carnot group is ultimately polynomial in exponential chart. Indeed, our \cref{thm:Intro2} holds for arbitrary connected nilpotent Lie groups, an arbitrary degree of polynomiality, and even if we ask the polynomial property with respect to a finite set $S$ that Lie generates, which is not necessarily a vector subspace of $\mathfrak g$.
	
	The interest toward horizontally affine maps on Carnot groups is motivated by the fact that they are linked to precisely monotone sets, which have been first introduced and studied in \cite{CK10}, and \cite{CKN11}, in order to show that the first Heisenberg group $\mathbb H^1$ with the subRiemannian distance does not biLipschitz embed into $L^1(\mathbb R,\mathscr{L}^1)$. A {\em precisely monotone} set $E$ in a Carnot group $\mathbb G$ is a subset of $\mathbb G$ such that $E$ and the complement $E^c$ are {\em $h$-convex}, see \cite[Definition 3.1]{Ric06}. A set $E$ is {\em $h$-convex} in $\mathbb G$ if whenever $a,b\in E$ and there exists $\gamma$ an integral curve of a horizontal left-invariant vector field with extrema $a,b$, then $\gamma$ is contained in $E$. It is simple to observe, from the very definition, that the sublevel sets of a horizontally affine map on a Carnot group are precisely monotone sets. 
	
	The problem of classifying precisely monotone sets is rather difficult already in the easiest Carnot groups. The precisely monotone sets have been completely classified in the Heisenberg groups $\mathbb H^n$, see \cite[Theorem 4.3]{CK10}, and \cite[Proposition 65]{NY18}, and in $\mathbb H^1\times\mathbb R$, see \cite[Theorem 1.2]{Mor18}. In all the latter three cases one can prove that if $E\notin\{\emptyset,\mathbb G\}$ is precisely monotone,
	then $\partial E$ is a hyperplane in exponential chart and $\mathrm{Int}(E)$ and $\overline E$ are one of the open, respectively closed, half-spaces bounded by $\partial E$. On the contrary, from the results in \cite{LDMR20}, it follows that, already in the step-2 case, there are sublevel sets of horizontally affine maps, and thus precisely monotone sets, that are not half-spaces in exponential chart. The results in \cite[Theorem 1.1]{LDMR20} show  that in arbitrary Carnot groups there are plenty of horizontally affine maps that are not affine, and we could construct plenty of precisely monotone sets that are not half-spaces in exponential chart. On the other hand, since we proved in particular that in arbitrary Carnot groups horizontally affine maps are polynomial in exponential chart, the precisely monotone sets constructed as sublevel sets of horizontally affine maps result in being semialgebraic sets in exponential chart. Also we stress that in some Carnot groups, e.g., the free Carnot group of step 3 and rank 2, there are precisely monotone sets whose boundary is not an algebraic variety in exponential chart, see \cite[Theorem 6.2]{BLD19}. As a consequence, in general it is even not true that the boundary of a precisely monotone set in a Carnot group is a zero-level set of a horizontally affine function.
	%\footnote{GA: {\color{red} Complete the literature? Add more on $h$-affine functions and precisely monotone sets? Add other things about polynomial on Carnot groups? Point out that the motivation in \cite{BLU07} to study polynomials was the study of Taylor formulas with approximating functions being polynomial? Add results of Garofalo, Krantz about PDE charachterizations of polynomials? Mention some applications of the definition of Leibman? (Green, Tao and so on...)}}
	\vspace{0.3cm}
	
	The structure of the paper is as follows.
	
	In \cref{sec:Prel} we recall some basic facts about Lie groups. In particular, in \cref{sec:Conv} we discuss the convolution in arbitrary Lie groups. In \cref{sec:Distr} we recall basic facts about distributions on arbitrary Lie groups. In \cref{sec:NilpAndCarnot} we fix the notation on nilpotent and stratified, also called Carnot, groups. In \cref{sec:3} we prove \cref{thm:Intro1}. In particular, in \cref{sec:3.1} we introduce the definition of $S$-polynomial distributions and polynomial distributions in arbitrary Lie groups and we study how $S$-polynomiality behaves under pointwise convergence. In \cref{sec:3.2} we prove \cref{lem:Representation} that will allow to prove the representation formula in  \cref{lem:EST1BISAdapted}. In \cref{sec:3.3} we conclude the proof of \cref{thm:Intro1}. In \cref{sec:4} we prove \cref{thm:Intro2}. In particular, in \cref{sec:4.1} we reduce to the case of Carnot groups and then we conclude by exploiting \cref{lem:EST1BIS}, a consequence of \cref{lem:Representation}. In \cref{sec:App} we give the proof of \cref{prop:Equ1Intro} and of \cref{prop:Equ2Intro}. Finally, in \cref{sec:examples}, we discuss some examples.
 	\section{Preliminaries}\label{sec:Prel}
 	
 	\subsection{Adjoint and convolutions on Lie groups}\label{sec:Conv}
 	We recall here the definition of the adjoint map on an arbitrary Lie group, mostly to fix notation and have formulas ready for later.
 	\begin{definition}[Conjugate and adjoint]
 		Let $\mathbb G$ be a Lie group with identity $e$. Let us fix $g\in\mathbb G$ and define the {\em conjugate function} $C_g:\mathbb G\to \mathbb G$ as $C_g(h):=ghg^{-1}$,
 		and the {\em adjoint operator} $\mathrm{Ad}_g:\mathfrak g\to \mathfrak g$ as
 		$
 		\mathrm{Ad}_g(X):=(\de (C_g)_e)(X).
 		$
 	\end{definition}
 	\begin{remark}[Formula for the adjoint]\label{rem:Adjoint}
 		In an arbitrary Lie group $\mathbb G$ the following formula holds
 		\begin{equation}\label{eqn:AdjointExponential}
 		\mathrm{Ad}_{\exp(X)}(Y)=\sum_{j=0}^{+\infty}\frac{1}{j!}(\mathrm{ad}_X)^jY,
 		\end{equation}
 		where the operator $\mathrm{ad}_X:\mathfrak g\to \mathfrak g$ is defined as $\mathrm{ad}_X(Y):=[X,Y]$, see \cite[Equation (3) page 12]{CG90}. Notice that if $\mathbb G$ is nilpotent then \eqref{eqn:AdjointExponential} is a finite sum. 
 	\end{remark}
 	
 	We recall the notion of {\em modular function} on a Lie group $\mathbb G$. We follow closely the presentation in \cite[Chapter 2]{Fol95}. We fix a {\em left-Haar measure} $\mu$ on $\mathbb G$.
 	%, i.e., a nonzero Radon measure such that $\mu(xE)=\mu(E)$ for every Borel subset $E\subseteq\mathbb G$, see \cite[Theorem 2.10]{Fol95}. 
 	Recall that every two left-invariant Haar measures are one a constant multiple of the other, see \cite[Theorem 2.20]{Fol95}. For every $x\in\mathbb G$ the measure defined by the equality $\mu_x:=(R_{x^{-1}})_*\mu$, where $R_g$ stands for the right translation by $g$, is a left-invariant Haar measure. Thus there exists $\Delta(x)>0$ such that $\mu_x=\Delta(x)\mu$. The function $x\mapsto \Delta(x)$ is called {\em modular function} and it enjoys the following properties
 	\begin{itemize}
 		\item $\Delta:\mathbb G\to\mathbb R$ is an analytic function and
 		\begin{equation}\label{ModularSmooth}
 		\Delta(x)=\det\mathrm{Ad}_x,
 		\end{equation}
 		for every $x\in\mathbb G$, see \cite[Proposition 2.24 and Proposition 2.30]{Fol95};
 		\item we have
 		\begin{equation}\label{ChangeVariables}
 		%\de\mu(yx)=\Delta(x)\de\mu(y), \quad \text{for every $x\in\mathbb G$ and} \quad
 		(\mathrm{inv})_*\mu=(\Delta\circ\mathrm{inv})\mu,
 		\end{equation}
 		where $\mathrm{inv}$ stands for the inverse function on $\mathbb G$, see \cite[Equation (2.32)]{Fol95}.
 	\end{itemize}
 	A Lie group is {\em unimodular} if $\Delta\equiv 1$, i.e., if $\mu$ is also a right-invariant Haar measure. Notice that every connected nilpotent Lie group is unimodular, due to \eqref{ModularSmooth}. We introduce now the convolution between two functions.
 	%We recall that a nilpotent Lie group is a unimodular Lie group, i.e., a group on which there exists one measure that is both a left Haar measure both a right Haar measure. For a reference about this latter statement see \cite[Proposition 1.2]{FS82}. Let us call $\mu$ a Haar measure that is both a left-Haar measure and a right-Haar measure on $\mathbb G$.
 	\begin{definition}[Convolution]\label{def:Convolution}
 		Let $\mu$ be a left-invariant Haar measure on a Lie group $\mathbb G$. Let $f,g\in L^1(\mathbb G,\mu)$. 
 		The {\em convolution} of $f,g:\mathbb G\to\mathbb R$ is defined as
 		\begin{equation}\label{eqn:ConvFuncFunc}
 		f\ast g(x):=\int_{\mathbb G} f(y)g(y^{-1}x)\de\mu(y), \qquad \forall x\in\mathbb G.
 		\end{equation}
 		The definition is well-posed since an application of Fubini theorem implies that the integral is absolutely convergent for every $x\in\mathbb G$, see \cite[page 50]{Fol95}.
 		%Moreover if $f\in\mathcal{D}(\mathbb G)$ and $\Phi\in\mathcal{D}'(\mathbb G)$ we define 
 		%\begin{equation}\label{eqn:ConvFuncDistr}
 		%f\ast \Phi(x):=\langle \Phi,R_{x^{-1}}(\widetilde{f})\rangle, \quad \forall x\in\mathbb G,
 		%\end{equation}
 		%where $R_{x^{-1}}$ is the right translation with $x^{-1}$ and $\widetilde{f}(g):=f(g^{-1})$ for all $g\in\mathbb G$. We stress that, by a simple check, the definition in \eqref{eqn:ConvFuncDistr} is coeherent with \eqref{eqn:ConvFuncFunc}. Notice that if $f\in\mathcal{D}(\mathbb G)$ and $\Phi\in\mathcal{D}'(\mathbb G)$ we get that $f\ast\Phi\in C^{\infty}(\mathbb G)$, see \cite[Chapter 3, Theorem 2.3]{Ric}.
 	\end{definition}
 	It is readily seen by the very \cref{def:Convolution} that when $g$ is continuous with compact support, the convolution $f\ast g$ is continuous. Moreover, if $g\in C^{\infty}_c(\mathbb G)$ we get that $X(f\ast g)=f\ast Xg$ for every left-invariant vector field $X$. Thus if $g\in C^{\infty}_c(\mathbb G)$ we conclude that $f\ast g\in C^{\infty}(\mathbb G)$. In addition one has that $\mathrm{supp}(f\ast g)\subseteq \overline{\mathrm{supp}(f)\cdot \mathrm{supp}(g)}$, and then if $f$ and $g$ have compact support, then also $f\ast g$ has compact support. 
 	
 	The following change of variable formula holds, taking into account \cite[Equation (2.36)]{Fol95},
 	\begin{equation}\label{eqn:TrueConv}
 	f\ast g(x):=\int_{\mathbb G} f(y)g(y^{-1}x)\de\mu(y)=\int_{\mathbb G} f(xy^{-1})g(y)\Delta(y^{-1})\de\mu(y),
 	\end{equation}
 	whenever $f,g\in L^1(\mathbb G,\mu)$ and $x\in\mathbb G$, since \eqref{ChangeVariables} holds. Let us introduce a variant of the convolution $\ast$ introduced before. Given $f,g\in L^1(\mathbb G,\mu)$, we define 
 	\begin{equation}\label{eqn:OtherConvolution}
 	f\,\widetilde{\ast}\, g(x):=\int_{\mathbb G} f(y)g(yx)\de\mu(y)=\int_{\mathbb G}f(yx^{-1})g(y)\Delta(x^{-1})\de\mu(y),
 	\end{equation}
 	where the first integral is absolutely convergent since $f,g\in L^1(\mathbb G,\mu)$ and the second formula holds taking $(R_{x})_*\mu=\Delta(x^{-1})\mu$ into account. It is readily seen that, when $f$ and $g$ have compact support, then $f\,\widetilde{\ast}\, g$ has compact support and if $g\in C^{\infty}_c(\mathbb G)$ we can write $X(f\,\widetilde{\ast}\, g)=f\,\widetilde{\ast}\, Xg$ whenever $X$ is a left-invariant vector field. As a consequence if $g\in C^\infty_c(\mathbb G)$ then $f\,\widetilde{\ast}\, g\in C^\infty(\mathbb G)$. Let us notice the following fact, which comes from \eqref{ChangeVariables}, $(R_{x^{-1}})_*\mu=\Delta(x)\mu$, \eqref{eqn:TrueConv} and \eqref{eqn:OtherConvolution}
 	\begin{equation}\label{eqn:astcheck}
 	\text{$f\,\widetilde{\ast}\, g =\check{(f\Delta)}\ast g$, where, for a function $h:\mathbb G\to\mathbb R$, we define $\check{h}:=h\circ\mathrm{inv}$.}
 	\end{equation}
 	We recall that, when the following integral makes sense for functions $f,g:\mathbb G\to\mathbb R$, we denote
 	\begin{equation}\label{eqn:ScalarProduct}
 	\langle f,g\rangle :=\int_{\mathbb G}f(x)g(x)\de\mu(x).
 	\end{equation}
 	We claim that the following formula holds
 	\begin{equation}\label{eqn:AdjointConv}
 	\langle f\ast h,g\rangle =\langle h,f\,\widetilde{\ast}\, g\rangle=\langle h,\check{(f\Delta)}\ast g\rangle, \quad \text{for every $f,g,h$ continuous with compact support.}
 	\end{equation}
 	Indeed, taking \eqref{eqn:astcheck} into account, by Fubini theorem and the fact that $\mu$ is left-invariant we have
 	\begin{equation}
 	\begin{split}
 	\langle f\ast h,g\rangle &= \int_{\mathbb G}\left(\int_{\mathbb G}f(y)h(y^{-1}x)\de\mu(y)\right)g(x)\de\mu(x) \\
 	&= \int_{\mathbb G}\left(\int_{\mathbb G}g(x)h(y^{-1}x)\de\mu(x)\right)f(y)\de\mu(y)  \\
 	&= \int_{\mathbb G}\left(\int_{\mathbb G}g(yx)h(x)\de\mu(x)\right)f(y)\de\mu(y)  \\
 	&= \int_{\mathbb G}\left(\int_{\mathbb G}f(y)g(yx)\de\mu(y)\right)h(x)\de\mu(x) =\langle h,f\,\widetilde{\ast}\, g\rangle=\langle h,\check{(f\Delta)}\ast g\rangle.
 	\end{split}
 	\end{equation}
 	Notice that if we are on a unimodular Lie group, \eqref{eqn:AdjointConv} implies the well-known formula $\langle f\ast h,g\rangle =\langle h,\check{f}\ast g\rangle$ when the integral makes sense.
 	
 	Let us end this section by computing, for every left-invariant vector field $X$ on $\mathbb G$, the action of the adjoint $X^\top$ of $X$ with respect to $\langle \cdot,\cdot\rangle$, on the smooth functions with compact support. We recall that if $\phi\in C^{\infty}_c(\mathbb G)$, the operator $X^\top$ acting on $\phi$ is the unique smooth function $X^\top\phi$ such that 
 	$$
 	\langle X^\top\phi,\psi\rangle=\langle \phi,X\psi\rangle, \qquad \text{for all $\psi\in C^{\infty}_c(\mathbb G)$}.
 	$$
 	For every $f,g\in C^{\infty}_c(\mathbb G)$, every $X\in\mathfrak g$ and $t>0$ the following equality holds 
 	$$
 	\int _{\mathbb G} f(y\exp(tX))g(y)\de\mu(y)=\int_{\mathbb G} f(y)g(y\exp(-tX))\Delta(\exp(-tX))\de\mu(y),
 	$$
 	due to the fact that $(R_{x^{-1}})_*\mu=\Delta(x)\mu$. We deduce that $X^\top = -\Delta(e)X-(X\Delta)(e)\mathrm{id}=-X-(X\Delta)(e)\mathrm{id}$, since $\Delta(e)=1$. Namely
 	\begin{equation}\label{eqn:Adjoint}
 	\langle Xf,g\rangle=:\langle f,X^\top g\rangle =\langle f, -Xg-(X\Delta)(e)g\rangle, \quad \text{for every $f,g\in C^\infty_c(\mathbb G)$}.
 	\end{equation}

 	\subsection{Distributions on Lie groups}\label{sec:Distr}
 	In this section we recall some basic facts about the theory of distributions on a Lie group $\mathbb G$, see also \cite{Ehr56}. We stress that we will use a variant of the definition of convolution between functions and distributions with respect to \cite{Ehr56}. We fix a left-invariant Haar measure $\mu$ on $\mathbb G$. We remark that every vector space considered will be a vector space over $\mathbb R$.
 	
 	We denote with $\mathcal{D}(\mathbb G)\equiv C^{\infty}_c(\mathbb G)$ the topological vector space of real-valued $C^{\infty}$ functions with compact support on $\mathbb G$ equipped with the final locally convex topology with respect to the immersions $C^{\infty}_K(\mathbb G)\hookrightarrow \mathcal{D}(\mathbb G)$, where $K$ is a compact subset of $\mathbb G$ and $C^{\infty}_K(\mathbb G)$ is the space of real-valued $C^{\infty}$ functions with compact support contained in $K$. Let us recall that, if we fix $\{X_1,\dots,X_n\}$ a basis of the Lie algebra $\mathfrak g$, the (countably many) seminorms that induce the locally convex topology on $C^{\infty}_K(\mathbb G)$ are
 	$$
 	\|f\|_{j_1,\dots,j_r,K}:=\sup_{x\in K}|X_{j_1}\dots X_{j_r} f(x)|,
 	$$ 
 	where $r\in\mathbb N$ and $j_1,\dots,j_r\in\{1,\dots,n\}$. Let us denote with $\mathcal{D}'(\mathbb G)$ the dual of $\mathcal{D}(\mathbb G)$, i.e., the set of continuous linear functionals from $\mathcal{D}(\mathbb G)$ to $\mathbb R$, equipped with the locally convex weak* topology. If $\Phi\in\mathcal{D}'(\mathbb G)$, we recall that by $\langle \Phi,\phi\rangle$ we mean the evaluation of $\Phi$ at $\phi\in \mathcal{D}(\mathbb G)$. Let us remark that if $f\in L^1_{\mathrm{loc}}(\mathbb G,\mu)$, there is a canonical way of seeing $f$ as a distribution, by means of 
 	$$
 	\langle f,\phi\rangle :=\int_{\mathbb G}f(x)\phi(x)\de\mu(x), \qquad \text{for all $\phi\in \mathcal{D}(\mathbb G)$},
 	$$
 	so that the notation of the evaluation $\langle\cdot,\cdot\rangle$ for a distribution is consistent with the one introduced in \eqref{eqn:ScalarProduct}.
 	
 	We recall that if $f_j\in\mathcal{D}(\mathbb G)$, and $f\in\mathcal{D}(\mathbb G)$, then $f_j\to f$ in $\mathcal{D}(\mathbb G)$ if and only of there exists a compact subset $K\subseteq \mathbb G$ such that all the supports of $f_j$'s are contained in $K$ and for every $r\in\mathbb N$  and every $j_1,\dots,j_r\in\{1,\dots,n\}$ we have $X_{j_1}\dots X_{j_r}f_j\to X_{j_1}\dots X_{j_r}f$ uniformly on $K$. On the other hand if $\Phi_j,\Phi\in\mathcal{D}'(\mathbb G)$, we have that $\Phi_j\to\Phi$ in the weak* topology of $\mathcal{D}'(\mathbb G)$ if and only if $\langle\Phi_j,f\rangle\to\langle \Phi,f\rangle$ for every $f\in\mathcal{D}(\mathbb G)$.   %The forthcoming results and definitions are rather classical in the Euclidean case, see for example \cite[Section 1.2.4]{Gra08}. The same results in the setting of simply connected nilpotent Lie groups are adaptations of the Euclidean ones. For the forthcoming discussion we refer the reader to \cite[Section 2 of Chapter 3]{Ric} and to \cite[Section B of Chapter 1]{FS82}.

 	\begin{definition}[Derivative of a distribution and convolution with a function]\label{def:DerivativeAndConv}
 		If $X$ is a left-invariant vector field on $\mathbb G$ and $\Phi\in\mathcal{D}'(\mathbb G)$ we define 
 		$$
 		\langle X\Phi,g\rangle := \langle \Phi, X^\top g\rangle, \qquad \forall g\in \mathcal{D}(\mathbb G),
 		$$
 		where the action of the adjoint $X^\top$ of $X$ on $\mathcal{D}(\mathbb G)$ has been explicitly computed in \eqref{eqn:Adjoint}.
 		
 		Moreover, if $f\in\mathcal{D}(\mathbb G)$ and $\Phi\in\mathcal{D}'(\mathbb G)$ we define 
 		\begin{equation}\label{eqn:Convolution}
 		\langle f\ast \Phi, g\rangle := \langle \Phi, f\,\widetilde{\ast}\, g\rangle=\langle \Phi,\check{(f\Delta)}\ast g\rangle, \quad \forall g\in\mathcal{D}(\mathbb G),
 		\end{equation}
 		where $f\,\widetilde{\ast}\, g$ is defined in \eqref{eqn:OtherConvolution}, see also \eqref{eqn:astcheck}.
 	\end{definition}
 	\begin{remark}[Derivative and convolution of a distribution]\label{rem:DerivativeAndConvolution}
 		We notice that the definitions given in \cref{def:DerivativeAndConv} are consistent with the case in which $\Phi$ is a function, see \eqref{eqn:Adjoint} and \eqref{eqn:AdjointConv}. Moreover, if $\{g_n\}_{n\in\mathbb N}\subseteq \mathcal{D}(\mathbb G)$ and $g_n\to g\in \mathcal{D}(\mathbb G)$ in the topology of $\mathcal{D}(\mathbb G)$, then $X^\top g_n\to X^\top g$ and $f\,\widetilde{\ast}\, g_n\to f\,\widetilde{\ast}\, g$ in the topology of $\mathcal{D}(\mathbb G)$, due to the explicit expressions in \eqref{eqn:Adjoint}, and \eqref{eqn:OtherConvolution}, respectively. Thus $X\Phi$ and $f\ast \Phi$ are well defined distributions whenever $\Phi\in\mathcal{D}'(\mathbb G)$, $f\in\mathcal{D}(\mathbb G)$, and $X$ is a left-invariant vector field. 
 		
 		Actually, we claim that for every $f\in\mathcal{D}(\mathbb G)$ and every $\Phi\in\mathcal{D}'(\mathbb G)$, the distribution $f\ast\Phi$ coincides with the $C^{\infty}(\mathbb G)$ function defined as follows
 		$$
 		f\ast\Phi(x):=\langle \Phi, y\mapsto f(xy^{-1})\Delta(y^{-1})\rangle.
 		$$
 		Indeed, it can be verified\footnote{Compare with \cite[Proposition 6]{Ehr56}. Let us call $\tau^f_x(y):=f(xy^{-1})\Delta(y^{-1})$. It is sufficient to prove that, if $f\in\mathcal{D}(\mathbb G)$ is fixed, the map $x\mapsto \tau_x^f$ is continuous from $\mathbb G$ to $\mathcal{D}(\mathbb G)$ and that $t^{-1}(\tau_{\exp(tX)x}^f-\tau_x^f)\to \tau^{X^Rf}_x$ in $\mathcal{D}(\mathbb G)$ as $t$ goes to zero.}  that $f\ast\Phi(x)$ is $C^{\infty}(\mathbb G)$ and for every $h\in \mathcal{D}(\mathbb G)$ we have
 		$$
 		\int_{\mathbb G} f\ast\Phi(x)h(x)\de\mu(x) = \langle \Phi, y\mapsto \int_{\mathbb G}f(xy^{-1})h(x)\Delta(y^{-1})\de\mu(x)\rangle =\langle \Phi, f\,\widetilde{\ast}\, h\rangle=\langle \Phi,\check{(f\Delta)}\ast g\rangle,
 		$$
 		where the last two equalities follow from \eqref{eqn:OtherConvolution} and \eqref{eqn:astcheck}, respectively.

 	\end{remark}
 	\begin{remark}[Derivative of a convolution]
 		Let us prove that if $f\in\mathcal{D}(\mathbb G)$, $\Phi\in\mathcal{D}'(\mathbb G)$ and $X\in\mathfrak g$ we have 
 		\begin{equation}\label{rem:DerivativeOfAConvolution}
 		X(f\ast \Phi)=f\ast (X\Phi).
 		\end{equation}
 		Indeed, for every $g\in\mathcal{D}(\mathbb G)$,
 		\begin{equation}
 		\begin{split}
 		\langle X(f\ast\Phi),g\rangle &=\langle f\ast\Phi,X^\top g\rangle =\langle \Phi, f\,\widetilde{\ast}\, (X^\top g)\rangle  \\
 		&= \langle \Phi, X^\top(f\,\widetilde{\ast}\, g)\rangle = \langle X\Phi,f\,\widetilde{\ast}\, g\rangle = \langle f\ast X\Phi,g\rangle,
 		\end{split}
 		\end{equation}
 		where we only exploited \cref{def:DerivativeAndConv} and in the third equality we used the fact that $X^\top = -X-(X\Delta)(e)\mathrm{id}$, see \eqref{eqn:Adjoint}, and $X(f\,\widetilde{\ast}\, g)=f\,\widetilde{\ast}\,(Xg)$. 
 	\end{remark}
 	\begin{definition}[Approximate identity]\label{def:approximateidentity} 
 		We say that a sequence $\{\phi_n\}_{n\in \mathbb N}\subseteq \mathcal{D}(\mathbb G)$ is an {\em approximate identity} if the following holds
 		\begin{equation}\label{eqn:Convo1}
 		%	\begin{aligned}
 		%	&\phi_n\ast f \to f, \quad \mbox{in the topology of $\mathcal{D}(\mathbb G)$ for all $f\in\mathcal{D}(\mathbb G)$,} \\
 		\phi_n\ast \Phi\to \Phi, \quad \mbox{in the topology of $\mathcal{D}'(\mathbb G)$ for all $\Phi\in\mathcal{D}'(\mathbb G)$.}
 		%	\end{aligned}
 		\end{equation}
 	\end{definition}
 		%If $\mathbb G$ is a homogeneous group with dilations $\{\delta_\lambda:\lambda>0\}$ that has Hausdorff dimension $Q$, so that the Haar measure is up to a constant $\mathcal{H}^Q$, we can construct an approximate identity as follows: let us take $\phi\in \mathcal{D}(\mathbb G)$ with $\int_{\mathbb G}\phi\de\mu = 1$ and consider 
 		%$$
 		%\phi_\eps(g):=\frac{1}{\eps^Q}\phi(\delta_{1/\eps}g), \quad \forall g\in\mathbb G.
 		%$$
 		We now construct an approximate identity on a Lie group $\mathbb G$. On $\mathbb G$ we fix an arbitrary right-invariant distance $d$ that induces the manifold topology, which for example can be taken to be Riemannian. We fix $\mu$ a left-invariant Haar measure on $\mathbb G$ as well. We know that $\exp:\mathfrak g\to \mathbb G$ is a local analytic diffeomorphism around $0\in\mathfrak g$, see \cite[page 11]{CG90}. By a classical choice of smooth functions that are compactly supported in a sufficiently small neighbourhood of $0\in \mathfrak g$, and by reading these functions on $\mathbb G$ through the exponential map, we can readily build a family of positive functions $\{\phi_{n}\}_{n\in\mathbb N}\subseteq \mathcal{D}(\mathbb G)$ such that the following conditions hold
 		\begin{enumerate}
 			%		\item $\phi_n(y)=\phi_n(y^{-1})$ for all $n\in\mathbb N$ and $y\in\mathbb G$;
 			\item $\int_{\mathbb G} \phi_n(y)\de\mu(y)=1$, for all $n\in\mathbb N$;
 			\item The family $\{\mathrm{supp}(\phi_n)\}_{n\in\mathbb N}$ is a fundamental system of compact neighbourhoods of the identity element $e\in\mathbb G$ contained in a common compact neighbourhood of $e$. 
 		\end{enumerate}
 		\begin{proposition}
 			With the notation and the setting above, whenever $f\in\mathcal{D}(\mathbb G)$, it holds $\phi_n\,\widetilde{\ast}\, f\to f$ in the topology of $\mathcal{D}(\mathbb G)$, where $\,\widetilde{\ast}\,$ is defined in \eqref{eqn:OtherConvolution}. Thus, whenever $\Phi\in\mathcal{D}'(\mathbb G)$, we conclude $\phi_n\ast \Phi\to \Phi$ in the topology of $\mathcal{D}'(\mathbb G)$ and then $\{\phi_n\}_{n\in\mathbb N}$ is an approximate identity.
 		\end{proposition}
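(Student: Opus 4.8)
The plan is to reduce convergence in the strong topology of $\mathcal{D}(\mathbb G)$ to plain uniform convergence, and then to prove the latter by the usual mollifier estimate, the key geometric input being that the chosen distance $d$ is \emph{right}-invariant. First I would record the two structural facts that let one pass from the seminorms defining $\mathcal{D}(\mathbb G)$ to uniform bounds. On the one hand, the support containment $\supp(\phi_n\,\widetilde{\ast}\, f)\subseteq\overline{\supp(\phi_n)^{-1}\cdot\supp(f)}$, combined with property (2) of the family $\{\phi_n\}$ (all supports lie in a common compact neighbourhood $K_0$ of $e$), gives $\supp(\phi_n\,\widetilde{\ast}\, f)\subseteq\overline{K_0^{-1}\cdot\supp(f)}=:K$, a fixed compact set independent of $n$. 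On the other hand, iterating the identity $X(\phi_n\,\widetilde{\ast}\, f)=\phi_n\,\widetilde{\ast}\,(Xf)$ for left-invariant $X$ yields
\[
X_{j_1}\cdots X_{j_r}(\phi_n\,\widetilde{\ast}\, f)=\phi_n\,\widetilde{\ast}\,(X_{j_1}\cdots X_{j_r}f).
\]
Since $X_{j_1}\cdots X_{j_r}f\in\mathcal{D}(\mathbb G)$ as well, it therefore suffices to prove the single claim: for every $h\in\mathcal{D}(\mathbb G)$ one has $\phi_n\,\widetilde{\ast}\, h\to h$ uniformly on $\mathbb G$. Applying this with $h=X_{j_1}\cdots X_{j_r}f$ then gives uniform convergence of every iterated derivative, all on the one compact set $K$, which is exactly convergence in $\mathcal{D}(\mathbb G)$.

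To prove the claim I would use the first expression in \eqref{eqn:OtherConvolution}, namely $\phi_n\,\widetilde{\ast}\, h(x)=\int_{\mathbb G}\phi_n(y)h(yx)\de\mu(y)$, together with $\int_{\mathbb G}\phi_n\,\de\mu=1$ and $\phi_n\ge 0$, to estimate
\[
\abs{\phi_n\,\widetilde{\ast}\, h(x)-h(x)}\le\int_{\mathbb G}\phi_n(y)\,\abs{h(yx)-h(x)}\de\mu(y).
\]
Here the right-invariance of $d$ is the decisive point: $d(yx,x)=d(y,e)$, so the increment $h(yx)-h(x)$ is controlled \emph{uniformly in} $x$ by the distance of $y$ from $e$. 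Since $h$ is continuous with compact support it is uniformly continuous for $d$, so given $\eps>0$ there is a neighbourhood $V$ of $e$ (a $d$-ball) such that $\abs{h(yx)-h(x)}<\eps$ for all $x\in\mathbb G$ and all $y\in V$; by property (2) the supports $\supp(\phi_n)$ eventually lie in $V$, whence the displayed integral is $\le\eps$ uniformly in $x$ for $n$ large. This gives $\phi_n\,\widetilde{\ast}\, h\to h$ uniformly, completing the first part.

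The distributional statement is then purely formal. By the definition \eqref{eqn:Convolution} of $\phi_n\ast\Phi$ one has $\langle\phi_n\ast\Phi,g\rangle=\langle\Phi,\phi_n\,\widetilde{\ast}\, g\rangle$ for every $g\in\mathcal{D}(\mathbb G)$. Since $\phi_n\,\widetilde{\ast}\, g\to g$ in $\mathcal{D}(\mathbb G)$ by the first part, and $\Phi$ is continuous on $\mathcal{D}(\mathbb G)$, I would conclude $\langle\phi_n\ast\Phi,g\rangle\to\langle\Phi,g\rangle$ for every $g$, i.e.\ $\phi_n\ast\Phi\to\Phi$ in the weak* topology of $\mathcal{D}'(\mathbb G)$; this is precisely the defining property of an approximate identity.

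The only genuinely non-formal ingredient is the uniform continuity invoked in the second paragraph, and I expect this to be the main (if mild) obstacle — it is also the reason the authors insisted on a right-invariant $d$, which converts left translations into translations by a fixed $d$-amount. One settles it either by quoting the standard fact that a compactly supported continuous function on a locally compact group is left-uniformly continuous (see \cite{Fol95}), or by a direct Heine--Cantor argument: cover $\supp(h)$ by finitely many $d$-balls on which $h$ oscillates by less than $\eps$, using compactness of a neighbourhood of $\supp(h)$ and the vanishing of $h$ outside it.
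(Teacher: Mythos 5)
Your proposal is correct and follows essentially the same route as the paper's proof: the same support containment to fix a common compact set, the same iteration of $X(\phi_n\,\widetilde{\ast}\, f)=\phi_n\,\widetilde{\ast}\,(Xf)$ to reduce to uniform convergence of a single convolution, the same estimate using $\int\phi_n\,\de\mu=1$ together with right-invariance of $d$ and uniform continuity, and the same formal duality argument for the distributional statement. The only (immaterial) difference is that you invoke global uniform continuity of a compactly supported continuous function, while the paper localizes to a compact set $K'$ containing all points $yx$ involved.
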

 		\begin{proof}
 		By the definitions of convolution with a distribution, see \eqref{eqn:Convolution}, and of convergence in $\mathcal{D}'(\mathbb G)$, the second part of the statement readily follows from the first part.
 		
 		Hence, let us fix $f\in\mathcal{D}(\mathbb G)$ and let $K$ be the support of $f$. Since $\{\mathrm{supp}(\phi_n)\}_{n\in\mathbb N}$ is a fundamental system of compact neighbourhoods of the identity element $e\in\mathbb G$ there exists an infinitesimal decreasing sequence $\varepsilon_n\to 0$ such that 
 		\begin{equation}\label{eqn:E1}
 		\text{if $x\in\mathbb G$ is such that $d(x,e)>\varepsilon_n$ then $\phi_n(x)=0$}. 
 		\end{equation}
 		Moreover, since $\supp(\phi_n\,\widetilde{\ast}\, f)\subseteq \overline{\supp(\phi_n)^{-1}\cdot\supp f}$, there exists a compact set $\widetilde K\supseteq K$ such that
 		\begin{equation}\label{eqn:support}
 		\mathrm{supp}(f),\mathrm{supp}(\phi_n\,\widetilde{\ast}\, f)\subseteq \widetilde K, \qquad \forall n\in\mathbb N.
 		\end{equation}
 		Moreover there exists a compact set $K'\supseteq\widetilde K$ such that 
 		%	and a compact subset $K':=K'(\overline K)\supset \overline K$ of $\mathbb G$ such that
 		\begin{equation}\label{eqn:E2}
 		\text{$\supp(\phi_n)\cdot \widetilde K\subseteq K'$}, \qquad \forall n\in\mathbb N.
 		\end{equation}
 		Let us fix $x\in \widetilde K$. Then from the fact that for all $n\in\mathbb N$ it holds $\int_{\mathbb G}\phi_n(y)\de\mu (y)=1$, and by the very definition of the convolution $\,\widetilde{\ast}\,$ we get, for all $n\in\mathbb N$, that the following inequality holds
 		\begin{equation}\label{eqn:STIMA}
 		\begin{aligned}
 		|\phi_n\,\widetilde{\ast}\, f(x)- f(x)| &= \left|\int_{\mathbb G}\phi_n(y)(f(yx)-f(x))\de\mu(y)\right| \\% = \\ &=\left|\int_{\{y\in K': d(xy^{-1},e)<\varepsilon_n\}}\phi_n(xy^{-1})(f(x)-f(y))\de\mu(y)\right| \leq \\
 		&\leq \int_{\{y\in \mathbb G:y\in\supp(\phi_n)\}}\phi_n(y)|f(yx)-f(x)|\de\mu(y).
 		\end{aligned}
 		\end{equation}
 		Now, from \eqref{eqn:E2}, we get that whenever $x\in\widetilde K$ and $y\in\supp(\phi_n)$ we have $\{x,yx\}\subseteq K'$. Since $f$ is uniformly continuous on the compact set $K'$, taking into account also \eqref{eqn:E1}, we get that for every $\varepsilon>0$ there exists $n_0=n_0(\varepsilon)$ such that for every $n\geq n_0$ the following holds
 		\begin{equation}\label{eqn:COnt}
 		\text{if $x\in\widetilde K$ and $y\in \supp(\phi_n)$, then $d(yx,x)=d(y,e)\leq \varepsilon_n$ and thus $|f(yx)-f(x)|\leq\varepsilon$,}
 		\end{equation}
 		where we used that $d$ is right-invariant.
 		Thus if we fix $\varepsilon >0$ we can use \eqref{eqn:COnt} and continue \eqref{eqn:STIMA} in order to obtain that for all $x\in \widetilde K$ and for every $n\geq n_0(\varepsilon)$ the following inequality holds
 		$$
 		|\phi_n\,\widetilde{\ast}\, f(x)-f(x)| \leq \int_{\{y\in \mathbb G:y\in\supp(\phi_n)\}}\phi_n(y)\varepsilon\de\mu(y) \leq \int_{\mathbb G}\phi_n(y)\varepsilon\de\mu(y)=\varepsilon.
 		$$
 		The previous equation implies that 
 		$$
 		\lim_{n\to+\infty}\sup_{x\in \widetilde K}|\phi_n\,\widetilde{\ast}\, f(x)-f(x)| = 0.
 		$$
 		Let us fix $\{X_1,\dots,X_n\}$ a basis of $\mathfrak g$. By exploiting the fact that $X(f\,\widetilde{\ast}\, g)=f\,\widetilde{\ast}\, (Xg)$ for every left-invariant vector field $X$ and every $f,g\in\mathcal{D}(\mathbb G)$, we get arguing exactly as before, that for every multi-index $\alpha:=(j_1,\dots,j_r)$ with $r\in\mathbb N$ and  $j_1,\dots,j_r\in\{1,\dots,n\}$, 
 		\begin{equation}\label{eqn:limit}
 		\lim_{n\to+\infty}\sup_{x\in \widetilde K}|X^\alpha(\phi_n\,\widetilde{\ast}\, f)(x)-X^\alpha f(x)| = \lim_{n\to+\infty}\sup_{x\in \widetilde K}|(\phi_n\,\widetilde{\ast}\, X^\alpha f)(x)-X^\alpha f(x)|=0,
 		\end{equation}
 		where $X^\alpha := X_{j_1}\dots X_{j_r}$. Hence \eqref{eqn:support} and \eqref{eqn:limit} precisely means that $\phi_n\,\widetilde{\ast}\, f\to f$ in the topology of $\mathcal{D}(\mathbb G)$ that was what we wanted to prove. 
 		\end{proof}
 		We define now the precomposition of a distribution with a right translation.
 		\begin{definition}[Right translation of a distribution]\label{def:DgOnDistributions}
 			Let $f$ be a distribution on a Lie group $\mathbb G$ and let $R_g$ denote the right translation by $g\in\mathbb G$. Then we define 
 			\begin{equation}\label{eqn:fcircRg}
 			\langle f\circ R_g,\phi\rangle := \langle f, \Delta(g^{-1})\phi\circ R_{g^{-1}}\rangle, \qquad \text{for all $\phi\in \mathcal{D}(\mathbb G)$}.
 			\end{equation}
 		\end{definition}
 	By using the fact that $(R_{g})_*\mu=\Delta(g^{-1})\mu$ for all $g\in\mathbb G$, we see that the above definition is consistent with \eqref{eqn:ScalarProduct} when $f$ is a smooth function.
 	Moreover, let us define, for every $g\in\mathbb G$, the operator $D_g$ acting on distributions $f$ on $\mathbb G$ as follows
 	\begin{equation}\label{eqn:ActionDgDistr}
 	D_gf:=f\circ R_g-f.
 	\end{equation}
 	\begin{remark}[Commutation of convolution and right translation]\label{rem:ConvolutionDg}
 		We claim that if $\phi\in\mathcal{D}(\mathbb G)$, $f\in\mathcal{D}'(\mathbb G)$ and $g\in\mathbb G$ then the following equality holds
 		\begin{equation}\label{eqn:FirstConv}
 		\langle (\phi\ast f)\circ R_g,\psi\rangle =\langle \phi\ast (f\circ R_g),\psi\rangle, \qquad \text{for all $\psi\in \mathcal{D}(\mathbb G)$}.
 		\end{equation}
 		Indeed, we have
 		\begin{equation*}
 		\begin{split}
 		\langle (\phi\ast f)\circ R_g,\psi\rangle &= \langle \phi\ast f,\Delta(g^{-1})\psi\circ R_{g^{-1}}\rangle = \langle f, \Delta(g^{-1})\phi\,\widetilde{\ast}\,(\psi\circ R_{g^{-1}})\rangle \\
 		&=\langle f,\Delta(g^{-1})(\phi\,\widetilde{\ast}\,\psi)\circ R_{g^{-1}}\rangle =\langle f\circ R_g,\phi\,\widetilde{\ast}\,\psi\rangle \\
 		&=\langle \phi\ast(f\circ R_g),\psi\rangle,
 		\end{split}
 		\end{equation*}
 		where in the first and the fourth equalities we used the definition in \eqref{eqn:fcircRg}, in the second and the fifth equalities we used the definition in \eqref{eqn:Convolution}, and in the third equality we used $\phi\,\widetilde{\ast}\,(\psi\circ R_{g^{-1}})=(\phi\,\widetilde{\ast}\,\psi)\circ R_{g^{-1}} $, which we now prove. Indeed, for every $x\in\mathbb G$,
 		$$
 		\phi\,\widetilde{\ast}\,(\psi\circ R_{g^{-1}})(x)=\int \phi(y)\psi(yxg^{-1})\de\mu(y)=(\phi\,\widetilde{\ast}\,\psi)\circ R_{g^{-1}}(x),
 		$$
 		where we used the definition in \eqref{eqn:OtherConvolution}. As a consequence of \eqref{eqn:FirstConv} we obtain the following equality, for every $\phi\in\mathcal{D}(\mathbb G)$, $f\in\mathcal{D}'(\mathbb G)$ and $g\in\mathbb G$ 
 		\begin{equation}\label{eqn:DgConvolution}
 		D_g(\phi\ast f) = \phi\ast D_g f.
 		\end{equation}
 	\end{remark}
 			
 	\subsection{Nilpotent Lie groups and stratified groups}\label{sec:NilpAndCarnot}
 	We now focus the attention on special classes of Lie groups. First we discuss nilpotent groups, and then we discuss stratified groups, also called Carnot groups. For the notions in this section we refer the reader to classical books and recent surveys or lecture notes, e.g., \cite{FS82, Varadarajan, CG90, BLU07, LD17}. We stress that every vector space (or Lie algebra) considered will be a vector space (or Lie algebra) over $\mathbb R$. %We will denote with $n$ the topological dimension of the Lie algebra that we are going to consider from now on.
 	%Given a Lie algebra $\mathfrak g$ we construct its {\em descending central series}
 	%$$
 	%\mathfrak g^{(1)}:=\mathfrak g, \qquad \mathfrak g^{(i+1)}:=[\mathfrak g,\mathfrak g^{(i)}], \quad \forall i\geq 1,
 	%$$
 	%where $[A,B]:=\mathrm{span}\{[a,b]:a\in A, b\in B\}$.
 	%A {\em nilpotent Lie algebra} $\mathfrak g$ is a Lie algebra for which there exists one integer $m$ for which $\mathfrak g^{(m)}=\{0\}$. The least integer $s$ for which $\mathfrak g^{(s+1)}=\{0\}$ is called {\em nilpotency step} of $\mathfrak g$. 
 	We recall that a connected Lie group is nilpotent if and only if its algebra is nilpotent. It is well known that the exponential map $\exp\colon\mathfrak g\to \mathbb G$ is a global analytic diffeomorphism whenever $\mathbb G$ is a simply connected nilpotent Lie group, while in general, if $\mathbb G$ is connected and nilpotent but not necessarily simply connected, it is an analytic and surjective map, see \cite[Theorem 3.6.1]{Varadarajan}.

 	We say that a Lie algebra $\mathfrak g$ is {\em stratifiable} if there exist a {\em stratification} of it, namely there exist subspaces $V_1,\dots, V_s$ of the Lie algebra $\mathfrak g$ such that
 	\[
 	\mathfrak g = V_1\oplus \dots\oplus V_s,\qquad [V_1,V_j]=V_{j+1} \quad\forall j=1,\dots,s-1,\qquad [V_1,V_s]=\{0\}.
 	\]
 	When one of such stratifications is fixed we say that $\mathfrak g$ is a {\em stratified} Lie algebra. Recall that two stratifications of a stratifiable Lie algebra differ by an automorphism, see \cite[Proposition 2.17]{LD17}. Notice that every stratified Lie algebra is nilpotent. A {\em stratified group} (also called a {\em Carnot group}) $\mathbb G$ is a simply connected Lie group whose Lie algebra $\mathfrak g$ is stratifiable and one such stratification is fixed. If the stratifications of $\mathbb G$ are made with vector subspaces $V_1,\dots,V_s$, we call $s$ the {\em step} of $\mathbb G$, while $m\coloneqq\dim (V_1)$ is called \emph{rank} of $\mathbb G$. For every $i=1,\dots,s$, we call $V_i$ the {\em $i$-th layer} of the stratification.
 	
 	Every Carnot group has a one-parameter family of {\em dilations} that we denote by $\{\delta_\lambda: \lambda >0\}$. These dilations act on $\mathfrak g$ as 
 	$$
 	(\delta_{\lambda})_{|_{V_i}}:=\lambda^{i}(\mathrm{id})_{|_{V_i}}, \qquad \forall \lambda>0, \quad \forall 1\leq i\leq s,
 	$$
 	and are extended linearly. We will indicate with $\delta_{\lambda}$ both the dilations on $\mathfrak g$ and the group automorphisms corresponding to them via the exponential map. For some features of the general theory of homogeneous Lie groups we refer the reader to \cite[Chapter 1, Section A]{FS82}. For recent developments we refer the reader to \cite{LDNG19}.
 	
	We recall that $\|\cdot\|$ is a {\em homogeneous norm} on a Carnot group $\mathbb G$ if it is continuous from $\mathbb G$ to $[0,+\infty)$ and 
 	\begin{equation}
 	\begin{split}
 	\|g\|&=0, \qquad \mbox{if and only if $g=0$}, \\
 	\|\delta_{\lambda}g\|&=\lambda\|g\|, \qquad \forall \lambda>0, \quad \forall g\in\mathbb G,\\
 	\|g\|&=\|g^{-1}\|, \qquad \forall g\in\mathbb G.
 	\end{split}
 	\end{equation}
 	On a Carnot group a homogeneous norm always exists and moreover two arbitrary homogeneous norms are bi-Lipschitz equivalent. 
 	%Actually the latter statement holds true for arbitrary homogeneous groups, see \cite[Chapter 1, Section A]{FS82}. 
 	Moreover if $\|\cdot\|$ is a homogeneous norm on a Carnot group $\mathbb G$ there exists $C>0$ such that $\|xy\|\leq C(\|x\|+\|y\|)$ for every $x,y \in\mathbb G$, see \cite[Proposition 1.6]{FS82}. We can always construct, on an arbitrary Carnot group, a homogeneous norm such that the previous $C$ is $1$: it suffices to take any Carnot-Carathéodory distance from the identity element of $\mathbb G$, see \cite[Section 3.3]{LD17}.
 	
 	On a Carnot group $\mathbb G$ with a stratification $\mathfrak g=V_1\oplus\dots\oplus V_s$, let us set $m_0\coloneqq 0$ and $m_j\coloneqq \dim{V_j}$ for any $j=1,\dots,s$. We stress that $m=m_1$. Let us define $n_0\coloneqq 0$, and $n_j\coloneqq \sum_{\ell=1}^j m_{\ell}$ for any $j=1,\dots,s$. The ordered set $(X_1,\dots, X_n)$ is an {\em adapted basis} for $\mathfrak g$ if the following facts hold. 
 	\begin{itemize}
 		\item[(i)] The vector fields $X_{n_j+1},\dots,X_{n_{j+1}}$ are chosen among the iterated commutators of order $j$ of the vector fields $X_1,\dots, X_m$, for every $j=1,\dots,s-1$.
 		\item[(ii)] The set $\{X_{n_j+1},\dots,X_{n_{j+1}}\}$ is a basis for $V_{j+1}$ for every $j=0,\dots, s-1$.
 	\end{itemize} 
 	If we fix an adapted basis $(X_1,\dots,X_n)$, and $\ell \in \{1,\dots,n\}$, we define the {\em holonomic degree of $\ell$} to be the unique $j^*\in\{1,\dots,s\}$ such that $n_{j^*-1}+1\leq \ell \leq n_{j^*}$. We denote $\deg\ell \coloneqq j^*$ and we also say that $j^*$ is the {\em holonomic degree of $X_{\ell}$}, i.e., $\deg(X_{\ell})\coloneqq j^*$. If an adapted basis $(X_1,\dots,X_n)$ of the Lie algebra $\mathfrak g$ of a Carnot group $\mathbb G$ is fixed, we identify $x\in\mathbb G$ with a point of $\mathbb R^n$ through {\em exponential coordinates of the first kind} as follows
 	$$
 	x\equiv (x_1,\dots,x_n)\leftrightarrow \exp(x_1X_1+\dots+x_nX_n).
 	$$
 	We recall that the {\em homogeneous degree} of the monomial $x_1^{a_1}\cdot\dots\cdot x_n^{a_n}$ in exponential coordinates of the first kind associated to the adapted basis $(X_1,\dots,X_n)$ is $\sum_{\ell=1}^n a_{\ell}\cdot \deg \ell $.
 	
 		We recall here the definition of free-nilpotent Lie algebras see \cite[Definition 14.1.1]{BLU07}.
 	\begin{definition}[Free-nilpotent Lie algebras of step $s$ with $m$ generators]\label{def:free}
 		Let $m\geq 2$ be an integer number. We say that $\mathfrak{f}_{m,s}$ is the {\em free-nilpotent Lie algebra of step $s$ with $m$ generators} $X_1',\dots , X_m'$ if the following facts hold.
 		\begin{itemize}
 			\item[(i)] $\mathfrak{f}_{m,s}$ is a Lie algebra generated by the elements $X_1',\dots , X_m'$, i.e., $\mathfrak{f}_{m,s}$ is the smallest subalgebra of $\mathfrak{f}_{m,s}$ containing $\{X_1',\dots , X_m'\}$;
 			\item[(ii)] $\mathfrak{f}_{m,s}$ is nilpotent of step $s$, i.e., nested Lie brackets of length $s+1$ are always $0$;
 			\item[(iii)] for every nilpotent Lie algebra $\mathfrak g$ of step $s$ and for every map $\Psi \colon \{X_1',\dots , X_m' \}\to \mathfrak g$, there exists a unique homomorphism of Lie algebras $\overline{\Psi}: \mathfrak{f}_{m,s} \to \mathfrak g$ that extends $\Psi$. 
 		\end{itemize}
 		We stress that every free-nilpotent Lie algebra is stratifiable, see \cite[Example 2.5]{LD17}, with $\mathrm{span}\{X_1',\dots,X_m'\}$ being the first layer of a stratification. Thus there exists a unique Carnot group $\mathbb F_{m,s}$ such that its Lie algebra is the free-nilpotent Lie algebra of step $s$ and with $m$ generators.
 	\end{definition}
 	
 	\section{Polynomial and $S$-polynomial distributions on Lie groups}\label{sec:3}
 	In \cref{sec:3.1} we introduce the notion of polynomial distribution with respect to a subset $S$ of the Lie algebra $\mathfrak g$ of an arbitrary Lie group $\mathbb G$. Roughly speaking we say that a distribution $f$ is polynomial with respect to $S$, or briefly $S$-polynomial, when for every $X\in S$ there exists $k$ such that $X^kf\equiv 0$ in the sense of distributions on $\mathbb G$, see \cref{def:kPolynomial}. When $k$ is independent on $X\in S$ we say that $f$ is $k$-polynomial with respect to $S$, or, that is the same, $S$-polynomial with degree at most $k$. We introduce also the definition of polynomial distribution on arbitrary Lie groups $\mathbb G$, see \cref{def:PolyGeneral}: namely, a distribution on $\mathbb G$ is polynomial if there exists $k\in\mathbb N$ such that for all $X_1,\dots,X_k\in\mathfrak g$ we have $X_1\dots X_kf\equiv 0$ in the sense of distributions on $\mathbb G$. This latter definition happens to be consistent with the definition of polynomial map between groups introduced by Leibman in \cite{Lei02}, see \cref{rem:LeiBravo}. We conclude the subsection by proving a lemma about the pointwise limit of smooth $k$-polynomial functions with respect to $X$, where $X\in \mathfrak g$, see \cref{lem:APPROXKPol}, and \cref{coroll:KpolAtLimit}. These latter results about convergence will come into play in the proof of \cref{thm:MAINCARNOT}.
 		
 	In \cref{sec:3.2} we prove formula \eqref{eqn:RepresentationImportant}, see \cref{lem:Representation}. Namely, given a smooth function $f:\mathbb G\to\mathbb R$ for which $X^kf\equiv 0$ for some $X\in \mathfrak g$ and $k\in\mathbb N$, we show that knowing $f$ on an open set $U\subseteq \mathbb G$ completely determines $f$ on the set $U\cdot\exp(\mathbb R X)$. Then we use \cref{lem:Representation} to prove the fundamental representation formula \eqref{eqn:YEAHBISAdapted} in \cref{lem:EST1BISAdapted} according to which if $f$ is $S$-polynomial with degree at most $k$, with a Lie generating $S$, then it is completely determined by the jet of some sufficiently big order of $f$ at the identity.
 	%The heuristic idea behind \eqref{eqn:RepresentationImportant} is the following: if one completely knows a smooth $k$-polynomial function with respect to $X\in\mathfrak g$ in a neighbourhood $U_p$ of a point $p\in\mathbb G$, then it is possible to reconstruct all the mixed derivatives, of arbitrarily large order, of $f$ in the cylinder $\mathcal{C}_X(U_p):=\{q\exp(tX): t\in\mathbb R, q\in U_p \}$. More precisely the formula \eqref{eqn:RepresentationImportant}  relates the mixed derivatives of $f$, of any order, at a point $q\exp(tX)\in \mathcal{C}_X(U_p)$, with respect to some mixed derivatives of $f$ at the point $q$.
 	
 	In \cref{sec:3.3} we prove \cref{thm:Intro1}.
 	%and we show in \cref{ex:PositiveAffine} that a $S$-polynomial distribution, with $S$ Lie generating, need not to be a polynomial distribution, nor a $\mathfrak g$-polynomial distribution, in arbitrary Lie groups. 
 	The proof of \cref{thm:Intro1}, see \cref{prop:FiniteDim} and \cref{proofThm1}, is reached by means of \cref{lem:ConnectionBIS}, according to which in a connected Lie group $\mathbb G$ around every point there exists a chart that is a concatenation of horizontal curves, and by means of the representation formula in \cref{lem:EST1BISAdapted}. 
 		
 		Before starting the discussion, let us recall some basic facts about the analytic structure of a Lie group. It is a classical result of Gleason, Montgomery and Zippin that a topological group that has the structure of a $C^k$-manifold for some $0\leq k\leq +\infty$ admits exactly one analytic structure that is compatible with the $C^k$-structure, see the discussion in \cite[page 42]{Varadarajan}. Thus, on a Lie group, we can give a meaning to a function $f:\mathbb G\to\mathbb R$ being analytic by using an analytic atlas.
 	
 	\subsection{Relations with pointwise convergence}\label{sec:3.1} In what follows we give the definitions of $S$-polynomial and polynomial distributions on Lie groups.
 	\begin{definition}[$S$-polynomial distributions on Lie groups]\label{def:kPolynomial}
 		Let $\mathbb G$ be a Lie group with Lie algebra $\mathfrak g$, and let us fix a subset $S\subseteq \mathfrak g$. We say that a distribution $f\in\mathcal{D}'(\mathbb G)$ is {\em polynomial with respect to $S$}, or {\em horizontally polynomial} if $S$ is understood, if 
 		$$
 		\forall X\in S\,\, \exists k\in\mathbb N \,\,\text{such that}\,\, X^k f\equiv 0\,\, \text{holds on $\mathbb G$ in the sense of distributions.}
 		$$ 
 		If the previous condition holds, we also say that $f$ is {\em $S$-polynomial}. If the choice of $k$ is uniform on $X\in S$, we say that $f$ is {\em $k$-polynomial with respect to $S$}, or {\em horizontally $k$-polynomial} if $S$ is understood. If the previous condition holds, we also say that $f$ is {\em $S$-polynomial with degree at most $k$}. When $k=2$ we say that $f$ is {\em affine with respect to $S$}, or {\em horizontally affine} if $S$ is understood.
 	\end{definition}
 \begin{remark}[Taylor expansion for $S$-polynomial smooth functions]\label{rem:Taylor}
 	It is easy to notice that if $f:\mathbb G\to\mathbb R$ is a smooth function %differentiable $k$ times along $X$ in the classical sense
 	that is $k$-polynomial with respect to $X$, then 
 	\begin{equation}\label{eqn:taylor}
 	f(p\exp(tX))=f(p)+t(Xf)(p)+\dots+\frac{t^{k-1}}{(k-1)!}(X^{k-1}f)(p), \quad \forall p\in\mathbb G, \forall t\in\mathbb R.
 	\end{equation}
 	The previous observation comes from the fact that for an arbitrary smooth function $f:\mathbb G\to \mathbb R$ the following equality holds
 	$$
 	X^jf(p)=\frac{\de^j}{\de\eps^j}_{|_{\eps=0}} f(p\exp(\eps X)), \quad \forall p\in\mathbb G,\forall j\in\mathbb N.
 	$$
 	Moreover, let us notice that if $f:\mathbb G\to \mathbb R$ is a function such that it is a polynomial with degree at most $k$ along the flow lines of $X\in\mathfrak{g}$, then $X^kf\equiv 0$ holds on $\mathbb G$ in the classical sense and we can write the expansion in \eqref{eqn:taylor}. To be more precise, if we fix $p\in\mathbb G$ and there exist $k$ real numbers $a_0,\dots,a_{k-1}$ such that 
 	\begin{equation}\label{eqn:FromPolToXk}
 	f(p\exp(tX))=a_0+ta_1+\dots+t^{k-1}a_{k-1}, \quad \forall t\in\mathbb R,
 	\end{equation}
 	then $f$ is differentiable $k$ times along $X$ at $p$, $X^jf(p)=j!a_j$ for every $j=0,\dots,k-1$, and $X^kf(p)=0$.
 \end{remark}
 \begin{definition}(Polynomial distributions on Lie groups)\label{def:PolyGeneral}
 	A distribution $f\in\mathcal{D}'(\mathbb G)$ is a {\em polynomial distribution} if there exists a $k_0\in\mathbb N$ such that for every $Y_1,\dots,Y_{k_0}\in\mathfrak g$ we have $Y_1\dots Y_{k_0}f \equiv 0$ on $\mathbb G$.
 \end{definition}
 \begin{remark}
 	Due to \cref{thm:Intro1}, one can equivalently ask $f$ to be an analytic function in \cref{def:PolyGeneral}.
 \end{remark}
 \begin{remark}[Comparison between \cref{def:PolyGeneral} and Leibman's definition in \cite{Lei02}]\label{rem:LeiBravo}
 	In \cite{Lei02} the author gives and studies the notion of polynomial map $f:\mathbb G\to \mathbb H$ between two groups $\mathbb G$ and $\mathbb H$. Given $g\in \mathbb G$ we define the operator $D_g$ that acts on functions $f:\mathbb G\to \mathbb H$ as follows
 	\begin{equation}\label{eqn:DefinitionDg}
 	(D_gf)(g'):=f(g')^{-1}f(g'g).
 	\end{equation}
 	According to \cite[Section 0.2]{Lei02}, a map $f:\mathbb G\to \mathbb H$ between two groups $\mathbb G$ and $\mathbb H$ is a {\em polynomial map with degree at most $d$}, being $d\in\mathbb N$, if for every $g_1,\dots,g_{d+1}\in \mathbb G$ we have 
 	\begin{equation}\label{eqn:gi0}
 	D_{g_1}\dots D_{g_{d+1}} f \equiv e_{\mathbb H},
 	\end{equation}
 	where $e_{\mathbb H}$ is the (function that is constantly equal to the) identity of $\mathbb H$. In our case, i.e., when $\mathbb H = (\mathbb R,+)$, we can give a definition that mimics the previous one but for distributions $f$ on $\mathbb G$. Let us define the operator $D_g$ acting on distributions $f$ as in \eqref{eqn:ActionDgDistr}, thus generalizing \eqref{eqn:DefinitionDg} for distributions.
 	We say that a distribution $f$ on $\mathbb G$ is {\em polynomial à la Leibman with degree at most $d\in\mathbb N$} if for every $g_1,\dots,g_{d+1}\in \mathbb G$ we have
 	\begin{equation}\label{eqn:gi}
 	D_{g_1}\dots D_{g_{d+1}} f\equiv 0, \qquad \text{in the sense of distributions on $\mathbb G$}.
 	\end{equation}
 	Let us notice that if $f$ is continuous the two definitions in \eqref{eqn:gi0} and \eqref{eqn:gi} agree. However there are non-continuous functions, already from $\mathbb R$ to $\mathbb R$, that satisfy \eqref{eqn:DefinitionDg} but they cannot be seen as distributions, see \cref{rem:BLUeKP}.
 	
 	We stress that the result in \cref{prop:Equ1Intro} tells us that \cref{def:PolyGeneral} and Leibman's definition adapted for distributions give raise to the same class of distributions.
 \end{remark}

We prove the following lemma about the pointwise limit of functions that are polynomials along one line in the direction of $X\in\mathfrak g$ emanating from a fixed point $p\in\mathbb G$. We are going to prove that the pointwise limit of such functions, whenever it exists, is still polynomial along the same line. %The following two lemmas will play a role in the proof of \cref{thm:Intro2}, see the proof of \cref{thm:MAINGENERAL}.
\begin{lemma}\label{lem:APPROXKPol}
	Let $\mathbb G$ be a Lie group. Let $p\in\Omega\subseteq\mathbb G$, where $\Omega$ is open, let $X$ be a left-invariant vector field on $\mathbb G$, and let $k\in\mathbb N$. Let $0\in I\subseteq \mathbb R$ be an interval such that $p\exp(tX)\in\Omega$ for all $t\in I$. Let $\{f_n\}_{n\in\mathbb N}$ be a sequence of functions $f_n:\Omega \to\mathbb R$ such that, for every $n\in\mathbb N$, there exists a sequence of $k$-uples of real numbers $\{(a_{n,0},\dots,a_{n,k-1})\}_{n\in\mathbb N}$ such that
	$$
	f_n(p\exp(tX))=a_{n,0}+ta_{n,1}+\dots+t^{k-1}a_{n,k-1}, \qquad \forall t\in I.
	$$
	Let us assume that there exists a function $f:\Omega\to \mathbb R$ such that $f_n\to f$ pointwise on the set $\{p\exp(tX):t\in I\}$, as $n\to +\infty$. Then there exists a $k$-uple of real numbers $(a_0,\dots,a_{k-1})$ such that $a_{n,i}\to a_i$ for all $i=0,\dots, k-1$ and for $n\to+\infty$, and 
	$$
	f(p\exp(tX))=a_0+ta_1+\dots+t^{k-1}a_{k-1}, \qquad \forall t\in I.
	$$
\end{lemma}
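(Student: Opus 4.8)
The plan is to observe that the hypotheses involve only the restriction of each $f_n$ to the single flow line $t\mapsto p\exp(tX)$, so the assertion is purely a one-variable statement about pointwise limits of real polynomials of degree at most $k-1$. I would prove it by reducing the problem to the invertibility of a Vandermonde matrix, which converts pointwise convergence of the polynomials into convergence of their coefficients.

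First I would fix $k$ pairwise distinct points $t_1,\dots,t_k\in I$; this is possible because a nondegenerate interval is infinite. Collecting the coefficients in the vector $\mathbf a_n:=(a_{n,0},\dots,a_{n,k-1})^\top\in\mathbb R^k$ and the sampled values in $\mathbf w_n:=(f_n(p\exp(t_1X)),\dots,f_n(p\exp(t_kX)))^\top$, the defining identity gives $\mathbf w_n=V\mathbf a_n$, where $V:=(t_i^{\,j})_{1\le i\le k,\ 0\le j\le k-1}$ is the Vandermonde matrix associated with $t_1,\dots,t_k$. Since these points are distinct, $\det V=\prod_{1\le i<i'\le k}(t_{i'}-t_i)\ne 0$, and hence $V$ is invertible.

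Next, since $f_n\to f$ pointwise on the line, $\mathbf w_n$ converges to $\mathbf w:=(f(p\exp(t_1X)),\dots,f(p\exp(t_kX)))^\top$. The map $V^{-1}$ being continuous, the coefficient vectors $\mathbf a_n=V^{-1}\mathbf w_n$ converge to $\mathbf a:=V^{-1}\mathbf w$; writing $(a_0,\dots,a_{k-1})$ for the entries of $\mathbf a$, this is exactly the claimed convergence $a_{n,i}\to a_i$ for each $i=0,\dots,k-1$. Finally, to identify the limit function, I would fix an arbitrary $t\in I$ and let $n\to+\infty$ in
\[
f_n(p\exp(tX))=\sum_{i=0}^{k-1}t^i a_{n,i}.
\]
The right-hand side tends to $\sum_{i=0}^{k-1}t^i a_i$ by the convergence of the coefficients just established, while the left-hand side tends to $f(p\exp(tX))$ by hypothesis; uniqueness of limits in $\mathbb R$ then yields $f(p\exp(tX))=a_0+ta_1+\dots+t^{k-1}a_{k-1}$ for every $t\in I$, which is the claim.

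I expect no genuine analytic difficulty here: the whole content is the finite-dimensional linear-algebra fact that evaluation at $k$ distinct nodes is an isomorphism from the space of polynomials of degree at most $k-1$ onto $\mathbb R^k$, so that coefficientwise and pointwise convergence coincide. The only point deserving attention is having enough sampling nodes, i.e.\ that $I$ contains at least $k$ distinct values of $t$; this holds whenever $I$ is nondegenerate. In the degenerate case $I=\{0\}$ the coefficients would not be determined by the values and the statement is vacuous, so I would simply assume throughout that $I$ has nonempty interior.
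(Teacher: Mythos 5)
Your proof is correct and is essentially the same as the paper's: the paper also fixes $k$ pairwise distinct (nonzero) nodes in $I$, writes the evaluations as a Vandermonde system $V\cdot \overline a_n$, uses invertibility of $V$ to get convergence of the coefficient vectors, and then passes to the limit pointwise to identify $f$ along the flow line. Your extra remark about the degenerate case $I=\{0\}$ is a sensible precaution that the paper leaves implicit.
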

\begin{proof}
	Let us fix $k$ pairwise distinct nonzero real numbers in $I$ and let us call them $r_0,\dots, r_{k-1}$. By hypothesis we get that, for every $n\in\mathbb N$, the following holds
	$$
	\sum_{i=0}^{k-1} r_j^i a_{n,i}\xrightarrow{n\to+\infty} f(p\exp(r_jX)), \qquad \forall j=0,\dots,k-1.
	$$
	For every $n\in\mathbb N$ we denote with $\overline a_n$ the column $k$-vector $(a_{n,0},\dots,a_{n,k-1})$, and with $\overline f_{p,X}$ the column $k$-vector $(f(p\exp(r_jX)))_{j=0,\dots,k-1}$. Thus we can write the previous convergence as follows
	$$
	V\cdot \overline a_n\xrightarrow{n\to+\infty} \overline f_{p,X},
	$$
	where $V$ is the $k\times k$ Vandermonde's matrix $V_{ji}:= (r_j^i)_{j,i=0,\dots,k-1}$. Since $V$ is invertible, we conclude that 
	$$
	\overline a_n\xrightarrow{n\to+\infty} V^{-1}\cdot \overline f_{p,X},
	$$
	and thus, if we denote $(a_0,\dots,a_{k-1})$ the components of the vector $V^{-1}\cdot \overline f_{p,X}$, we conclude that $a_{n,i}\to a_i$ for all $i=0,\dots,k-1$ and for $n\to+\infty$. The last part of the statement easily follows from the latter convergence and the fact that $f_n\to f$ pointwise on $\{p\exp(tX):t\in I\}$, as $n\to+\infty$. 
\end{proof}
\begin{corollary}\label{coroll:KpolAtLimit}
	Let $\mathbb G$ be a Lie group, $\Omega\subseteq \mathbb G$ be open, $X$ be a left-invariant vector field on $\mathbb G$, and $k\in\mathbb N$. Let $\{f_n\}_{n\in\mathbb N}$ be a sequence of smooth functions $f_n:\Omega\to \mathbb R$ such that $X^kf_n\equiv 0$ on $\Omega$ for every $n\in\mathbb N$. If there exists a function $f:\Omega\to\mathbb R$ such that $f_n\to f$ pointwise on $\Omega$, then $X^kf\equiv 0$ on $\Omega$ in the classical sense.
\end{corollary}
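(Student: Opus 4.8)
The plan is to reduce the statement to a one-dimensional fact along each individual integral curve of $X$ and then invoke \cref{lem:APPROXKPol} line by line. The key observation is that the hypothesis $X^kf_n\equiv 0$ only constrains the behaviour of $f_n$ along the flow lines of $X$, and the same is true of the desired conclusion $X^kf\equiv 0$ in the classical sense, which is a purely directional statement that still makes sense even though the limit $f$ is a priori only defined pointwise.

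First I would fix an arbitrary point $p\in\Omega$. Since $\Omega$ is open and $t\mapsto p\exp(tX)$ is continuous with value $p$ at $t=0$, there is an open interval $0\in I\subseteq\mathbb R$ with $p\exp(tX)\in\Omega$ for all $t\in I$. Because each $f_n$ is smooth and satisfies $X^kf_n\equiv 0$ on $\Omega$, the identity $X^jf_n(q)=\frac{\de^j}{\de\eps^j}_{|_{\eps=0}}f_n(q\exp(\eps X))$ recalled in \cref{rem:Taylor}, applied at $q=p\exp(tX)$ together with $q\exp(\eps X)=p\exp((t+\eps)X)$, shows that the $k$-th derivative of $t\mapsto f_n(p\exp(tX))$ equals $X^kf_n(p\exp(tX))$ and hence vanishes identically on $I$. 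Therefore $t\mapsto f_n(p\exp(tX))$ is a polynomial of degree at most $k-1$ on $I$: there exist reals $a_{n,0},\dots,a_{n,k-1}$ with $f_n(p\exp(tX))=a_{n,0}+ta_{n,1}+\dots+t^{k-1}a_{n,k-1}$ for all $t\in I$. This is precisely the standing hypothesis of \cref{lem:APPROXKPol}.

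Next, since $f_n\to f$ pointwise on $\Omega$, and in particular on $\{p\exp(tX):t\in I\}$, \cref{lem:APPROXKPol} applies and yields reals $a_0,\dots,a_{k-1}$ with $f(p\exp(tX))=a_0+ta_1+\dots+t^{k-1}a_{k-1}$ for all $t\in I$. Thus $f$ is a polynomial of degree at most $k-1$ along the flow line of $X$ through $p$. Invoking now the converse observation in \cref{rem:Taylor}, namely that the expansion \eqref{eqn:FromPolToXk} forces $f$ to be $k$ times differentiable along $X$ at $p$ with $X^kf(p)=0$, I conclude $X^kf(p)=0$. Since $p\in\Omega$ was arbitrary, $X^kf\equiv 0$ on $\Omega$ in the classical sense.

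I do not expect a genuine obstacle here: the corollary is essentially a repackaging of \cref{lem:APPROXKPol} together with the two directions of \cref{rem:Taylor}, the only subtlety being to notice that the entire argument is one-dimensional and hence never requires $f$ to be regular in directions transverse to $X$. The one point deserving care is the reading of the conclusion: ``$X^kf\equiv 0$ in the classical sense'' is to be interpreted as the pointwise vanishing, at every $p\in\Omega$, of the $k$-th derivative of $t\mapsto f(p\exp(tX))$, which is exactly what the argument delivers and all that is needed in the application to \cref{thm:MAINCARNOT}.
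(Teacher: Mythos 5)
Your proposal is correct and follows essentially the same route as the paper's own proof: fix $p\in\Omega$, restrict to the flow line $\{p\exp(tX):t\in I\}$, use \cref{rem:Taylor} to verify the hypotheses of \cref{lem:APPROXKPol}, apply that lemma to get a polynomial expansion of $f$ along the line, and then use the converse observation around \eqref{eqn:FromPolToXk} to conclude $X^kf(p)=0$. The only difference is that you spell out the intermediate computations (and the meaning of ``classical sense'') more explicitly than the paper does, which is harmless.
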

\begin{proof}
	If we fix $p\in \Omega$, since $\Omega$ is open there exists an interval $0\in I\subseteq \mathbb R$ such that $\{p\exp(tX):t\in I\}\subseteq\Omega$. From \cref{rem:Taylor}, see in particular the localized version of \eqref{eqn:taylor}, we get that the sequence $\{f_n\}_{n\in\mathbb N}$ satisfies the hypotheses of \cref{lem:APPROXKPol}. Thus, applying \cref{lem:APPROXKPol}, the function $f$ coincides with a polynomial with degree at most $k$ in $t$ along the piece of line $\{p\exp(tX):t\in I\}$, and arguing as at the end of \cref{rem:Taylor}, see in particular the localized reasoning above and below \eqref{eqn:FromPolToXk}, we get that $X^kf(p)=0$ and then we get the conclusion since $p\in\Omega$ is arbitrary.
\end{proof}
\subsection{Propagation of being $S$-polynomial}\label{sec:3.2} In the following lemma we are going to prove a formula that will be of crucial importance in the proof of \cref{thm:Intro1} and \cref{thm:Intro2}, since it is the main tool to prove the representation formulas in \cref{lem:EST1BISAdapted}, and \cref{lem:EST1BIS}.
\begin{lemma}\label{lem:Representation}
	Let $\mathbb G$ be a Lie group. Let us fix $k\in\mathbb N$, $X\in\mathfrak{g}$, and let $f:\mathbb G\to \mathbb R$ be a smooth function such that $X^kf\equiv 0$ on $\mathbb G$. Then if we fix $r\in\{0,1,2,\dots\}$ and $X_1,\dots,X_r\in \mathfrak g$, the following formula holds
	\begin{equation}\label{eqn:RepresentationImportant}
	(X_1\ldots X_r f)(q\exp(tX)) = \sum_{i=0}^{k-1} \frac{t^i}{i!} (\mathrm{Ad}_{\exp(tX)}(X_1)\ldots\mathrm{Ad}_{\exp(tX)}(X_r)X^i f)(q),
	\end{equation}
	for every $q\in \mathbb G$ and every $t\in\mathbb R$.
	%We mean that the the left-hand-side exists (a priori it is not true since we know that the function is only smooth in a neighborhood of the origin) and it is equal to the right-hand-side which always exists since the function is smooth on $U_p$.
\end{lemma}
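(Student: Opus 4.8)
The plan is to split \eqref{eqn:RepresentationImportant} into two independent ingredients: a purely group-theoretic commutation identity relating right translation, left-invariant differentiation, and the adjoint, together with the one-variable Taylor expansion supplied by $X^kf\equiv 0$. For a fixed $t\in\mathbb R$, I would introduce the pullback operator $\rho_t$ acting on smooth functions by $(\rho_t g)(q):=g(q\exp(tX))$, i.e. $\rho_t g=g\circ R_{\exp(tX)}$. In this language, the right-hand side of \eqref{eqn:RepresentationImportant} is exactly $\bigl(\mathrm{Ad}_{\exp(tX)}(X_1)\cdots\mathrm{Ad}_{\exp(tX)}(X_r)\,(\rho_t f)\bigr)(q)$, provided one knows that $\rho_t f=\sum_{i=0}^{k-1}\frac{t^i}{i!}X^if$; but this is precisely the Taylor expansion of \cref{rem:Taylor}, valid because $X^kf\equiv0$. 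Thus the entire content of the lemma is the statement that $\rho_t$ can be moved past the operators $X_1,\dots,X_r$ at the cost of replacing each $X_j$ by $\mathrm{Ad}_{\exp(tX)}(X_j)$.

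The key step is therefore the operator identity
\[
\rho_t\circ Y=\bigl(\mathrm{Ad}_{\exp(tX)}Y\bigr)\circ\rho_t,
\]
valid for every left-invariant vector field $Y$. To prove it I would fix a smooth $g$ and a point $q$ and compute $(Yg)(q\exp(tX))=\frac{\de}{\de s}_{|_{s=0}}g(q\exp(tX)\exp(sY))$. Inserting $\exp(-tX)\exp(tX)=e$ and recalling $C_{\exp(tX)}(h)=\exp(tX)h\exp(-tX)$ rewrites the argument as $q\,C_{\exp(tX)}(\exp(sY))\,\exp(tX)$. The curve $\sigma(s):=C_{\exp(tX)}(\exp(sY))$ satisfies $\sigma(0)=e$ and $\dot\sigma(0)=\mathrm{Ad}_{\exp(tX)}Y$ by the very definition of the adjoint. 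Hence differentiating $s\mapsto g\bigl(q\sigma(s)\exp(tX)\bigr)=(\rho_t g)(q\sigma(s))$ at $s=0$ gives the derivative of $\rho_t g$ at $q$ along the left-invariant vector field $\mathrm{Ad}_{\exp(tX)}Y$, namely $\bigl((\mathrm{Ad}_{\exp(tX)}Y)(\rho_t g)\bigr)(q)$. Since $q$ and $g$ are arbitrary, the identity follows.

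With this identity in hand the conclusion is immediate: applying it $r$ times, peeling off $X_1,\dots,X_r$ one at a time with $Y=X_j$ at the $j$-th step, yields
\[
\rho_t\bigl(X_1\cdots X_rf\bigr)=\mathrm{Ad}_{\exp(tX)}(X_1)\cdots\mathrm{Ad}_{\exp(tX)}(X_r)\,\rho_t f,
\]
and substituting $\rho_t f=\sum_{i=0}^{k-1}\frac{t^i}{i!}X^if$ and invoking linearity of the differential operator $\mathrm{Ad}_{\exp(tX)}(X_1)\cdots\mathrm{Ad}_{\exp(tX)}(X_r)$ produces \eqref{eqn:RepresentationImportant} after evaluation at $q$. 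The one place that requires care is the single commutation identity above: one must correctly track that conjugation by $\exp(tX)$ is exactly what converts the right translation $R_{\exp(tX)}$ applied \emph{after} $Y$ into the field $\mathrm{Ad}_{\exp(tX)}Y$ applied \emph{before} it, and that only the first-order velocity $\dot\sigma(0)$ enters because $Y$ is a first-order derivation. Everything else is bookkeeping; smoothness is never an obstruction since $\mathrm{Ad}$, $\exp$, and $f$ are all smooth, and fixing $t$ throughout and letting it vary at the end gives the formula for every $t\in\mathbb R$.
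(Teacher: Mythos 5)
Your proof is correct, and while it rests on the same core computation as the paper's, the decomposition is genuinely different. The paper proves \eqref{eqn:RepresentationImportant} by induction on $r$, taking the full formula --- Taylor sum included --- as the inductive statement: in the step from $r$ to $r+1$ one applies the inductive hypothesis at the moving point $q_\eps:=q\exp(tX)\exp(\eps X_1)\exp(-tX)$ and differentiates at $\eps=0$, using that this curve has velocity $\mathrm{Ad}_{\exp(tX)}(X_1)_{|_q}$. You package that identical geometric fact as a standalone intertwining identity $\rho_t\circ Y=(\mathrm{Ad}_{\exp(tX)}Y)\circ\rho_t$ for the right-translation pullback $\rho_t g=g\circ R_{\exp(tX)}$, an identity valid for all smooth functions and making no use of the hypothesis $X^kf\equiv 0$; the lemma then follows by iterating it $r$ times and substituting the expansion $\rho_tf=\sum_{i=0}^{k-1}\frac{t^i}{i!}X^if$ of \cref{rem:Taylor} once, at the very end, by linearity (the coefficients $t^i/i!$ pull out of the differential operators since $t$ is fixed). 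Your route cleanly separates the group-theoretic content from the polynomiality hypothesis, and the intertwining identity is a reusable, standard fact; what the paper's route buys in exchange is self-containedness, since the induction never introduces operator formalism and each step is a direct computation on the formula being proved. The one delicate point in your argument --- that differentiating along $q\sigma(s)$, with $\sigma(s)=\exp(tX)\exp(sY)\exp(-tX)$, computes the action of the left-invariant field $\mathrm{Ad}_{\exp(tX)}Y$ at $q$ because only the velocity $\dot\sigma(0)$ matters for a first-order derivation --- is exactly the point the paper also needs for its curve $q_\eps$, and you address it explicitly, so the argument is complete.
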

\begin{proof}
	Let us prove the statement by induction on $r$. If $r=0$ the formula holds true by \eqref{eqn:taylor}. Let us now suppose that the statement holds true for some $r\in \{0,1,2,\dots\}$ and let us prove that it holds true for $r+1$. Indeed, let us fix $X_1,\dots,X_{r+1}\in \mathfrak g$, $t\in \mathbb R$, and $q\in\mathbb G$. Then we compute the derivative of $X_2\dots X_{r+1}f$ along $X_1$ as follows
	\begin{equation}\label{eqn:Prev}
	\begin{split}
	X_1(X_2\ldots X_{r+1}f)&(q\exp(tX))=\frac{\de}{\de\eps}_{|_{\eps=0}}(X_2\ldots X_{r+1}f)(q\exp(tX)\exp(\eps X_1)) \\
	&=\frac{\de}{\de\eps}_{|_{\eps=0}}(X_2\ldots X_{r+1}f)(\underbrace{q\exp(tX)\exp(\eps X_1)\exp(-tX)}_{q_\eps}\exp(tX))\\
	&=\frac{\de}{\de\eps}_{|_{\eps=0}}\sum_{i=0}^{k-1} \frac{t^i}{i!} \mathrm{Ad}_{\exp(tX)}(X_2)\ldots\mathrm{Ad}_{\exp(tX)}(X_{r+1})X^i f(q_\eps) \\
	&=\sum_{i=0}^{k-1} \frac{t^i}{i!}\frac{\de}{\de\eps}_{|_{\eps=0}}\mathrm{Ad}_{\exp(tX)}(X_2)\ldots\mathrm{Ad}_{\exp(tX)}(X_{r+1})X^i f(q_\eps) \\
	&=\sum_{i=0}^{k-1} \frac{t^i}{i!}\mathrm{Ad}_{\exp(tX)}(X_1)\ldots\mathrm{Ad}_{\exp(tX)}(X_{r+1})X^i f(q),
	\end{split}
	\end{equation}
	where in the third equality we used the inductive hypothesis, and in the last equality we used that the curve $\eps\to q_{\eps}$ has $\mathrm{Ad}_{\exp(tX)}(X_1)_{|_q}$ as tangent vector at $\varepsilon=0$.
\end{proof}
	\begin{lemma}\label{lem:EST1BISAdapted}
		Let $\mathbb G$ be a Lie group, and let $f:\mathbb G\to\mathbb R$ be smooth and $k$-polynomial with respect to $\{Y_1,\dots,Y_\ell\}\subseteq \mathfrak g$ for some $k,\ell \in\mathbb N$. Given $t_1,\dots,t_\ell\in\mathbb R$, let us define, for every $j=2,\dots,\ell$, the element of the group $g_j:=\exp(t_1Y_1)\dots\exp(t_{j-1}Y_{j-1})$, and let us denote $g_1:=e$ the identity of the group. Then the following equality holds
		\begin{equation}\label{eqn:YEAHBISAdapted}
			f(\exp(t_1Y_1)\dots\exp(t_\ell Y_\ell))= \sum_{i_1,\dots,i_{\ell}=0}^{k-1} \frac{t_1^{i_1}\dots t_\ell^{i_\ell}}{i_1!\dots i_\ell!}\left((\mathrm{Ad}_{g_\ell}Y_\ell)^{i_\ell}(\mathrm{Ad}_{g_{\ell-1}}Y_{\ell-1})^{i_{\ell-1}}\dots(\mathrm{Ad}_{g_1}Y_1)^{i_1} f\right)(e).
		\end{equation}
		%whenever $\exp(t_1Y_1)\dots\exp(t_\ell Y_\ell)\in U$, and 
	\end{lemma}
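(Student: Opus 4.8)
The plan is to prove \eqref{eqn:YEAHBISAdapted} by induction on $\ell$, peeling off the rightmost factor $\exp(t_\ell Y_\ell)$ at each step and feeding the result into \cref{lem:Representation}. To make the induction close it is convenient to prove a slightly more general statement carrying a string of ``front'' derivatives: for every smooth $f$ with $Y_1^kf\equiv\dots\equiv Y_\ell^kf\equiv 0$, every $r\geq 0$, and every $Z_1,\dots,Z_r\in\mathfrak g$,
$$
(Z_1\cdots Z_r f)(\exp(t_1Y_1)\cdots\exp(t_\ell Y_\ell)) = \sum_{i_1,\dots,i_\ell=0}^{k-1}\frac{t_1^{i_1}\cdots t_\ell^{i_\ell}}{i_1!\cdots i_\ell!}\Big((\mathrm{Ad}_{h}Z_1)\cdots(\mathrm{Ad}_{h}Z_r)(\mathrm{Ad}_{g_\ell}Y_\ell)^{i_\ell}\cdots(\mathrm{Ad}_{g_1}Y_1)^{i_1}f\Big)(e),
$$
where $h:=\exp(t_1Y_1)\cdots\exp(t_\ell Y_\ell)$. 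Taking $r=0$ recovers \eqref{eqn:YEAHBISAdapted}. The key algebraic input is the homomorphism property $\mathrm{Ad}_g\circ\mathrm{Ad}_{g'}=\mathrm{Ad}_{gg'}$, which follows from $C_g\circ C_{g'}=C_{gg'}$ and the chain rule.

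The base case $\ell=0$ (empty product, $h=e$) is the tautology $(Z_1\cdots Z_rf)(e)=(Z_1\cdots Z_rf)(e)$, using $\mathrm{Ad}_e=\mathrm{id}$. For the inductive step I write $h=g_\ell\exp(t_\ell Y_\ell)$ with $g_\ell=\exp(t_1Y_1)\cdots\exp(t_{\ell-1}Y_{\ell-1})$, and apply \cref{lem:Representation} to $f$ with $X=Y_\ell$, base point $q=g_\ell$, parameter $t=t_\ell$, and front operators $Z_1,\dots,Z_r$; this is legitimate because $Y_\ell^kf\equiv 0$. One obtains
$$
(Z_1\cdots Z_r f)(g_\ell\exp(t_\ell Y_\ell)) = \sum_{i_\ell=0}^{k-1}\frac{t_\ell^{i_\ell}}{i_\ell!}\Big((\mathrm{Ad}_{\exp(t_\ell Y_\ell)}Z_1)\cdots(\mathrm{Ad}_{\exp(t_\ell Y_\ell)}Z_r)\,Y_\ell^{i_\ell}f\Big)(g_\ell).
$$
Crucially, the right-hand side is again a front-operator expression for $f$ \emph{itself} evaluated at the shorter product $g_\ell$, so I may apply the inductive hypothesis (for $\ell-1$ factors) with the enlarged front string $\mathrm{Ad}_{\exp(t_\ell Y_\ell)}Z_1,\dots,\mathrm{Ad}_{\exp(t_\ell Y_\ell)}Z_r,\underbrace{Y_\ell,\dots,Y_\ell}_{i_\ell}$. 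All these front operators then get conjugated by $\mathrm{Ad}_{g_\ell}$; using $\mathrm{Ad}_{g_\ell}\circ\mathrm{Ad}_{\exp(t_\ell Y_\ell)}=\mathrm{Ad}_{g_\ell\exp(t_\ell Y_\ell)}=\mathrm{Ad}_h$ turns each $\mathrm{Ad}_{g_\ell}\mathrm{Ad}_{\exp(t_\ell Y_\ell)}Z_j$ into $\mathrm{Ad}_hZ_j$, while the $i_\ell$ copies of $Y_\ell$ become $(\mathrm{Ad}_{g_\ell}Y_\ell)^{i_\ell}$. Combining the two nested sums yields the generalized statement for $\ell$.

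The only real bookkeeping — and the step I expect to be the main obstacle — is keeping track of the accumulation of adjoints together with the operator order. Each application of \cref{lem:Representation} preserves the left-to-right order of the front operators and appends the new block $Y_j^{i_j}$ on the innermost (right) side; iterating from $j=\ell$ down to $j=1$, the homomorphism property makes the conjugations telescope, so that the block originating from $Y_j$ is ultimately twisted by exactly $\mathrm{Ad}_{\exp(t_1Y_1)\cdots\exp(t_{j-1}Y_{j-1})}=\mathrm{Ad}_{g_j}$ (with $g_1=e$, leaving $Y_1$ untwisted), and the outermost-to-innermost ordering $Y_\ell,Y_{\ell-1},\dots,Y_1$ is maintained. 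It is worth emphasizing that \cref{lem:Representation} and the inductive hypothesis are always applied to the original function $f$, with all accumulated complexity pushed into the front operators; hence one never needs the derived functions $Y_\ell^{i_\ell}f$ to be $S$-polynomial, and the sole hypotheses used throughout are $Y_j^kf\equiv 0$ for $j=1,\dots,\ell$.
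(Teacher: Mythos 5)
Your proof is correct and follows essentially the same route as the paper: the paper also proves \eqref{eqn:YEAHBISAdapted} by repeatedly applying \cref{lem:Representation} to peel off the rightmost factor (writing out the case $\ell=3$ and noting that the general case ``follows along the same lines''), and then collapses the accumulated conjugations via $\mathrm{Ad}_g\mathrm{Ad}_h=\mathrm{Ad}_{gh}$. Your strengthened inductive statement with the front operators $Z_1,\dots,Z_r$ is exactly the bookkeeping device that makes the paper's omitted general case rigorous, so the two arguments coincide in substance.
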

	\begin{proof}
		Let us show, for simplicity, the computations only in the nontrivial case $\ell=3$, while the general case follows along the same lines and we omit it. For every $(t_1,t_2,t_3)\in\mathbb R^3$ the following equality holds
		\begin{equation*}
			\begin{split}
				f(\exp(t_1Y_1)&\exp(t_2Y_2)\exp(t_3Y_3))=\sum_{i_3=0}^{k-1}\frac{t_3^{i_3}}{i_3!} (Y_3^{i_3}f)(\exp(t_1Y_1)\exp(t_2Y_2)) \\
				&=\sum_{i_3=0}^{k-1}\frac{t_3^{i_3}}{i_3!}\sum_{i_2=0}^{k-1} \frac{t_2^{i_2}}{i_2!}\left((\mathrm{Ad}_{\exp(t_2Y_2)}Y_3)^{i_3}Y_2^{i_2}f\right)(\exp(t_1Y_1)) \\
				&=\sum_{i_3=0}^{k-1}\frac{t_3^{i_3}}{i_3!}\sum_{i_2=0}^{k-1} \frac{t_2^{i_2}}{i_2!}\sum_{i_1=0}^{k-1}\frac{t_1^{i_1}}{i_1!}\left((\mathrm{Ad}_{\exp(t_1Y_1)}\mathrm{Ad}_{\exp(t_2Y_2)}Y_3)^{i_3}(\mathrm{Ad}_{\exp(t_1Y_1)}Y_2)^{i_2}Y_1^{i_1}f\right)(e) \\
				&=\sum_{i_1,i_2,i_3=0}^{k-1}\frac{t_1^{i_1}t_2^{i_2}t_3^{i_3}}{i_1!i_2!i_3!}\left((\mathrm{Ad}_{\exp(t_1Y_1)\exp(t_2Y_2)}Y_3)^{i_3}(\mathrm{Ad}_{\exp(t_1Y_1)}Y_2)^{i_2}Y_1^{i_1}f\right)(e),
			\end{split}
		\end{equation*}
		where in the first equality we used that $f$ is $k$-polynomial with respect to $Y_3$; in the second equality we used \eqref{eqn:RepresentationImportant} with $r=i_3$, $X_1=\dots=X_r=Y_3$, $X=Y_2$, and $q=\exp(t_1Y_1)$; in the third equality we used again \eqref{eqn:RepresentationImportant} with $r=i_3+i_2$, $X_1=\dots=X_{i_3}=\mathrm{Ad}_{\exp(t_2Y_2)}Y_3$, $X_{i_3+1}=\dots=X_{i_3+i_2}=Y_2$, $X=Y_1$, and $q=e$; and in the fourth equality we used that $\mathrm{Ad}_g\mathrm{Ad}_h=\mathrm{Ad}_{gh}$ for every $g,h\in\mathbb G$.
		%Second, let us compute $Yf(p\exp(tX))$, where $Y\in\mathfrak g$, $X\in \mathcal{B}$, and $p\in\mathbb G$, $t\in\mathbb R$ are arbitrary. We have 
		%	\begin{equation}
		%	\begin{split}
		%	Yf(p\exp(tX))&=\frac{\mathrm{d}}{\mathrm{d}\varepsilon}_{|_{\varepsilon=0}} f(p\exp(tX)\exp(\varepsilon Y))  \\ &=\frac{\mathrm{d}}{\mathrm{d}\varepsilon}_{|_{\varepsilon=0}} f(\underbrace{p\exp(tX)\exp(\varepsilon Y)\exp(-tX)}_{p_{\varepsilon}}\exp(tX)) \\
		%	&= \sum_{i=0}^{k-1} \frac{\mathrm{d}}{\mathrm{d}\varepsilon}_{|_{\varepsilon=0}} \frac{t^i}{i!}X^if(p_\varepsilon)=\sum_{i=0}^{k-1} \frac{t^i}{i!}(\mathrm{Ad}_{\exp(tX)}(Y)X^i)_{|_{p}}(f),
		%	\end{split}
		%	\end{equation}
		%	where in the third equality we use that $f$ is $k$-polynomial with respect to $X$ and in the fourth equality we use that the curve $\varepsilon\mapsto p_\varepsilon$ has tangent vector $(\mathrm{Ad}_{\exp(tX)}(Y))_{|_p}$ at $\varepsilon=0$. 
\end{proof}
%\begin{remark}[\eqref{eqn:RepresentationImportant} for analytic functions]
%	Let us stress that the hypothesis of $f$ being $k$-polynomial with respect to $X$ in \cref{lem:Representation} has been used only in order to obtain that the sum at the right hand side of \eqref{eqn:taylor} and \eqref{eqn:RepresentationImportant} are finite. If instead we are working with an arbitrary analytic function $f$ we have the infinite Taylor-like formula
%	\begin{equation}\label{eqn:RepresentationImportantAnalytic}
%	(X_1\ldots X_r f)(q\exp(tX)) = \sum_{i=0}^{+\infty} \frac{t^i}{i!} (\mathrm{Ad}_{\exp(tX)}(X_1)\ldots\mathrm{Ad}_{\exp(tX)}(X_r)X^i)_{|_q}(f),
%	\end{equation}
%	for every $q\in \mathbb G$ and every $t\in I_q$, where $I_q$ is a neighborhood of $0$ that may depend on $q$, and where $r\in\{0,1,2,\dots\}$ and $X,X_1,\dots,X_r$ are arbitrary elements of $\mathfrak g$. 
%\end{remark}
\subsection{Proof of \cref{thm:Intro1}}\label{sec:3.3}
Before starting the proof of \cref{thm:Intro1} we recall here a lemma that tells us that on a connected Lie group we can find local charts by concatenating a fixed amount of flow lines of horizontal vector fields. This is a standard result in control theory.
\begin{lemma}[{\cite[Lemma 3.33]{ABB19}}]\label{lem:ConnectionBIS}
	Let $\mathbb G$ be a connected Lie group of topological dimension $n$ and let $S$ be a subset of the Lie algebra $\mathfrak g$ that Lie generates $\mathfrak g$. %Let $d_{\mathbb G}$ be the subRiemannian distance associated to $\mathcal{B}$. 
	Then there exists an open neighbourhood $U$ of the identity $e$ and $2n$ elements $X_{1},\dots,X_{2n}\in S$ such that for all $p\in \mathbb G$ and all $q\in p\cdot U$ there exist $s_1,\dots,s_{2n}\in\mathbb R$  such that 
	$$
	q=p\exp(s_1X_{1})\dots \exp(s_{2n} X_{2n}); 
	$$
	more precisely there exist an open bounded set $\hat V\subseteq (0,1)^n$ and $(\hat s_1,\dots,\hat s_n)\in (0,1)^n$ such that 
	\begin{equation}\label{eqn:HatPsi}
	\begin{split}
	\hat\psi_p\colon\hat V&\to p\cdot U, \\ \hat\psi_p(s_1,\dots,s_n)\coloneqq p\exp(s_1X_{1})\dots&\exp(s_nX_{n})\exp(-\hat s_nX_{n+1})\dots\exp(-\hat s_1X_{2n}),
	\end{split}
	\end{equation}
	is a diffeomorphism for every $p\in\mathbb G$.
\end{lemma}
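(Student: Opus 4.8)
The plan is to reduce everything to the base point $p=e$ by left-invariance, to produce a full-rank chart out of $n$ flows of vector fields in $S$ using the bracket-generating hypothesis, and finally to re-center this chart at $e$ by appending a fixed ``return word'' of $n$ more flows.

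First I would exploit left-invariance. Since each $X_i$ is left-invariant, its flow at time $s$ is right translation by $\exp(sX_i)$; hence both $\psi_p(s_1,\dots,s_n):=p\exp(s_1X_1)\cdots\exp(s_nX_n)$ and the full map $\hat\psi_p$ factor through $L_p$, namely $\hat\psi_p=L_p\circ\hat\psi_e$. As $L_p$ is an analytic diffeomorphism sending $U$ to $p\cdot U$, it suffices to construct $X_1,\dots,X_{2n}\in S$, reals $(\hat s_1,\dots,\hat s_n)$, and an open $\hat V$ making $\hat\psi_e$ a diffeomorphism onto a neighbourhood $U$ of $e$; the \emph{same} data then works simultaneously for every $p$, which is exactly the uniform statement required, and explains both the ``for all $p$'' and the product form $p\cdot U$.

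Second, and this is the heart, I would find $X_1,\dots,X_n\in S$ and a base point $\bar s$ at which $\psi_e(s)=\exp(s_1X_1)\cdots\exp(s_nX_n)$ has invertible differential. Consider all iterated-flow maps $\exp(t_1Y_1)\cdots\exp(t_mY_m)$ with $Y_i\in S$, and let $r^*$ be the maximal rank attained by their differentials, over all lengths and base points. The claim is $r^*=n$. One builds the rank up one flow at a time: if a reachable point $g$ carries a rank-$r$ image $W\subseteq T_g\mathbb{G}$ and some $X\in S$ has $X(g)\notin W$, then appending the flow of $X$ raises the rank to $r+1$. If instead the maximal rank can never be increased by an appended flow, then $S(g)\subseteq W$ at a maximal configuration and, by maximality, the maximal-rank image distribution is invariant under the flows of the fields of $S$, hence involutive; an involutive distribution containing $S$ contains $\mathrm{Lie}(S)=\mathfrak{g}$ at each point, forcing $\dim W\ge n$. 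Thus $r^*=n$, and this is the constructive content of the Sussmann--Nagano orbit theorem: there is an $n$-flow map $\psi_e$ with $X_i\in S$ of full rank. Since $\psi_e$ is analytic (a composition of exponentials and group multiplication), its Jacobian determinant in any frame is analytic and not identically zero, so its zero set is a proper analytic subvariety; full rank therefore holds on a dense open subset of $\mathbb{R}^n$, and I may take $\bar s\in(0,1)^n$. Shrinking around $\bar s$ gives an open $\hat V\subseteq(0,1)^n$ on which $\psi_e$ is a diffeomorphism onto an open set $U'\ni g_0:=\psi_e(\bar s)$.

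Finally I would re-center. The tail $w:=\exp(-\hat s_nX_{n+1})\cdots\exp(-\hat s_1X_{2n})$ is a fixed group element, so $\hat\psi_e=R_w\circ\psi_e$ is a diffeomorphism of $\hat V$ onto $U:=U'\cdot w$, right translation by $w$ being a diffeomorphism. Choosing $X_{n+j}:=X_{n+1-j}$ and $\hat s_j:=\bar s_j$ yields $w=(\exp(\bar s_1X_1)\cdots\exp(\bar s_nX_n))^{-1}=g_0^{-1}$, so $g_0\in U'$ maps to $e\in U$ and $U$ is a neighbourhood of $e$; composing with $L_p$ as above gives the stated diffeomorphism $\hat\psi_p$ onto $p\cdot U$ for every $p$, using $2n$ fields of $S$ in total. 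The main obstacle is the equality $r^*=n$: appending end-flows alone only contributes the directions $S(g)$, since $\mathrm{Ad}_{\exp(\sigma X)}X=X$ (cf.\ \eqref{eqn:AdjointExponential}) makes the corresponding velocity constant in $\sigma$, so genuinely increasing the rank requires varying the \emph{intermediate} times and controlling how the image distribution transports along the flows. Making the maximal-rank distribution well defined and verifying its flow-invariance and involutivity is the technical crux, and it is precisely here that the bracket-generating hypothesis on $S$ is indispensable.
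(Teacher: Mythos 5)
The paper does not actually prove this lemma --- it quotes it as a standard control-theory fact from \cite[Lemma 3.33]{ABB19} --- and your proposal is a correct reconstruction of precisely that standard argument, together with the two Lie-group-specific refinements needed for the form stated here: incremental rank-building from the bracket-generating hypothesis, the fixed return word to re-center the chart at the identity, left-invariance ($\hat\psi_p=L_p\circ\hat\psi_e$) to get the same data uniformly in $p$, and analyticity of the word map to place $\bar s$ and $\hat V$ inside $(0,1)^n$. The step you flag as the technical crux (well-definedness, flow-invariance and involutivity of a ``maximal-rank image distribution'') can be bypassed in the standard way, with no need for a globally defined distribution: fix a single word map attaining the maximal rank $r^*$, note that its rank is locally constant so its local image is an $r^*$-dimensional submanifold $N$, observe that maximality forces every $X\in S$ to be tangent to $N$ (otherwise appending the flow of $X$ would raise the rank), and then use that brackets of vector fields tangent to a submanifold are again tangent to it, so $\mathrm{Lie}(S)=\mathfrak g$ is tangent to $N$ along $N$, forcing $r^*=n$.
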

As a first step toward the proof of \cref{thm:Intro1}, we prove the finite-dimensional result in the second part of \cref{thm:Intro1} for analytic functions on connected Lie groups. The proof of the forthcoming \cref{prop:FiniteDim} is reached by joining the previous representation formula proved in \cref{lem:EST1BISAdapted} with \cref{lem:ConnectionBIS}.
\begin{proposition}\label{prop:FiniteDim}
	Let $\mathbb G$ be a connected Lie group of topological dimension $n$, and let $S$ be a Lie generating subset of $\mathfrak g$. Then for every $k\in\mathbb N$ there exists $\delta$, which depends on $k$ and $n$, such that the vector space 
	$$
	\mathscr F:=\{f:\mathbb G\to\mathbb R\,\text{is analytic and $k$-polynomial with respect to $S$} \},
	$$
	is finite-dimensional and its dimension is bounded above by $\delta$.
\end{proposition}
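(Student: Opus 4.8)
The plan is to prove that an analytic, $k$-polynomial (with respect to $S$) function is completely determined by a jet of fixed finite order at the identity $e$, so that the linear map sending $f$ to this jet is injective on $\mathscr F$; since the target jet space is finite-dimensional with dimension controlled only by $n$ and $k$, the bound $\delta$ follows at once.

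First I would invoke \cref{lem:ConnectionBIS} for the Lie generating set $S$ to fix an open neighbourhood $U$ of $e$ and elements $X_1,\dots,X_{2n}\in S$ such that every $q\in U$ is of the form $q=\exp(s_1X_1)\cdots\exp(s_{2n}X_{2n})$ for suitable $s_1,\dots,s_{2n}\in\mathbb{R}$. Since each $X_j\in S$ and $f$ is $k$-polynomial with respect to $S$, we have $X_j^kf\equiv 0$ on $\mathbb{G}$, so \cref{lem:EST1BISAdapted} applies with $\ell=2n$ and $Y_j=X_j$. Writing $g_j:=\exp(s_1X_1)\cdots\exp(s_{j-1}X_{j-1})$ as in that lemma (so $g_1=e$), formula \eqref{eqn:YEAHBISAdapted} expresses $f(q)$ as
\[
f(q)=\sum_{i_1,\dots,i_{2n}=0}^{k-1}\frac{s_1^{i_1}\cdots s_{2n}^{i_{2n}}}{i_1!\cdots i_{2n}!}\left((\mathrm{Ad}_{g_{2n}}X_{2n})^{i_{2n}}\cdots(\mathrm{Ad}_{g_1}X_1)^{i_1}f\right)(e).
\]

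The crucial point will be that, for fixed $s_1,\dots,s_{2n}$, each $\mathrm{Ad}_{g_j}X_j$ is again a single element of $\mathfrak{g}$, because every $\mathrm{Ad}_{g_j}$ is a linear automorphism of $\mathfrak{g}$ — this holds even when $\mathbb{G}$ is not nilpotent and the series \eqref{eqn:AdjointExponential} defining $\mathrm{Ad}_{g_j}$ is infinite. Hence the operator acting on $f$ above is a composition of $i_1+\dots+i_{2n}\leq 2n(k-1)=:\delta_1$ first-order left-invariant operators, so it has order at most $\delta_1$, and $f(q)$ is a fixed linear combination, independent of $f$, of the values at $e$ of the iterated derivatives $X_{j_1}\cdots X_{j_r}f$ with $r\leq\delta_1$. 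Consequently, if $f\in\mathscr F$ has all such derivatives vanishing at $e$, then $f\equiv 0$ on $U$; being analytic on the connected group $\mathbb{G}$, $f$ then vanishes identically by the identity principle for real-analytic functions. Fixing a basis $X_1,\dots,X_n$ of $\mathfrak{g}$, this shows that the evaluation map $f\mapsto\big((X_{j_1}\cdots X_{j_r}f)(e)\big)_{r\leq\delta_1}$ is injective on $\mathscr F$. These functionals factor through the universal enveloping algebra $U(\mathfrak{g})$ in degree at most $\delta_1$, whose dimension equals $\binom{n+\delta_1}{n}$ by the Poincar\'e--Birkhoff--Witt theorem, so $\dim\mathscr F\leq\binom{n+2n(k-1)}{n}=:\delta$, a quantity depending only on $n$ and $k$.

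I expect the genuine difficulty to lie entirely in the order-boundedness just highlighted: one must be sure that, although the adjoints $\mathrm{Ad}_{g_j}$ are infinite series of iterated brackets, they do not raise the differential order of the operators appearing in \eqref{eqn:YEAHBISAdapted} beyond $\delta_1$, uniformly in $q\in U$. This is secured by the two structural inputs of the argument — that each $\mathrm{Ad}_{g_j}$ preserves $\mathfrak{g}$, and that \cref{lem:ConnectionBIS} charts a neighbourhood of $e$ using a fixed number $2n$ of horizontal flows — after which analyticity together with connectedness upgrades vanishing on $U$ to vanishing on all of $\mathbb{G}$, completing the proof.
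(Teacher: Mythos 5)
Your proof is correct and follows essentially the same route as the paper: both invoke \cref{lem:ConnectionBIS} to chart a neighbourhood of $e$ by a fixed number of flows of elements of $S$, then use the representation formula \eqref{eqn:YEAHBISAdapted} to see that $f$ near $e$ is a fixed linear functional of a jet of order at most $2n(k-1)$ at the identity, and finally upgrade injectivity of the jet map from $U$ to all of $\mathbb G$ by real-analytic continuation on the connected group. The only cosmetic difference is that you express the jet via left-invariant monomials and a PBW dimension count, where the paper reads \eqref{eqn:YEAHBISAdapted} in the coordinates of the chart $\hat\psi_e$ and uses coordinate derivatives $(\partial_{x^\alpha}f)(e)$; your explicit observation that each $\mathrm{Ad}_{g_j}X_j$ is a single element of $\mathfrak g$, so the operators have uniformly bounded order, is exactly the point the paper uses implicitly.
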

\begin{proof}
	Let $\hat\psi_e:\hat V\subseteq \mathbb R^n\to U\subseteq\mathbb G$ be the local chart around $e$ constructed as in \eqref{eqn:HatPsi}. This chart induces a local frame $(\partial_{x_1},\dots,\partial_{x_n})$ of the tangent bundle $TU$. We denote with $\alpha:=(\alpha_1,\dots,\alpha_n)$ an arbitrary $n$-uple of natural numbers, we set $\lvert \alpha \rvert := \alpha_1+\dots+\alpha_n$ and we denote
	$$
	\partial_{x^\alpha}:=\partial_{\underbrace{x_1\dots x_1}_{\alpha_1}\cdots \underbrace{x_n\dots x_n}_{\alpha_n} }.
	$$
	Recall that $\hat\psi_e$ has the following explicit expression
	$$
	\hat\psi_e(s_1,\dots,s_n)= \exp(s_1X_{1})\dots\exp(s_nX_{n})\exp(-\hat s_nX_{n+1})\dots\exp(-\hat s_1X_{2n}),
	$$
	for some $X_1,\dots,X_{2n}\in S$, and $(\hat s_1,\dots,\hat s_n)\subseteq(0,1)^n$. Moreover, since $\mathrm{Ad}$, the exponential map, and the product on the group $\mathbb G$ are analytic functions, from \eqref{eqn:YEAHBISAdapted} read in coordinates and the previous explicit expression of $\hat\psi_e$ we get that there exists $D:=D(k,n)$ such that 
	\begin{equation}\label{eqn:Cidiceiniettiva}
		f(\hat\psi_e(s_1,\dots,s_n))= \sum_{\lvert \alpha \rvert \leq D} h_{\alpha}(s_1,\dots,s_n)(\partial_{x^{\alpha}}f)(e),
	\end{equation}
	for every $(s_1,\dots,s_n)\in \hat V$, and where, for every multi-index $\alpha$, the function $h_\alpha(s_1,\dots,s_n):\hat V \to\mathbb R$ is an analytic function depending on the chart $\hat\psi_e$. Now, there exists $\delta:=\delta(D)$ such that the number of the operators $(\partial_{x^{\alpha}})_{|_e}$ with $|\alpha|\leq D$ is $\delta$.	Let us define the vector space
	$$
	\mathscr F':=\{f_{|_U},\,\text{where $f:\mathbb G\to\mathbb R$ is analytic and $k$-polynomial with respect to $S$} \},
	$$
	and let $\Psi:\mathscr F'\to\mathbb R^{\delta}$ be the linear map defined by $\Psi(f_{|_U}):=((\partial_{x^{\alpha}}f)(e))_{|\alpha|\leq D}$ for every $f_{|_U}\in \mathscr F'$. From \eqref{eqn:Cidiceiniettiva} we get that $\Psi$ is an injective map, and thus the dimension of $\mathscr F'$ is finite and bounded above by $\delta$. 
	
	Let $\Psi':\mathscr{F}\to\mathscr F'$ be the linear map defined by $\Psi'(f)=f_{|_U}$ for every analytic function $f$ on $\mathbb G$. By analytic continuation we deduce that whenever $f_{|_U}\equiv g_{|_U}$ for two analytic functions, then $f\equiv g$ on $\mathbb G$.
	%By the previous argument, and a simple translation, we conclude that a $k$-polynomial smooth function $f$ on $B_{\varepsilon}(p)$ is completely determined by the jet $(\partial_{x^{\alpha}}f(p))_{|\alpha|\leq D}$, where, with a little abuse of notation, $x$ is now a local chart around $p$. Thus if $f_{|_{B_\varepsilon(e)}}=g_{|_{B_\varepsilon(e)}}$ we conclude $f=g$ on $\mathbb G$, since $\varepsilon$ depends only on $\mathcal{B}$ and then from the equality of $f$ and $g$ on $B_{\varepsilon}(e)$ we obtain the equality $\mathbb G$ by invading all $\mathbb G$ with balls of radius $\varepsilon$. 
	Thus $\Psi'$ is injective as well and we deduce that $\mathscr F$ is finite-dimensional and its dimension is bounded above by $\delta$.
\end{proof}
\begin{proof}[Proof of \cref{thm:Intro1}]\label{proofThm1}
	We first notice that we can reduce to work with the case $\mathbb G$ is connected. Indeed, the connected component of the identity of $\mathbb G$ is itself a Lie group, and if we know the result for such a connected component we obtain the result for all the connected components by composing the distribution $f$ to the right with some left translation, which preserves the condition of being $S$-polynomial. Thus, from now on in the proof, we assume $\mathbb G$ is connected.
	
	Up to taking a subset of $S$ that is finite and still Lie generates $\mathfrak g$, we may assume that $S$ is finite. Since now $S$ is finite, there exists $k\in\mathbb N$ such that $f$ is $k$-polynomial with respect to $S$. %Let us claim that $f$ is represented by a smooth function. 
	Let $\phi_n$ be an approximate identity as in \cref{def:approximateidentity}. Thus $\phi_n \ast f$ is a smooth function on $\mathbb G$, see \cref{rem:DerivativeAndConvolution}, and $X(\phi_n\ast f)=\phi_n\ast Xf$ for every $X\in\mathfrak g$, see \eqref{rem:DerivativeOfAConvolution}. Thus, iterating \eqref{rem:DerivativeOfAConvolution}, $\phi_n\ast f$ is smooth and $k$-polynomial with respect to $S$. We claim that for every $n\in\mathbb N$ the function $\phi_n\ast f$  is analytic on $\mathbb G$.
	
	Indeed, for the sake of notation, let us fix $n\in\mathbb N$ and let us rename $h:=\phi_n \ast f$. For every $g\in\mathbb G$, the function $h\circ L_g$, where $L_g$ is the left translation by $g\in\mathbb G$, is still smooth and $k$-polynomial with respect to $S$. Thus in order to prove that $h$ is analytic it is sufficient to prove that $h\circ L_g$ is analytic in a neighbourhood of $e$ for every $g\in\mathbb G$. In conclusion, in order to prove that $h$ is analytic on $\mathbb G$, we only need to prove that every $\widetilde h\in C^{\infty}(\mathbb G)$ which is $k$-polynomial with respect to $S$ is analytic in a neighbourhood of the identity. Let $\hat\psi_e:\hat V\subseteq \mathbb R^n\to U\subseteq\mathbb G$ be the local chart around $e$ constructed as in \eqref{eqn:HatPsi} from $S$. From the fact that for every $(s_1,\dots,s_n)\in\hat V$ we have
	$$
	\hat\psi_e(s_1,\dots,s_n)= \exp(s_1X_{1})\dots\exp(s_nX_{n})\exp(-\hat s_nX_{n+1})\dots\exp(-\hat s_1X_{2n}),
	$$
	for some $X_1,\dots,X_{2n}\in S$ and $(\hat s_1,\dots,\hat s_n)\subseteq\mathbb R^n$, and the fact that the representation formula \eqref{eqn:Cidiceiniettiva} holds with some analytic functions $h_{\alpha}$ that only depend on the chart $\hat\psi_e$, we conclude that, for every $\widetilde h\in C^{\infty}(\mathbb G)$ that is $k$-polynomial with respect to $S$, the function
	$\widetilde{h}\circ\hat\psi_e$ is an analytic real-valued function defined on $\hat V$. Moreover $\hat\psi_e$ is an analytic map, since it is a composition of the exponential map with the product of the group. Since $\hat\psi_e$ is invertible being a chart, by the inverse function theorem we conclude that $(\hat\psi_e)^{-1}$ is analytic as well\footnote{This assertion follows from the fact that if a diffeomorphism $\xi$ between open subsets of $\mathbb R^n$ is analytic, then $\xi^{-1}$ is analytic as well. Indeed, there exists a complex holomorphic extension of $\xi$, say $\widetilde \xi$, between open subsets of $\mathbb C^n$. At every $x\in \mathrm{dom}(\xi)$, the Jacobian determinant $\det ((J\xi)_x)$ is nonzero since $\xi$ is a diffeomorphism. Then, when seeing $x$ as an element of $\mathbb C^n$, the complex Jacobian determinant $\det ((J\widetilde\xi)_x)$, see \cite[page 30]{Hol}, is nonzero as well due to \cite[Chapter I, Theorem 7.2]{Hol}. Thus from the complex Inverse Function Theorem, see \cite[Chapter I, Theorem 7.5]{Hol}, $\widetilde\xi$ is biholomorphic in a neighbourhood of $x$ seen in $\mathbb C^n$, and then, by restriction, $\xi^{-1}$ is analytic in a neighbourhood of $\xi(x)$.} as a map from $U$ to $\hat V$. Thus, for every $\widetilde h\in C^{\infty}(\mathbb G)$ that is $k$-polynomial with respect to $S$, we have that $\widetilde h=(\widetilde h\circ \hat\psi_e)\circ(\hat\psi_e)^{-1}$ is analytic as a map from $U$ to $\mathbb R$, since it is the composition of analytic functions, and then the proof of the claim is concluded.  
	
	Hence $\phi_n \ast f\in\mathscr F$, see \cref{prop:FiniteDim}, and $\mathscr F$ is closed in the weak*-topology of $\mathcal{D}'(\mathbb G)$ since it is finite-dimensional, see \cite[Theorem 1.21]{Rudin}. Since $\phi_n\ast f\to f$ in the topology of $\mathcal{D}'(\mathbb G)$, because $\phi_n$ is an approximate identity, we conclude that $f$ has a representative in $\mathscr F$ as well, and thus $f$ is represented by an analytic function. 
	
	In order to prove the second part of \cref{thm:Intro1}, let us fix a distribution $f$ that is $S$-polynomial with degree at most $k\in\mathbb N$. Hence $f$ is represented by an analytic function from what we proved above, and the application of \cref{prop:FiniteDim} concludes the proof.
\end{proof}

\section{The case of nilpotent Lie groups}\label{sec:4}
	{In this section we focus our attention on the case when $\mathbb G$ is a connected nilpotent Lie group. We recall that the exponential map $\exp\colon\mathfrak g\to \mathbb G$ is a global analytic diffeomorphism whenever $\mathbb G$ is a simply connected nilpotent Lie group, while, when $\mathbb G$ is connected and nilpotent but not necessarily simply connected, it is an analytic and surjective map, see \cite[Theorem 3.6.1]{Varadarajan}. Hence, we can use the exponential map to give a natural definition of polynomial in exponential chart. We say that a function $f\colon\mathbb G\to\mathbb R$ is polynomial in exponential chart if $f\circ\exp$ is a polynomial, see \cref{def:Polynomial}. Such a notion, in the nilpotent setting, is consistent, see \cref{rem:EquivalencePolyInSimplyConnected}, with both the definition of polynomial distribution on a Lie group, see \cref{def:PolyGeneral}, and with Leibman's definition in \cref{rem:LeiBravo}. 
		
	In \cref{lem:EST1BIS} we use the formula in \cref{lem:Representation} to prove that whenever a smooth function $f$ is $k$-polynomial with respect to $S$ in a nilpotent Lie group of nilpotency step $s$, then $f$ is polynomial along the concatenation of flows of elements of $S$ emanating from a fixed $p\in\mathbb G$. More precisely we prove that, if $p\in\mathbb G$ is fixed, $\ell\in\mathbb N$, and $Y_1,\dots,Y_\ell\in S$, the map  $(t_1,\dots,t_\ell)\mapsto f(p\exp(t_1Y_1)\dots\exp(t_\ell Y_\ell))$ is a polynomial in the variables $t_1,\dots,t_\ell$ and we explicitly bound the degree of the polynomial with a constant $\nu:=\nu(k,s,\ell)$.
		
	Thus we use the latter result to give the proof of \cref{thm:Intro2} in \cref{sec:4.1}. In order to do so we first prove that on a simply connected nilpotent Lie group $\mathbb G$ a distribution that is polynomial with respect to a set $S$ that Lie generates $\mathfrak g$ is actually polynomial in exponential chart, and then we reduce to the simply connected case by passing to the universal cover. In order to prove the result for $\mathbb G$ nilpotent and simply connected, we reduce ourselves to the case of Carnot groups, by lifting the problem to a free-nilpotent Lie algebra, see the proof of \cref{thm:MAINGENERAL}. \cref{thm:Intro2} in the case of Carnot groups is proved in \cref{thm:MAINCARNOT} and the proof goes as follows. First, we use the main result in \cref{thm:Intro1} to obtain that a distribution that is $k$-polynomial with respect to a set $S$ that Lie generates the Lie algebra is represented by an analytic function. Second, we prove that each homogeneous term in the Taylor series of $f$ around the identity, see \eqref{eqn:SERIES}, is $k$-polynomial with respect to $S$ as well. Third, we conclude because, thanks to the fundamental \cref{lem:EST1BIS}, and thanks to an improvement of \cref{lem:ConnectionBIS} in the setting of Carnot groups, namely \cref{lem:Connecting}, a smooth $k$-polynomial function with respect to $S$ has polynomial growth of bounded order at infinity. Thus the Taylor expansion of $f$ at the identity is finite, and from the fact that $f$ is analytic we conclude that $f$ coincides with this finite Taylor expansion, namely $f$ is a polynomial.
		
	%We stress that in order to prove that a polynomial distribution with respect to a set $S$ that Lie generates $\mathfrak g$ is an analytic function, see \cref{thm:KpolySmoothFunctionGeneral}, we prove \cref{thm:MAINBIS}, according to which on an arbitrary nilpotent group a $k$-polynomial function with respect to a finite set that Lie genereates $\mathfrak g$ is locally around every point and in a particular chart a polynomial. This is again a consequence of \cref{lem:EST1BIS} with a proper mollification argument and a very general connection lemma that holds for arbitrary subRiemannian manifolds and that we here adapt to the case of Lie groups, see \cref{lem:ConnectionBIS}.
	
%Since on a simply connected nilpotent Lie group $\mathbb G$ the exponential map is a global analytic diffeomorphism, we can give the following equivalent definition of analytic function. We stress that the forthcoming definition does not depend on the choice of a basis of the Lie algebra $\mathfrak g$.}
 %\begin{remark}[Analytic function on simply connected nilpotent Lie groups]
 	%Given a simply connected nilpotent Lie group $\mathbb G$ of dimension $n$ and a basis $\{X_1,\dots,X_n\}$ of the Lie algebra $\mathfrak g$, a function $f:\mathbb G\to\mathbb R$ is {\em analytic} if and only if 
 	%$$
 	%\mathbb R^n \ni (t_1,\dots,t_n)\mapsto f(\exp(t_1X_1+\dots+t_nX_n)),
 	%$$
 	%is a real-valued analytic function of the variables $t_1,\dots,t_n$.
 %\end{remark}
 We give the following notion of polynomial in exponential chart. Let us stress that the following definition agrees with the one given in \cite[Definition 20.1.1]{BLU07} and therein studied in the more restrictive setting of stratified groups. We also stress that the forthcoming definition does not depend on the choice of a basis of the Lie algebra $\mathfrak g$.
 \begin{definition}[Polynomial in exponential chart on connected nilpotent Lie groups]\label{def:Polynomial}
 	Given a connected nilpotent Lie group $\mathbb G$ of dimension $n$ and a basis $\{X_1,\dots,X_n\}$ of the Lie algebra $\mathfrak g$, we say that $f:\mathbb G\to\mathbb R$ is {\em polynomial in exponential chart} if 
 	$$
 	\mathbb R^n \ni (t_1,\dots,t_n)\mapsto f(\exp(t_1X_1+\dots+t_nX_n)),
 	$$
 	is a polynomial function of the variables $t_1,\dots,t_n$.
 \end{definition}
\begin{remark}[\cref{def:Polynomial}, \cref{def:PolyGeneral}, and Leibman's definition are consistent]\label{rem:EquivalencePolyInSimplyConnected}
	When $\mathbb G$ is a connected and nilpotent Lie group, the result in \cref{prop:Equ2Intro} tells us that the definition of polynomial in exponential chart, see \cref{def:Polynomial}, the definition of polynomial distribution à la Leibman, see \cref{rem:LeiBravo}, and the definition of polynomial distribution, see \cref{def:PolyGeneral}, are equivalent.
\end{remark}
	
In the forthcoming \cref{lem:EST1BIS} we prove that on an arbitrary nilpotent group $\mathbb G$ a smooth $k$-polynomial function with respect to $S$ is polynomial along the concatenation of lines in the directions of $S$. In order to prove this we exploit the formula in \cref{lem:Representation} and the following remark.
\begin{remark}[Formula \eqref{eqn:RepresentationImportant} on nilpotent groups]\label{rem:Nilpoten}
	Let us notice that if $\mathbb G$ is a nilpotent Lie group of nilpotency step $s$ we have that $(\mathrm{ad}_X)^s(Y)\equiv 0$ for all $X,Y\in \mathfrak g$. Then, from \eqref{eqn:AdjointExponential} we get
	$$
	\mathrm{Ad}_{\exp(tX)}(Y)=\sum_{j=0}^{s-1}\frac{1}{j!}(\mathrm{ad}_{tX})^j(Y),
	$$ 
	for very $X,Y\in \mathfrak g$ and every $t\in\mathbb R$. Thus, if we fix $t\in\mathbb R$, $r\in \{0,1,2,\dots\}$, and $X,X_1,\dots,X_r\in\mathfrak g$, we conclude that $\mathrm{Ad}_{\exp(tX)}(X_1)\ldots\mathrm{Ad}_{\exp(tX)}(X_r)$ is a sum of left-invariant operators of the form $X_{i_1}\dots X_{i_k}$, where $k$ is at most $rs$ and $X_{i_1},\dots,X_{i_k}\in\{X,X_1,\dots,X_r\}$, each one multiplied by a polynomial in $t$. This means that, if in the setting of \cref{lem:Representation} the group $\mathbb G$ is nilpotent of nilpotency step $s$, the right hand side in \eqref{eqn:RepresentationImportant} can be written as the sum of at most $rs+k-1$ mixed derivatives, in some of the directions $X,X_1,\dots,X_r$, of $f$, each one multiplied by a polynomial in $t$. Notice also that the degree of $t$ in the right hand side of \eqref{eqn:RepresentationImportant} can be at most $r(s-1)+k-1$. 
	%As a consequence, if we are in the setting of \cref{lem:Representation} and moreover $\mathbb G$ is nilpotent of nilpotency step $s$, we conclude that in order to know the mixed derivatives up to order $r$ of the function $f$ on the cylinder $\mathcal{C}_X(U_p):=\{q\exp(tX): t\in\mathbb R, q\in U_p \}$ it suffices to know all the mixed derivatives of $f$ on $U_p$ up to order $rs+k-1$. 
\end{remark}
\begin{lemma}\label{lem:EST1BIS}
	For each positive integers $s,k,\ell$ there exist positive integers $D, \nu$ with the following property. Let $\mathbb G$ be a nilpotent Lie group of nilpotency step $s$ and $p\in\mathbb G$. Let us assume that $f:\mathbb G\to\mathbb R$ is smooth and $k$-polynomial with respect to $Y_1,\dots,Y_\ell\in \mathfrak{g}$. Then, there exists a polynomial $P:\mathbb R^\ell \to \mathbb R$ with degree at most $\nu$, whose coefficients depend on mixed derivatives along some directions of $\{Y_1,\dots,Y_\ell\}$ of order at most $D$ of $f$ at $p$, such that
	\begin{equation}\label{eqn:YEAHBIS}
	f(p\exp(t_1Y_1)\dots\exp(t_\ell Y_\ell))= P(t_1,\dots,t_\ell),
	\end{equation}
	for every $t_1,\dots,t_\ell\in\mathbb R$. 
\end{lemma}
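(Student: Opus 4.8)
The plan is to feed the representation formula \eqref{eqn:YEAHBISAdapted} of \cref{lem:EST1BISAdapted} into the nilpotent expansion of the adjoint recorded in \cref{rem:Nilpoten}, and then to bookkeep degrees and orders. First I would reduce to the case $p=e$: the function $\tilde f:=f\circ L_p$, where $L_p$ denotes left translation by $p$, is again smooth, and by left-invariance of the $Y_j$ (so that $Y_j^k\tilde f=(Y_j^kf)\circ L_p\equiv 0$) it is again $k$-polynomial with respect to $Y_1,\dots,Y_\ell$. Since $f(p\exp(t_1Y_1)\dots\exp(t_\ell Y_\ell))=\tilde f(\exp(t_1Y_1)\dots\exp(t_\ell Y_\ell))$ and $(Z_1\dots Z_m\tilde f)(e)=(Z_1\dots Z_m f)(p)$ for left-invariant $Z_i$, it suffices to prove the statement for $\tilde f$ at the identity, where \eqref{eqn:YEAHBISAdapted} applies verbatim.

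Applying \eqref{eqn:YEAHBISAdapted} to $\tilde f$ expresses $\tilde f(\exp(t_1Y_1)\dots\exp(t_\ell Y_\ell))$ as the finite sum over $i_1,\dots,i_\ell\in\{0,\dots,k-1\}$ of the prefactor $t_1^{i_1}\cdots t_\ell^{i_\ell}/(i_1!\cdots i_\ell!)$ times $\big((\mathrm{Ad}_{g_\ell}Y_\ell)^{i_\ell}\cdots(\mathrm{Ad}_{g_1}Y_1)^{i_1}\tilde f\big)(e)$, where $g_j=\exp(t_1Y_1)\cdots\exp(t_{j-1}Y_{j-1})$. The key step is to expand each operator $\mathrm{Ad}_{g_j}Y_j=\mathrm{Ad}_{\exp(t_1Y_1)}\cdots\mathrm{Ad}_{\exp(t_{j-1}Y_{j-1})}Y_j$ via \cref{rem:Nilpoten} and \eqref{eqn:AdjointExponential}: since $\mathbb G$ has step $s$, each factor $\mathrm{Ad}_{\exp(t_mY_m)}$ equals the polynomial $\sum_{q=0}^{s-1}(t_m^q/q!)(\mathrm{ad}_{Y_m})^q$ in $t_m$, so $\mathrm{Ad}_{g_j}Y_j$ is a $\mathfrak g$-valued polynomial in $(t_1,\dots,t_{j-1})$ whose coefficients are iterated brackets of $Y_1,\dots,Y_j$. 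Writing each such bracket as a commutator of operators turns $(\mathrm{Ad}_{g_\ell}Y_\ell)^{i_\ell}\cdots(\mathrm{Ad}_{g_1}Y_1)^{i_1}$ into a finite sum of monomials $Y_{a_1}\cdots Y_{a_m}$ with $a_1,\dots,a_m\in\{1,\dots,\ell\}$, each multiplied by a polynomial in $(t_1,\dots,t_\ell)$. Evaluating at $e$ and collecting, the left-hand side becomes a polynomial $P(t_1,\dots,t_\ell)$ whose coefficients are finite linear combinations of mixed derivatives $(Y_{a_1}\cdots Y_{a_m}f)(p)$, as required.

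It then remains to bound the order $D$ and the degree $\nu$ uniformly in $s,k,\ell$, in particular independently of $\mathbb G$ and of the $Y_j$. For the order: as an operator $\mathrm{Ad}_{g_j}Y_j$ is a sum of monomials in the $Y_m$ of length at most $(j-1)(s-1)+1$, since each of its $j-1$ adjoint factors raises the bracket depth by at most $s-1$; hence its $i_j$-th power involves monomials of length at most $(k-1)\big((j-1)(s-1)+1\big)$, and summing over $j=1,\dots,\ell$ gives an admissible $D:=(k-1)\sum_{j=1}^\ell\big((j-1)(s-1)+1\big)$. For the degree: the prefactor $t_1^{i_1}\cdots t_\ell^{i_\ell}$ contributes at most $\ell(k-1)$, while $(\mathrm{Ad}_{g_j}Y_j)^{i_j}$ contributes a $t$-polynomial of degree at most $(k-1)(j-1)(s-1)$, so one may take $\nu:=\ell(k-1)+(k-1)(s-1)\tfrac{\ell(\ell-1)}{2}$. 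Both quantities depend only on $s,k,\ell$.

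Once the two ingredients (\cref{lem:EST1BISAdapted} and \cref{rem:Nilpoten}) are in place, the argument is essentially bookkeeping, so I do not expect a genuine conceptual obstacle. The only point requiring real care is extracting the bounds \emph{uniformly}: it is precisely the nilpotency step $s$ that truncates every adjoint expansion at $s-1$ terms regardless of the ambient group, and this truncation is what makes $D$ and $\nu$ finite and independent of $\mathbb G$ and of the chosen directions $Y_1,\dots,Y_\ell$.
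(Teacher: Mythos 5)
Your proposal is correct, and it takes a genuinely different (and in some ways more efficient) route than the paper. The paper does \emph{not} use \cref{lem:EST1BISAdapted} here: it runs a fresh induction on the length $j$ of the partial concatenation $p\exp(t_1Y_1)\dots\exp(t_jY_j)$, applying \cref{lem:Representation} together with \cref{rem:Nilpoten} at each step to control all mixed derivatives of order up to a recursively defined threshold $a_j$ (with $a_{j-1}=sa_j+k-1$), and assembling the degree bounds $\nu_j$ along the way; this keeps the base point $p$ throughout, with no translation argument. You instead reduce to $p=e$ by left translation (correctly noting that left-invariant derivatives commute with $L_p$, so the coefficients remain mixed derivatives of $f$ at $p$), invoke the already-established identity \eqref{eqn:YEAHBISAdapted}, and then do pure bookkeeping: the nilpotent truncation $\mathrm{Ad}_{\exp(tY)}=\sum_{q=0}^{s-1}(t^q/q!)\mathrm{ad}_Y^q$ makes each $\mathrm{Ad}_{g_j}Y_j$ an operator-valued polynomial in $t$, and expanding brackets into commutator monomials yields the polynomial $P$ directly. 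The two arguments share the same root (\eqref{eqn:YEAHBISAdapted} is itself proved from \cref{lem:Representation}), but yours avoids re-running the induction and produces constants $D,\nu$ that grow only polynomially in $\ell$, whereas the paper's recursion $a_{j-1}=sa_j+k-1$ makes $D=a_0$ grow like $s^{\ell-1}$; since the lemma only asserts existence of constants depending on $s,k,\ell$, both are admissible, and your looser per-factor estimate (length $(j-1)(s-1)+1$ rather than the sharper $\min\{(j-1)(s-1)+1,s\}$ forced by nilpotency) is harmless. The only point worth stating explicitly in a write-up is the standard fact that an iterated Lie bracket of left-invariant vector fields, viewed as a left-invariant differential operator, expands as a signed sum of monomials of the same length in those vector fields; this is what converts the $\mathfrak g$-valued coefficients into the mixed derivatives required by the statement.
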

\begin{proof}
	If $\ell=1$ the proof is straightforward from \eqref{eqn:taylor}. Let us assume $\ell\geq 2$. Let us inductively construct the string of natural numbers $\{a_0,\dots,a_{\ell-1}\}$ as follows
	\begin{equation}\label{eqn:DefinitionOfSequenceBIS}
	\begin{aligned}
	a_{\ell-1}:=k-1, \qquad a_{j-1}:=sa_{j}+k-1, \quad \forall 1\leq j\leq \ell-1.
	\end{aligned}
	\end{equation}
	Let us inductively define the string of natural numbers $\{\nu_1,\dots,\nu_{\ell-1}\}$ as follows
	\begin{equation}\label{eqn:OtherInductiveConstantsBIS}
	\begin{aligned}
	&\nu_1:= k-1+a_1(s-1), \qquad \nu_{j+1}:=\nu_j+k-1+a_{j+1}(s-1), \quad \forall j=1,\dots,\ell-2, \\
	\end{aligned}
	\end{equation}
	where if $\ell=2$ the second part of the previous equation does not come into play. We claim that 
	\begin{equation}\label{eqn:TheConstants}
	\nu=\nu(k,s,\ell):=\nu_{\ell-1}+k-1, \qquad D=D(k,s,\ell):=a_0.
	\end{equation}
	Let us prove an intermediate result. Let us fix $\{Y_1,\dots,Y_\ell\}\subseteq S$.
	We want to prove by induction on $j=1,\dots,\ell-1$ the following statement. For every $0\leq m\leq a_j$, and for every $X_{i_1},\dots,X_{i_m}\in \{Y_1,\dots,Y_\ell\}$ there exists a polynomial $P_j:\mathbb R^j\to \mathbb R$ with degree at most $\nu_j$ whose coefficients depend on the mixed derivatives of $f$ at $p$ of order at most $a_0$ along some directions of $\{Y_1,\dots,Y_\ell\}$,  such that for every $t_1,\dots,t_j\in \mathbb R$,  the following equality holds
	\begin{equation}\label{eqn:inductionBIS}
	(X_{i_1}\dots X_{i_m}f)(p\exp(t_1Y_1)\dots\exp(t_j Y_j))=P_j(t_1,\dots,t_j).
	\end{equation}
	Let us go through the base step. Let us fix $0\leq m\leq a_1$ and $X_{i_1},\dots,X_{i_m}\in\{Y_1,\dots,Y_\ell\}$, and we want to write $(X_{i_1}\dots X_{i_m}f)(p\exp(t_1Y_1))$. In order to do this we use \eqref{eqn:RepresentationImportant} in the case $\mathbb G$ is a nilpotent group, see \cref{rem:Nilpoten}. Indeed,  from the right hand side of \eqref{eqn:RepresentationImportant} and the reasoning in \cref{rem:Nilpoten}, we get that $(X_{i_1}\dots X_{i_m}f)(p\exp(t_1Y_1))$ is a sum of at most $sm+k-1\leq sa_1+k-1=a_0$ mixed derivatives of $f$ evaluated at $p$ in some directions of the set $\{Y_1,\dots,Y_\ell\}$ each one multiplied by a polynomial in $t_1$. Moreover the degree of $t_1$ is at most $k-1+m(s-1)\leq k-1+a_1(s-1)=\nu_1$, see again the right hand side in \eqref{eqn:RepresentationImportant} and the reasoning in \cref{rem:Nilpoten}. Thus we have proved \eqref{eqn:inductionBIS} in the base step $j=1$. 
	%Notice in particular that we have proved that $P_1(t_1)$ is a polynomial with degree at most $\nu_1$ whose coefficients depend on the mixed derivatives  of $f$ along some directions of the set $\{Y_1,\dots,Y_\ell\}$ of order at most $a_0$ at $p$.
	Let us now proceed with the induction and let us assume the statement is true for some $j=1,\dots,\ell-2$. We want to prove it true for $j+1$. Thus, let us fix $0\leq m\leq a_{j+1}$ and $X_{i_1},\dots,X_{i_m}\in\{Y_1,\dots,Y_\ell\}$, and we want to write
	$$
	(X_{i_1}\dots X_{i_m}f)(\underbrace{p\exp(t_1Y_1)\dots\exp(t_j Y_j)}_{p'}\exp(t_{j+1}Y_{j+1})).
	$$
	We can thus apply \eqref{eqn:RepresentationImportant} with $p'$ instead of $q$.
	%\footnote{We can do this because, as it is clear from the proof of \cref{lem:Representation}, from the fact that $f$ is smooth at $p$ we actually obtain that $f$ is smooth on every multi-cylinder $$\mathcal{C}_{X_1,\dots,X_m}(p):=\{p\exp(t_1X_1)\dots\exp(t_mX_m): t_1,\dots,t_m\in\mathbb R\},$$
	%	where $X_1,\dots,X_m\in S$. 
	%	{\color{red} Maybe it could be a good idea to isolate this propagation of regularity as an independent result.}}. 
	From the right hand side of \eqref{eqn:RepresentationImportant} and the reasoning in  \cref{rem:Nilpoten} we get that $(X_{i_1}\dots X_{i_m}f)(p'\exp(t_{j+1}Y_{j+1}))$ is a sum of at most $sm+k-1\leq sa_{j+1}+k-1=a_j$, see \eqref{eqn:DefinitionOfSequenceBIS}, mixed derivatives of $f$ in some directions of $\{Y_1,\dots,Y_\ell\}$ evaluated at $p'=p\exp(t_1Y_1)\dots\exp(t_jY_j)$, each one multiplied by a polynomial in $t_{j+1}$. Moreover the degree of $t_{j+1}$ is at most $k-1+m(s-1)\leq k-1+a_{j+1}(s-1)$, see again the right hand side of \eqref{eqn:RepresentationImportant} and the reasoning in \cref{rem:Nilpoten}. By the inductive hypothesis, every mixed derivative of order at most $a_j$ of $f$ in some directions of $\{Y_1,\dots,Y_\ell\}$ evaluated at $p'=p\exp(t_1Y_1)\dots\exp(t_{j} Y_{j})$ is a polynomial in $(t_1,\dots,t_{j})$ with degree at most $\nu_j$ whose coefficients depend on mixed derivatives of $f$ at $p$ of order at most $a_0$ along some directions of $\{Y_1,\dots,Y_\ell\}$. Then, for all $t_1,\dots,t_{j+1}\in\mathbb R$ we can write
	$$
	(X_{i_1}\dots X_{i_m}f)(\underbrace{p\exp(t_1Y_1)\dots\exp(t_j Y_j)}_{p'}\exp(t_{j+1}Y_{j+1}))=P_{j+1}(t_1,\dots,t_{j+1}),
	$$
	where $P_{j+1}$ is a polynomial with degree at most $\nu_j+k-1+a_{j+1}(s-1)=\nu_{j+1}$, see \eqref{eqn:OtherInductiveConstantsBIS}, and whose coefficients depend on mixed derivatives of $f$ at $p$ of order at most $a_0$ along some directions of $\{Y_1,\dots,Y_\ell\}$. Thus this concludes the proof of \eqref{eqn:inductionBIS} by induction. 
	
	Let us now complete the proof. By \eqref{eqn:taylor} we can write, for every $t_1,\dots,t_\ell\in\mathbb R$, that
	$$
	f(\underbrace{p\exp(t_1 Y_1)\dots\exp(t_{\ell-1}Y_{\ell-1})}_{p''}\exp(t_\ell Y_\ell))=\sum_{i=0}^{k-1}\frac{t_\ell^i}{i!}(Y_\ell^if)(p'').
	$$
	By the induction before, see \eqref{eqn:inductionBIS}, and since $a_{\ell-1}=k-1$, see \eqref{eqn:DefinitionOfSequenceBIS}, we conclude that, for every $0\leq i\leq k-1$, $(Y^i_\ell f)(p'')=(Y^i_\ell f)(p\exp(t_1 Y_1)\dots\exp(t_{\ell-1}Y_{\ell -1}))$ is a polynomial in $(t_1,\dots,t_{\ell-1})$ with degree at most $\nu_{\ell-1}$ whose coefficients depend on mixed derivatives of $f$ at $p$ of order at most $a_0$ along some directions of $\{Y_1,\dots,Y_\ell\}$. Thus we conclude \eqref{eqn:YEAHBIS} where $P$ is a polynomial with degree at most $\nu:=\nu_{\ell -1}+k-1$ whose coefficients depend on mixed derivatives of $f$ at $p$ of order at most $D=a_0$ along some directions of $\{Y_1,\dots,Y_\ell\}$. Thus we obtained the result with the constants chosen in \eqref{eqn:TheConstants}.
\end{proof}
\subsection{Proof of \cref{thm:Intro2}}\label{sec:4.1} Before starting the proof of \cref{thm:Intro2} we give a refinement of \cref{lem:ConnectionBIS} when $\mathbb G$ is a Carnot group.
	\begin{lemma}\label{lem:Connecting}
	Let $\mathbb G$ be a Carnot group of topological dimension $n$, let $S$ be a basis of its first layer, and let $d_{\mathbb G}$ be the subRiemannian distance associated to $S$. Then there exist $\widetilde C$, and $X_{1},\dots,X_{2n}\in S$ such that given two arbitrary points $p,q\in \mathbb G$ there exist $t_1,\dots,t_{2n}\in\mathbb R$  such that 
	$$
	q=p\exp(t_1X_{1})\dots \exp(t_{2n} X_{2n}), 
	$$
	and 
	\begin{equation}\label{eqn:Conclude}
	d_{\mathbb G}(p,q)\geq \widetilde C(\abs{t_1}+\dots+\abs{t_{2n}}).
	\end{equation}
\end{lemma}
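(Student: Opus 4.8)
The plan is to exploit the two structures that a Carnot group carries in addition to those of a general connected Lie group, namely the left-invariance and the one-homogeneity of the subRiemannian distance $d_{\mathbb G}$ together with the dilations $\{\delta_\lambda\}_{\lambda>0}$, so as to upgrade the purely local chart of \cref{lem:ConnectionBIS} into the global statement with the quantitative lower bound \eqref{eqn:Conclude}.

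First I would reduce to the case $p=e$. Since $d_{\mathbb G}$ is left-invariant and the identity $q=p\exp(t_1X_1)\cdots\exp(t_{2n}X_{2n})$ is equivalent to $p^{-1}q=\exp(t_1X_1)\cdots\exp(t_{2n}X_{2n})$, with $d_{\mathbb G}(p,q)=d_{\mathbb G}(e,p^{-1}q)$, it suffices to produce, for every $g\in\mathbb G$, reals $t_1,\dots,t_{2n}$ with $g=\exp(t_1X_1)\cdots\exp(t_{2n}X_{2n})$ and $d_{\mathbb G}(e,g)\geq\widetilde C\sum_i\abs{t_i}$. I would then apply \cref{lem:ConnectionBIS} with $p=e$ to obtain the $2n$ vectors $X_1,\dots,X_{2n}\in S$ and the chart $\hat\psi_e\colon\hat V\to U$, with $\hat V\subseteq(0,1)^n$, onto an open neighbourhood $U$ of $e$; crucially, here $S$ is a basis of the first layer, so all $X_i$ lie in $V_1$, on which $\delta_\lambda$ acts as multiplication by $\lambda$.

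The core of the argument is a normalisation by dilations onto a fixed sphere. Using that $q\mapsto d_{\mathbb G}(e,q)$ is a homogeneous norm, I would fix $c>0$ so small that the closed ball $\{x:d_{\mathbb G}(e,x)\leq c\}$ is contained in $U$. Given $g\neq e$, set $\mu:=c/d_{\mathbb G}(e,g)$; by one-homogeneity $d_{\mathbb G}(e,\delta_\mu g)=\mu\,d_{\mathbb G}(e,g)=c$, so $\delta_\mu g\in U$ and one may write $\delta_\mu g=\hat\psi_e(s_1,\dots,s_n)=\exp(s_1X_1)\cdots\exp(s_nX_n)\exp(-\hat s_nX_{n+1})\cdots\exp(-\hat s_1X_{2n})$ with $(s_1,\dots,s_n)\in\hat V$. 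Applying the automorphism $\delta_{1/\mu}$ and using $\delta_{1/\mu}(\exp(sX))=\exp((s/\mu)X)$ for $X\in V_1$, one obtains $g=\exp(t_1X_1)\cdots\exp(t_{2n}X_{2n})$, where each $t_i$ is the corresponding parameter of $\hat\psi_e$ divided by $\mu$. Since all these parameters lie in $(0,1)$ (as $\hat V\subseteq(0,1)^n$ and each $\hat s_j\in(0,1)$), this gives $\sum_{i=1}^{2n}\abs{t_i}<2n/\mu=(2n/c)\,d_{\mathbb G}(e,g)$, which is exactly \eqref{eqn:Conclude} with $\widetilde C:=c/(2n)$; the case $g=e$ is settled by taking all $t_i=0$.

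The step to watch is precisely this normalisation: one cannot pick $\mu$ so that $\delta_\mu g$ merely lies \emph{in} $U$, because the fixed parameters $\hat s_j$ contribute a positive constant to $\sum\abs{s_i}$ while $d_{\mathbb G}(e,\delta_\mu g)$ could be arbitrarily small, and the desired bound would then fail. Forcing $\delta_\mu g$ onto the sphere of radius $c$ keeps $d_{\mathbb G}(e,\delta_\mu g)$ bounded below by $c$ and synchronises the scaling of the distance with the scaling of the parameters, which is what makes $\widetilde C$ uniform in $g$. Note that no compactness argument is needed, since $\hat V\subseteq(0,1)^n$ already bounds all the parameters; the only inputs beyond \cref{lem:ConnectionBIS} are the left-invariance and one-homogeneity of $d_{\mathbb G}$ and the action of $\delta_\lambda$ on $V_1$.
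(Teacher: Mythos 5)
Your proof is correct and follows essentially the same route as the paper's: reduce to $p=e$ by left-invariance, take the chart $\hat\psi_e$ from \cref{lem:ConnectionBIS}, use a dilation to bring the point onto a fixed sphere inside the chart's image, and then dilate back, exploiting that the $X_i$ lie in the first layer so the parameters scale linearly and are bounded by $1$ on the normalised point. The only cosmetic difference is the normalisation radius (the paper dilates onto the sphere of radius $\eps/2$ inside $B_\eps(e)$ and gets $\widetilde C=\eps/(4n)$, you dilate onto the sphere of radius $c$ and get $\widetilde C=c/(2n)$), which changes nothing of substance.
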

\begin{proof}
	%For the proof we adapt to our setting a lemma that holds for arbitrary subRiemannian manifolds and that can be found in \cite[Lemma 3.33]{ABB19}. The Carnot group $\mathbb G$ is a subRiemannian manifold of dimension $n$ with generating family given by $\mathcal{B}=\{X_1,\dots,X_m\}$. A direct application of \cite[Lemma 3.33]{ABB19} at the identity $e\in\mathbb G$ leads to the existence of a point $\hat s:=(\hat s_1,\dots,\hat s_n)\in (0,1)^n$ and to the existence of vector fields $X_{i_1},\dots,X_{i_n}\in\mathcal{B}$ such that the map 
	%	$$
	%\psi:\mathbb R^n\to\mathbb G, \qquad \psi(s_1,\dots,s_n):=\exp(s_1X_{i_1})\dots\exp(s_nX_{i_n}),
	%	$$
	%	is regular at the point $\hat s$. As a consequence we get that the map 
	%	$$
	%	\hat\psi:\mathbb R^n\to M, \qquad \hat\psi(s_1,\dots,s_n):=\psi(s_1,\dots,s_n)\exp(-\hat s_nX_{i_n})\dots\exp(-\hat s_1X_{i_1}),
	%	$$
	%	is a diffeomorphism from a neighborhood $\hat V \subseteq (0,1)^n$ of $\hat s$ to a neighborhood of the identity $e$. 
	Without loss of generality we can prove the statement for $p=e$. From \cref{lem:ConnectionBIS}, by using the same notation therein, up to eventually reduce $\hat V$ there exists $\eps>0$ such that $(\hat\psi_e)_{|_{\hat V}}:\hat V\to B_\eps(e)$ is a diffeomorphism, where $\hat V\subseteq (0,1)^n$, and $B_\eps(e)$ is the open ball, in the metric $d_{\mathbb G}$, of radius $\eps$ and centre the identity $e$. Thus we have shown that there exist $\eps>0$ and $X_{1},\dots,X_{2n}\in S$ such that for all $q\in B_\eps(e)$ there exists at least one $(t_1,\dots,t_n)\in\hat V\subseteq (0,1)^n$ such that 
	$$
	\hat\psi_e(t_1,\dots,t_n)=q.
	$$ 
	We now claim that the Lemma holds true with $\widetilde C:=\eps/(4n)$, and with the $2n$ vector fields $X_{1},\dots,X_{2n}\in S$ found above. Let us define the norm $\|g\|_{\mathbb G}:=d_{\mathbb G}(g,e)$. Let us take an arbitrary $q\in\mathbb G$ and consider $q':=\delta_{\eps/(2\|q\|_{\mathbb G})}q$, where $\eps$ is defined above and $\delta_{\lambda}$ is the dilation of factor $\lambda$ on $\mathbb G$. Since $q'\in B_\eps(e)$ we have that there exists $(t_1,\dots,t_n)\in (0,1)^n$ such that 
	$$
	\exp(t_1X_{1})\dots\exp(t_nX_{n})\exp(-\hat s_nX_{n+1})\dots\exp(-\hat s_1X_{2n})=:\hat\psi_e(t_1,\dots,t_n)=q',
	$$
	where $\hat s\in (0,1)^n$ is as in \cref{lem:ConnectionBIS}. Thus if we dilate by a factor $k:=2\|q\|_{\mathbb G}/\eps$ the previous equality we get, since $X_1,\dots,X_n$ are in the first layer
	$$
	\exp(kt_1X_{1})\dots\exp(kt_nX_{n})\exp(-k\hat s_nX_{n+1})\dots\exp(-k\hat s_1X_{2n})=q.
	$$
	Since $(t_1,\dots,t_n)\in (0,1)^n$ and $(\hat s_1,\dots,\hat s_n)\in (0,1)^n$ we get that
	$$
	\abs{kt_1}+\dots+\abs{kt_n}+ \abs{k\hat s_1}+\dots+\abs{k\hat s_n} \leq \frac{4n}{\eps}\|q\|_{\mathbb G},
	$$
	and thus we conclude that every point $q\in\mathbb G$ can be connected to the identity $e\in\mathbb G$ by means of a concatenation of at most $2n$ horizontal lines in the directions $X_{1},\dots,X_{2n}$, and moreover \eqref{eqn:Conclude} holds with $\widetilde C=\eps/(4n)$. This concludes the proof.
\end{proof}

We now show that on an arbitrary Carnot group a $k$-polynomial distribution with respect to a basis of the first layer of the Lie algebra is polynomial in exponential chart. The forthcoming proposition is the first step to prove \cref{thm:Intro2}. Indeed the forthcoming \cref{thm:MAINCARNOT} is precisely \cref{thm:Intro2} in the setting of Carnot groups. Then we will prove \cref{thm:MAINGENERAL}, which is \cref{thm:Intro2} for simply connected nilpotent groups, and eventually we conclude this section by giving the proof of \cref{thm:Intro2}.

\begin{proposition}\label{thm:MAINCARNOT}
	Let $k,n,s$ be positive integers. Then there exists a constant $\nu$ such that the following holds. Let $\mathbb G$ be an arbitrary Carnot group of step $s$ and topological dimension $n$, and assume that $f\in\mathcal{D}'(\mathbb G)$ is a distribution that is $k$-polynomial with respect to a basis $S$ of the first layer of $\mathfrak g$. Then $f$ is a polynomial in exponential chart, see \cref{def:Polynomial}, with degree at most $\nu$.
\end{proposition}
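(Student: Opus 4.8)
The plan is to use \cref{thm:Intro1} to reduce to the analytic case and then to extract, one at a time, the homogeneous components of the Taylor expansion of $f$ at the identity, controlling their growth by a bound that is uniform in the degree. Since a basis $S$ of the first layer Lie generates $\mathfrak g$ in any stratified algebra, \cref{thm:Intro1} applies and $f$ is represented by an analytic function, so from now on I treat $f$ as analytic. As $\mathbb G$ is a Carnot group, $\exp\colon\mathfrak g\to\mathbb G$ is a global analytic diffeomorphism, and I expand $f$ in exponential coordinates of the first kind, grouping the monomials of $f\circ\exp$ by homogeneous degree with respect to the dilations $\{\delta_\lambda\}$. This gives $f=\sum_{j\ge 0}P_j$ near $e$, where each $P_j$ is a polynomial in exponential chart that is $\delta_\lambda$-homogeneous of degree $j$, i.e.\ $P_j\circ\delta_\lambda=\lambda^j P_j$.

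The first key step is to show that every component $P_j$ is again $k$-polynomial with respect to $S$. Since each $X\in S$ lies in the first layer, it is $\delta_\lambda$-homogeneous of degree $1$, so $X^k(f\circ\delta_\lambda)=\lambda^k (X^kf)\circ\delta_\lambda=0$; hence each rescaling $g\mapsto\lambda^{-j_0}f(\delta_\lambda g)$ is smooth and $k$-polynomial with respect to $S$. If $j_0$ is the least index with $P_{j_0}\not\equiv 0$, then for fixed $g$ one has $f(\delta_\lambda g)=\sum_j\lambda^j P_j(g)$ for small $\lambda$, so $\lambda^{-j_0}f(\delta_\lambda g)\to P_{j_0}(g)$ pointwise on $\mathbb G$ as $\lambda\to 0$. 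Applying \cref{coroll:KpolAtLimit} along a sequence $\lambda_n\to 0$, for each $X\in S$ separately, yields $X^kP_{j_0}\equiv 0$. As $f-P_{j_0}$ is again analytic and $k$-polynomial with respect to $S$, peeling off the lowest component and iterating shows that every $P_j$ is $k$-polynomial with respect to $S$. (Equivalently, $X^kP_j$ is $\delta_\lambda$-homogeneous of degree $j-k$ and $\sum_j X^kP_j=X^kf=0$ near $e$ by termwise differentiation of the analytic series, so uniqueness of the homogeneous decomposition forces $X^kP_j\equiv 0$ for every $j$.)

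The second key step bounds the growth of each $P_j$ by a degree independent of $j$. Fix the $2n$ vector fields $X_1,\dots,X_{2n}\in S$ from \cref{lem:Connecting}. Since $P_j$ is smooth and $k$-polynomial with respect to $S$, \cref{lem:EST1BIS} with $\ell=2n$ represents $(t_1,\dots,t_{2n})\mapsto P_j(\exp(t_1X_1)\cdots\exp(t_{2n}X_{2n}))$ by a polynomial of degree at most $\nu_0:=\nu(k,s,2n)$, which depends only on $k,s,n$. By \cref{lem:Connecting}, every $q\in\mathbb G$ has this form with $\abs{t_1}+\dots+\abs{t_{2n}}\le\widetilde C^{-1}\norm{q}_{\mathbb G}$, whence there is a constant $C_j$ (possibly depending on $j$) with $\abs{P_j(q)}\le C_j\,(1+\norm{q}_{\mathbb G})^{\nu_0}$ for all $q$. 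Now if $P_j\not\equiv 0$, choose $q$ with $\norm{q}_{\mathbb G}=1$ and $P_j(q)\ne 0$; evaluating the bound at $\delta_\lambda q$ and using $P_j(\delta_\lambda q)=\lambda^j P_j(q)$ and $\norm{\delta_\lambda q}_{\mathbb G}=\lambda$ gives $\lambda^j\abs{P_j(q)}\le C_j(1+\lambda)^{\nu_0}$ for all $\lambda>0$, forcing $j\le\nu_0$.

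Therefore $P_j\equiv 0$ for every $j>\nu_0$, so $f=\sum_{j=0}^{\nu_0}P_j$ on a neighbourhood of $e$; since both sides are analytic on the connected group $\mathbb G$, analytic continuation (as in \cref{prop:FiniteDim}) gives the equality everywhere, and $f$ is a polynomial in exponential chart of degree at most $\nu:=\nu_0$, depending only on $k,s,n$. I expect the first step to be the main obstacle: rigorously transferring $S$-polynomiality from $f$ to each homogeneous Taylor component requires combining analyticity (from \cref{thm:Intro1}), the degree-one homogeneity of first-layer fields, and the limit lemma \cref{coroll:KpolAtLimit}, together with a careful justification that the rescaled functions converge pointwise on all of $\mathbb G$.
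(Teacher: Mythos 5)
Your proposal is correct and takes essentially the same route as the paper's own proof: analyticity via \cref{thm:Intro1}, transfer of $k$-polynomiality to each $\delta_\lambda$-homogeneous Taylor component by a blow-up/rescaling argument combined with \cref{coroll:KpolAtLimit}, a growth bound of uniform order $\nu(k,s,2n)$ obtained from \cref{lem:EST1BIS} together with \cref{lem:Connecting}, and finally the homogeneity-versus-growth contradiction plus analytic continuation. The only cosmetic difference is that you peel off the lowest nonzero homogeneous component and rescale by $\lambda^{-j_0}$ (with pointwise convergence on all of $\mathbb G$), whereas the paper inducts on the degree by subtracting the partial sums $\sum_{i\leq d}p_i$ and rescaling by $\lambda^{-(d+1)}$ on dilated neighbourhoods; the two versions are interchangeable.
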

\begin{proof}
	We stress a little abuse of notation in this proof. For a function $g:\mathbb G\to\mathbb R$ we will write without making a distinction between $g$ and $g\circ \exp$, since when $\mathbb G$ is a Carnot group $\exp$ is a global analytic diffeomorphism. In other words we will identify $\mathbb G\equiv \mathfrak g\equiv \mathbb R^n$ by means of the exponential map $\exp$ and a choice for an adapted basis of $\mathfrak g$ that extends $S$. 
	
	From \cref{thm:Intro1} we get that $f$ is represented by an analytic function. Let us fix $\Omega$ an open neighbourhood of the identity of $\mathbb G$ on which $f$ coincides with his Taylor expansion. Thus in particular we have the following equality in the pointwise sense
	\begin{equation}\label{eqn:SERIES}
	f(t_1,\dots,t_n)=\sum_{d=0}^\infty p_d(t_1,\dots,t_n), \qquad \text{for all $(t_1,\dots,t_n)\in\Omega$},
	\end{equation}
	where $p_d(t_1,\dots,t_n)$ is the polynomial of the Taylor expansion that is $\delta_\lambda$-homogeneous with degree $d$. We claim that for every $d\geq 0$ the polynomial $p_d$ is $k$-polynomial with respect to $S$. Indeed, let us prove this statement by induction on $d$. Clearly if $d=0$ the conclusion is proved since constant functions are always $k$-polynomial with respect to $S$. Let us now assume that $p_i$ is $k$-polynomial with respect to $S$ for $i=0,\dots,d$. We want to prove that $p_{d+1}$ is $k$-polynomial with respect to $S$. First of all, by linearity of the condition of being $S$-polynomial with degree at most $k$, we get that
	$$
	g_d:=f-\sum_{i=0}^d p_i \quad \text{is $k$-polynomial with respect to $S$.}
	$$
	Since $g_d$ is $k$-polynomial with respect to $S$ we also get that $g_d\circ\delta_{\lambda}$ is $k$-polynomial with respect to $S$ for every $\lambda>0$. This last assertion comes from the iteration of the equality 
	$$
	X(g_d\circ\delta_\lambda)(p)=\frac{\de}{\de\eps}_{|_{\eps=0}}(g_d\circ\delta_\lambda)(p\exp(\eps X))=\lambda (Xg_d)(\delta_\lambda(p)), \quad \text{for all $S$, $\lambda>0$, $p\in\mathbb G$}.
	$$
	From \eqref{eqn:SERIES} we get that 
	$$
	\frac{g_d\circ\delta_\lambda}{\lambda^{d+1}}=p_{d+1}+\lambda\left(\sum_{i=d+2}^{+\infty}\lambda^{i-d-2}p_i\right)=:p_{d+1}+\lambda R_{d+1}^{\lambda}, \qquad \text{on $\delta_{\lambda^{-1}}\Omega$, for all $\lambda>0$}.
	$$
	Since $g_d\circ\delta_{\lambda}$ is a $k$-polynomial function with respect to $S$  the same is true for $(g_d\circ\delta_\lambda)/\lambda^{d+1}$, and thus for $p_{d+1}+\lambda R_{d+1}^{\lambda}$, on $\delta_{\lambda^{-1}}\Omega$, for every $\lambda>0$, since the previous equality holds. Let us fix $\Omega'$ an arbitrary open bounded set of $\mathbb G$. Since $\Omega'$ is bounded there exists $\lambda_0$ such that $\Omega'\subseteq \delta_{\lambda^{-1}}\Omega$ for all $\lambda\leq \lambda_0$. Thus, for every $\lambda\leq \lambda_0$, the function $p_{d+1}+\lambda R_{d+1}^{\lambda}$ is a $k$-polynomial function with respect to $S$ on $\Omega'$. Since $p_{d+1}+\lambda R_{d+1}^{\lambda}$ converges pointwise to $p_{d+1}$ on $\Omega'$ as $\lambda\to 0$, we can apply \cref{coroll:KpolAtLimit} to infer that $p_{d+1}$ is $k$-polynomial with respect to $S$ on $\Omega'$. Since $\Omega'$ is arbitrary we get that $p_{d+1}$ is $k$-polynomial with respect to $S$ on the entire $\mathbb G$ and thus the induction is complete.
	
	Let us now prove an independent result that will lead to the conclusion of the proof. Let us prove that there exists a constant $\nu$, which depends on $k,s,n$, such that if $f$ is an arbitrary smooth function that is $k$-polynomial with respect to $S$ on $\mathbb G$, then $f(p)=O(\|p\|_{\mathbb G}^{\nu})$ as $\|p\|_{\mathbb G}\to+\infty$, where $\|\cdot\|_{\mathbb G}$ is the homogeneous norm associated to the subRiemannian distance $d_{\mathbb G}$ induced by $S$, i.e., $\|p\|_{\mathbb G}:=d_{\mathbb G}(p,e)$, where $e$ is the identity of the group. 
	
	Indeed, as a direct application of  \cref{lem:Connecting}, we first get that there exist $\widetilde C$, and some $X_{1},\dots,X_{2n}\in S$ such that for every point $p\in \mathbb G$ we can write the following equality
	$$
	p=\exp(t_1X_{1})\dots\exp(t_{2n} X_{2n}),
	$$
	for some $t_1,\dots,t_{2n}\in\mathbb R$, and moreover the following inequality holds 
	$$
	\|p\|_{\mathbb G}\geq \widetilde C(\abs{t_1}+\dots+\abs{t_{2n}}).
	$$
	Moreover, as a direct application of \cref{lem:EST1BIS}, there exist a constant $\nu$, which depends on $s,k,n$, and a constant $C$, which may depend also on $f$, such that
	$$
	\abs{f(p)}=\abs{f(\exp(t_1X_{1})\dots\exp(t_{2n} X_{2n}))}\leq C(1+\abs{t_1}+\dots+\abs{t_{2n}})^\nu\leq C(1+\|p\|_{\mathbb G}/\widetilde C)^\nu.
	$$
	Thus $f(p)=O(\|p\|_{\mathbb G}^{\nu})$, as $\|p\|_{\mathbb G}\to+\infty$. In order to conclude the proof let us prove that in the sum \eqref{eqn:SERIES}, $p_d\equiv 0$ for all $d\geq \nu+1$. Let us fix $d\geq \nu+1$ and let us suppose by contradiction that $\max_{\{\|x\|_{\mathbb G}=1\}} p_d(x)=p_d(\overline x)=m>0$, for some $\overline x\in\mathbb G$ with $\|\overline x\|_{\mathbb G}=1$. Hence for an arbitrary $\lambda>0$, since $p_d$ is a $\delta_{\lambda}$-homogeneous polynomial of degree $d$, we have $p_d(\delta_{\lambda}\overline x)=\lambda^dp_d(\overline x)=m\lambda^d$. Hence, being $d\geq \nu+1$ and $m>0$, the previous inequality is a contradiction with the fact that $p_d(x)=O(\|x\|_{\mathbb G}^{\nu})$ as $\|x\|_{\mathbb G}\to+\infty$, which holds true since we proved that $p_d$ is $k$-polynomial with respect to $S$, and since we obtained the polynomial-growth bound above. Thus we conclude that $p_d\equiv 0$ for all $d\geq \nu+1$, and then by analytic continuation $\eqref{eqn:SERIES}$ holds everywhere on $\mathbb G$, where now the sum is taken up to $\nu$. Then $f$ is a polynomial in exponential chart. Moreover, its homogeneous degree, and thus its degree, is bounded above by $\nu$.
\end{proof}
\begin{theorem}\label{thm:MAINGENERAL}
	Let $\mathbb G$ be a simply connected nilpotent group of nilpotency step $s$, and let $S\subseteq \mathfrak g$ be a subset that Lie generates $\mathfrak g$. If $f\in\mathcal{D}'(\mathbb G)$ is a distribution that is $S$-polynomial then $f$ is a polynomial in exponential chart, see \cref{def:Polynomial}.
\end{theorem}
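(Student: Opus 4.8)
The plan is to reduce the statement to the stratified case already settled in \cref{thm:MAINCARNOT} by lifting $f$ to a free-nilpotent group. First I would use \cref{thm:Intro1}: since $S$ Lie generates $\mathfrak g$, I may replace it by a finite Lie generating subset $S=\{Y_1,\dots,Y_m\}$, and by taking the maximum of the (finitely many) exponents I may assume there is a single $k\in\mathbb N$ with $Y_i^kf\equiv 0$ for all $i$; moreover $f$ is represented by an analytic \emph{function}, so I can manipulate it pointwise. I may also assume $m\geq 2$: if $\dim\mathfrak g=1$ the conclusion is immediate, and otherwise a single vector cannot Lie generate $\mathfrak g$, forcing $\abs S\geq 2$.

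Next I would introduce the free-nilpotent group $\mathbb F_{m,s}$ with free generators $X_1',\dots,X_m'$ forming a basis $S'$ of the first layer of $\mathfrak f_{m,s}$, see \cref{def:free}. By the universal property \cref{def:free}(iii), the assignment $X_i'\mapsto Y_i$ extends to a Lie algebra homomorphism $\overline\Psi\colon\mathfrak f_{m,s}\to\mathfrak g$, and $\overline\Psi$ is \emph{surjective} because its image is a subalgebra of $\mathfrak g$ containing the Lie generating set $S$. Since $\mathbb F_{m,s}$ is simply connected, $\overline\Psi$ integrates to a (surjective) Lie group homomorphism $\pi\colon\mathbb F_{m,s}\to\mathbb G$ with $\de\pi=\overline\Psi$ and the intertwining relation $\pi\circ\exp=\exp\circ\,\overline\Psi$. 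I then set $\widetilde f:=f\circ\pi$, which is analytic on $\mathbb F_{m,s}$.

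The key transport step is to check that $\widetilde f$ is $k$-polynomial with respect to the first-layer basis $S'$. Because $\pi$ is a homomorphism with $\de\pi(X_i')=Y_i$, one has $\pi(g\exp(tX_i'))=\pi(g)\exp(tY_i)$, and hence, using the formula for iterated derivatives along flow lines in \cref{rem:Taylor}, $(X_i')^j\widetilde f=(Y_i^jf)\circ\pi$ for every $j\in\mathbb N$; in particular $(X_i')^k\widetilde f\equiv 0$ for each $i$. Thus $\widetilde f$ is $k$-polynomial with respect to a basis of the first layer of $\mathfrak f_{m,s}$, so \cref{thm:MAINCARNOT} applies on the Carnot group $\mathbb F_{m,s}$ and yields that $\widetilde f\circ\exp$ is a polynomial on $\mathfrak f_{m,s}$.

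Finally I would descend back to $\mathbb G$. From $\pi\circ\exp=\exp\circ\,\overline\Psi$ I get $\widetilde f\circ\exp=(f\circ\exp)\circ\overline\Psi$. Choosing a linear section $\sigma\colon\mathfrak g\to\mathfrak f_{m,s}$ of the surjective linear map $\overline\Psi$, so that $\overline\Psi\circ\sigma=\mathrm{id}_{\mathfrak g}$, I obtain $f\circ\exp=(\widetilde f\circ\exp)\circ\sigma$, a polynomial precomposed with a linear map, hence a polynomial on $\mathfrak g$; that is, $f$ is polynomial in exponential chart. The only genuinely substantial input is \cref{thm:MAINCARNOT}, so the hard part is entirely packaged there; the remaining delicate points here are the surjectivity of $\overline\Psi$ and the intertwining $\pi\circ\exp=\exp\circ\,\overline\Psi$, which are exactly what let me carry the polynomiality hypothesis up to the free group and push the conclusion back down.
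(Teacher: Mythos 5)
Your proposal is correct and follows essentially the same route as the paper's own proof: reduce to a finite $S$ with uniform degree $k$, lift to the free-nilpotent group $\mathbb F_{m,s}$ via the universal property (the paper's map $\phi$ is exactly your $\overline\Psi$, viewed in exponential coordinates), transport $k$-polynomiality along the generators, apply \cref{thm:MAINCARNOT}, and descend with a linear section of the surjective homomorphism. Your treatment is in fact slightly more careful than the paper's on two minor points — explicitly integrating $\overline\Psi$ to a group homomorphism $\pi$ with $\pi\circ\exp=\exp\circ\,\overline\Psi$ rather than identifying groups with algebras by abuse of notation, and justifying that one may assume $m\geq 2$ as required by \cref{def:free} — but the substance is identical.
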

\begin{proof}
	We stress a little abuse of notation in this proof. For a function $g:\mathbb G\to\mathbb R$ we will write without making a distinction between $g$ and $g\circ \exp$, since when $\mathbb G$ is a simply connected nilpotent group, $\exp$ is a global analytic diffeomorphism. In other words we identify $\mathbb G\equiv \mathfrak g\equiv \mathbb R^n$ by means of $\exp$ and a choice for a basis of $\mathfrak g$. Up to taking a subset of $S$ that is finite and still Lie generates $\mathfrak g$, we may assume that $S$ is finite, namely $S=\{X_1,\dots,X_m\}$ for some $X_1,\dots,X_m\in \mathfrak{g}$. Since now $S$ is finite, there exists $k\in\mathbb N$ such that $f$ is $k$-polynomial with respect to $S$.
	
	From \cref{thm:Intro1} we get that $f$ is represented by an analytic function. Let us consider the free Lie algebra $\mathfrak f_{m,s}$ of step $s$ and with $m$ generators $\{X_1',\dots,X_m'\}$ introduced in \cref{def:free}. By item (iii) of \cref{def:free} there exists a Lie algebra homomorphism $\phi:\mathfrak f_{m,s}\to\mathfrak g$ such that $\phi(X'_i)=X_i$ for every $1\leq i\leq m$.  We claim that $f\circ\phi$ is smooth and $k$-polynomial with respect to $\{X_1',\dots,X_m'\}$. This latter assertion is true since, first of all $f\circ\phi$ is smooth since $f$ is analytic and $\phi$ is linear, and second because, from the fact that $\phi(X'_i)=X_i$ for every $1\leq i\leq m$, we conclude that
	$$
	X'_i(f\circ\phi)=X_if\circ\phi, \qquad \forall 1\leq i\leq m,
	$$	
	and thus iterating 
	$$
	(X'_i)^k(f\circ\phi)=X_i^kf\circ\phi\equiv 0, \qquad \forall 1\leq i\leq m. 
	$$	
	Since $\mathrm{span}\{X_1',\dots,X_m'\}$ is the first layer of a stratification of $\mathfrak f_{m,s}$, we can apply \cref{thm:MAINCARNOT} and conclude that $f\circ\phi$ is a polynomial in exponential chart with degree at most $\delta'$, where $\delta'$ depends on $k,m,s$ since the topological dimension of $\mathfrak f_{m,s}$ is bounded above by a function of $m$ and $s$.  Since $\{X_1,\dots,X_m\}$ Lie generates $\mathfrak g$, since $\phi(X'_i)=X_i$ for every $1\leq i\leq m$, and since $\phi$ is a Lie algebra homomorphism, we get that $\phi$ is surjective. Hence there exists a linear map $\phi^{-1}:\mathfrak g\to\mathfrak f_{m,s}$ such that $\phi\circ \phi^{-1}=\mathrm{id}_{|_{\mathfrak g}}$. Thus $f=(f\circ\phi)\circ\phi^{-1}$ is a polynomial in exponential chart since it is the composition of a polynomial in exponential chart with a linear map. Notice that the degree of $f$ in exponential chart is at most $\delta'$ since $\phi$ is linear.
\end{proof}
\begin{proof}[Proof of {\cref{thm:Intro2}}]\label{proofTHMIntro1} From \cite[Theorem 3.6.1]{Varadarajan} we deduce that there exists a unique simply connected nilpotent Lie group $\mathbb G'$ with Lie algebra $\mathfrak g$, and $\mathbb G$ is the quotient of $\mathbb G'$ with one central discrete subgroup $\Gamma$ of $\mathbb G'$. Let $\pi:\mathbb G'\to \mathbb G'\mathrel{/}\Gamma \simeq \mathbb G$ be the projection map, which is open. Then $\pi_*:\mathfrak g\to\mathfrak g$ is surjective, and hence a bijection. We claim that the map $f\circ \pi$ is $(\pi_*)^{-1}(S)$-polynomial in $\mathbb G'$. Indeed, for every $X\in\mathfrak (\pi_*)^{-1}(S)$ we have 
	\begin{equation}\label{eqn:IterativelyApplying}
	X(f\circ \pi)=(\pi_*X)f\circ \pi,
	\end{equation}
	and thus iterating and using that $f$ is $S$-polynomial we get the sought claim. Hence $f\circ\pi\circ\exp_{\mathbb G'}$ is a polynomial, according to \cref{thm:MAINGENERAL}, since $(\pi_*)^{-1}(S)$ Lie generates $\mathfrak g$ as well. But since $f\circ\pi\circ\exp_{\mathbb G'}=f\circ \exp_{\mathbb G}\circ\pi_*$, and since $\pi_*$ is a bijection, we get that $f\circ\exp_{\mathbb G}$ is a polynomial as well, and then we are done. 	
\end{proof}
\begin{remark}[The constant $\nu$ in \cref{thm:MAINCARNOT}]
	We stress that from the proof of \cref{thm:MAINCARNOT} we infer that the homogeneous degree of $f$, and thus also the degree of $f$, in the exponential chart is at most $\nu(k,s,2n)$, where $\nu$ is explicitly provided in the proof of \cref{lem:EST1BIS}, see \eqref{eqn:TheConstants}. Thus the constant $\nu$ in \cref{thm:MAINCARNOT} can be taken to be $\nu(k,s,2n)$. This in particular gives, in case $\mathbb G$ is connected and nilpotent, an explicit bound on the degree of the polynomial in exponential chart that represents a distribution $f$ that is $k$-polynomial with respect to a Lie generating $S$, see the proofs of \cref{thm:MAINGENERAL} and \cref{thm:Intro2}.
\end{remark}
\begin{remark}[Relaxation of the hypotheses in \cref{thm:Intro2}]
	The hypothesis of $f\in\mathcal{D}'(\mathbb G)$ being polynomial with respect to $S$ in \cref{thm:Intro2} can be relaxed to the following one: for every $X\in S$ there exists $k\in\mathbb N$ and a polynomial in exponential chart $g$ such that $X^k f = g$ in the distributional sense on $\mathbb G$. Indeed, if this is the case, there exists a finite subset $\{X_1,\dots,X_m\}\subseteq S$ that Lie generates $\mathfrak g$ and such that $X_i^{k_i}f=g_i$ for $1\leq i\leq m$, where $k_i\in\mathbb N$ and $g_i$ are polynomials in exponential chart. Thus, taking \cref{prop:POLYKPOLY} into account, there exists a sufficiently large $k\in\mathbb N$ such that $f$ is $k$-polynomial with respect to $\{X_1,\dots,X_m\}$, and then we can use \cref{thm:Intro2}.
\end{remark}
\begin{remark}[$S$-polynomial implies $\mathfrak g$-polynomial]\label{rem:Simpliesg}
	Let us further notice the following non-obvious fact, which is a consequence of \cref{thm:Intro2} and \cref{prop:POLYKPOLY}. If $\mathbb G$ is a connected nilpotent Lie group and $S$ Lie generates $\mathfrak g$, then if $f\in\mathcal{D}'(\mathbb G)$ is $S$-polynomial, $f$ is $\mathfrak g$-polynomial, with a degree of polynomiality $k$ uniform with respect to $X\in\mathfrak g$, but that may eventually depend on $f$. This latter assertion is true since if $f$ is $S$-polynomial, then \cref{thm:Intro2} tells us that $f$ is a polynomial in exponential chart and thus we can apply \cref{prop:POLYKPOLY}.
\end{remark}
\begin{remark}
The example in \eqref{ex:PositiveAffine} shows that our \cref{thm:Intro2} is sharp in the class of connected nilpotent Lie groups.
\end{remark}
\begin{remark}
	In case $\mathbb G$ is simply connected and nilpotent, $\exp$ is a global analytic diffeomorphism and so the class of polynomial distributions à la Leibman on $\mathbb G$ coincides, up to identifying $\mathbb G\equiv \mathfrak g$, with the class of polynomials on $\mathbb R^n$, where $n$ is the topological dimension of $\mathbb G$. If $\mathbb G$ is nilpotent but not necessarily simply connected it might happen that the class of polynomial trivializes: for example it is readily seen that the polynomials on a torus $\mathbb S^1\times \mathbb S^1$ are just the constant functions.
\end{remark}

\section{Relations between various notions of polynomial}\label{sec:App}
In this section we shall prove \cref{prop:Equ1Intro} and \cref{prop:Equ2Intro}.

\subsection{Proof of \cref{prop:Equ1Intro}}
Let us now prove \cref{prop:Equ1Intro}, that is, let us prove that our definition of polynomial distribution à la Leibman, see \cref{rem:LeiBravo}, is consistent with our definition of polynomial distribution, see \cref{def:PolyGeneral}, on every connected Lie group. We first prove this equivalence on smooth functions, and then conclude by using convolutions with smooth kernels.

\begin{proposition}\label{rem:PolyLeib}
	Let $\mathbb G$ be a connected Lie group and let $\phi:\mathbb G\to\mathbb R$ be a smooth function. Then $\phi$ is a polynomial with degree at most $d$ à la Leibman, see \eqref{eqn:gi}, if and only if for every $X_1,\dots,X_{d+1}\in\mathfrak g$ we have 
	\begin{equation}\label{eqn:POLYDISTR}
	X_1\dots X_{d+1}\phi\equiv 0, \qquad \text{on $\mathbb G$}.
	\end{equation}
\end{proposition}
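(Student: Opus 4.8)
The plan is to prove the two implications separately, thinking of the differential condition \eqref{eqn:POLYDISTR} as saying that $\phi$ has \emph{degree at most $d$}, and running an induction on $d$ that relates the operators $D_g=R_g^\ast-\mathrm{Id}$ (where $R_g^\ast\psi:=\psi\circ R_g$, so $(R_g^\ast\psi)(x)=\psi(xg)$) to left-invariant differentiation. The base case $d=0$ is the elementary equivalence ``$\phi$ constant $\Leftrightarrow$ $X\phi\equiv 0$ for all $X\in\mathfrak g$ $\Leftrightarrow$ $D_g\phi\equiv 0$ for all $g$'', where connectedness of $\mathbb G$ is used.

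For \eqref{eqn:gi} $\Rightarrow$ \eqref{eqn:POLYDISTR} I would specialize to one-parameter subgroups. Setting $g_i=\exp(t_iX_i)$ and $F(t_1,\dots,t_{d+1}):=\phi(x\exp(t_1X_1)\cdots\exp(t_{d+1}X_{d+1}))$, the expansion of $D_{g_1}\cdots D_{g_{d+1}}\phi(x)$ as an alternating sum over subsets gives the identity
\[
D_{\exp(t_1X_1)}\cdots D_{\exp(t_{d+1}X_{d+1})}\phi(x) = (\delta_1\cdots\delta_{d+1}F)(t),
\]
where $\delta_iG:=G-G|_{t_i=0}$ is the finite difference ``from $0$'' in the $i$-th slot (setting $t_i=0$ simply deletes the $i$-th factor). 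Since $\delta_i$ removes only the part independent of $t_i$, one has $\partial_{t_i}\delta_i=\partial_{t_i}$, and as operators in distinct variables commute, $\partial_{t_1}\cdots\partial_{t_{d+1}}(\delta_1\cdots\delta_{d+1}F)=\partial_{t_1}\cdots\partial_{t_{d+1}}F$. Evaluating at $t=0$ and using $(X_1\cdots X_{d+1}\phi)(x)=\partial_{t_1}\cdots\partial_{t_{d+1}}F|_{t=0}$ (cf. \cref{rem:Taylor}) turns the hypothesis into \eqref{eqn:POLYDISTR}.

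For the converse \eqref{eqn:POLYDISTR} $\Rightarrow$ \eqref{eqn:gi}, the heart of the matter is a \emph{connecting lemma}: if $\phi$ has degree at most $d$, then $D_g\phi$ has degree at most $d-1$ for every $g\in\mathbb G$. Granting it, the induction closes at once, since $D_{g_{d+1}}\phi$ has degree at most $d-1$, hence by the inductive hypothesis satisfies $D_{g_1}\cdots D_{g_d}(D_{g_{d+1}}\phi)\equiv 0$, and letting $g_{d+1}$ vary gives \eqref{eqn:gi}. To prove the connecting lemma I would use the commutation rule $X R_g^\ast=R_g^\ast\,\mathrm{Ad}_{g^{-1}}X$, coming from $x\exp(sX)g=xg\exp(s\,\mathrm{Ad}_{g^{-1}}X)$, which iterates to $X_1\cdots X_d R_g^\ast=R_g^\ast(\mathrm{Ad}_{g^{-1}}X_1)\cdots(\mathrm{Ad}_{g^{-1}}X_d)$. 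Because any $d$-fold product of left-invariant fields applied to a degree-$\le d$ function is killed by one further field, it is constant; writing $\sigma_d(Z_1,\dots,Z_d):=Z_1\cdots Z_d\phi$ for this ``top symbol'', one gets
\[
X_1\cdots X_d(D_g\phi)=\sigma_d(\mathrm{Ad}_{g^{-1}}X_1,\dots,\mathrm{Ad}_{g^{-1}}X_d)-\sigma_d(X_1,\dots,X_d),
\]
so the lemma reduces to the $\mathrm{Ad}$-invariance of $\sigma_d$.

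The $\mathrm{Ad}$-invariance of $\sigma_d$ is the step I expect to be the main obstacle, since $D_g\phi$ has no \emph{a priori} reason to drop a degree; I would establish it infinitesimally. Differentiating $g\mapsto\sigma_d(\mathrm{Ad}_gX_1,\dots,\mathrm{Ad}_gX_d)$ along $\exp(tV)$ produces a sum $\sum_i\sigma_d(\dots,[V,W_i],\dots)$ with $W_i:=\mathrm{Ad}_gX_i$, and the telescoping operator identity
\[
\sum_{i=1}^{d} W_1\cdots[V,W_i]\cdots W_d = VW_1\cdots W_d - W_1\cdots W_d V
\]
shows this sum equals $\big(VW_1\cdots W_d\phi-W_1\cdots W_dV\phi\big)(e)$, which vanishes because both terms are $(d+1)$-fold products annihilating $\phi$. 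As $\mathbb G$ is connected, the vanishing of all these directional derivatives at every point forces $g\mapsto\sigma_d(\mathrm{Ad}_gX_\bullet)$ to be constant, giving the invariance and hence the connecting lemma. This completes the induction and the equivalence for smooth $\phi$.
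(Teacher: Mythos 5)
Your proof is correct, and on the harder implication it takes a genuinely different route from the paper's. For \eqref{eqn:gi} $\Rightarrow$ \eqref{eqn:POLYDISTR} the two arguments are cousins: the paper iterates limits of difference quotients $t^{-1}D_{\exp(tX_{d+1})}\phi\to X_{d+1}\phi$, using that each $D_g$ is continuous under pointwise convergence, whereas you use the exact combinatorial identity $D_{\exp(t_1X_1)}\cdots D_{\exp(t_{d+1}X_{d+1})}\phi(x)=(\delta_1\cdots\delta_{d+1}F)(t)$ (both sides being the same alternating sum over subsets, with factors in increasing order) and then annihilate the $\delta_i$'s with mixed partials; both are elementary and correct. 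The converse is where you diverge. The paper's induction needs $\phi$ to be analytic, imported from \cref{thm:Intro1}, plus an analytic-continuation step to pass from $g_{d+1}\in\exp(\mathfrak g)$ to arbitrary $g_{d+1}$, the integral formula $D_{\exp(Y)}\phi(x)=\int_0^1(Y\phi)(x\exp(tY))\de t$, stability of $\mathscr{P}^{d}_{\mathcal{D}}$ under $R_{\exp(Y)}$, and closedness of $\mathscr{P}^{d-1}_{\mathcal{L}}$ under pointwise limits. You instead prove the stronger, purely algebraic degree-drop lemma --- $D_g\phi\in\mathscr{P}^{d-1}_{\mathcal{D}}$ for \emph{every} $g\in\mathbb G$, not only $g$ in the image of $\exp$ --- via the commutation $X_1\cdots X_dR_g^\ast=R_g^\ast(\mathrm{Ad}_{g^{-1}}X_1)\cdots(\mathrm{Ad}_{g^{-1}}X_d)$ together with the $\mathrm{Ad}$-invariance of the constant top symbol $\sigma_d$, which your telescoping identity reduces to the vanishing of two $(d+1)$-fold derivatives of $\phi$; connectedness enters exactly where it should, to conclude that $Z_1\cdots Z_d\phi$ and $g\mapsto\sigma_d(\mathrm{Ad}_gX_1,\dots,\mathrm{Ad}_gX_d)$ are constant (for the latter, note that the tangent vectors at $g$ of the curves $t\mapsto\exp(tV)g$, $V\in\mathfrak g$, span $T_g\mathbb G$, so your directional derivatives suffice). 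What your route buys: it is self-contained at the level of smooth functions, decoupling this proposition from \cref{thm:Intro1} entirely, and it sidesteps the non-surjectivity of $\exp$ on general connected Lie groups, so no analytic continuation is needed; it also isolates the clean principle that the discrete difference $D_g$ lowers differential degree by one. What the paper's route buys is economy within its own framework, since analyticity is available there anyway and the right-translation and convolution mechanisms are reused elsewhere; but as a stand-alone proof of this equivalence, your argument is leaner.
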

\begin{proof}
	First, let us prove that \eqref{eqn:gi} implies that whenever $X_1,\dots,X_{d+1}\in\mathfrak g$ then \\ $X_1\dots X_{d+1}\phi \equiv 0$ on $\mathbb G$. In order to show this, let us notice that, defined $g_{d+1}(t):=\exp(tX_{d+1})$, then $t^{-1}D_{g_{d+1}(t)}\phi$ converges pointwise on $\mathbb G$ to $X_{d+1}\phi$ as $t\to 0$. Moreover, for every $g\in\mathbb G$, the operator $D_g$ is continuous with respect to the pointwise convergence of functions, i.e., if $\phi_n\to\phi$ pointwise on $\mathbb G$ as $n\to+\infty$, then $D_g\phi_n\to D_g\phi$ pointwise on $\mathbb G$ as $n\to+\infty$, for every $g\in\mathbb G$. By also using that $D_g(\lambda\phi)=\lambda D_g(\phi)$ for every $g\in\mathbb G$ and $\lambda\in\mathbb R$, and putting $g_{d+1}(t)$ in \eqref{eqn:gi} we get 
	$$
	D_{g_1}\dots D_{g_d}(t^{-1}D_{g_{d+1}(t)}\phi)\equiv 0, \quad \text{for all $t>0$ on $\mathbb G$} \Rightarrow D_{g_1}\dots D_{g_{d}}X_{d+1}\phi\equiv 0,
	$$  
	for every $g_1,\dots,g_{d}\in \mathbb G$, where in the previous conclusion we are taking $t\to 0$ and we are exploiting the continuity of the operators $D_g$ with respect to the pointwise convergence. Now if we iterate the argument with $X_{d+1}\phi$ instead of $\phi$, we obtain
	$$
	X_1\dots X_{d+1}\phi\equiv 0, \qquad \text{on $\mathbb G$},
	$$
	which is what we wanted. 
	
	Regarding the opposite direction, let us now denote $\mathscr P^d_{\mathcal{L}}$ the vector space of smooth functions on $\mathbb G$ that are polynomials à la Leibman with degree at most $d$, see \eqref{eqn:gi}. Let us denote $\mathscr{P}^d_{\mathcal{D}}$ the vector space of smooth functions $\phi$ such that for every $X_1,\dots,X_{d+1}\in\mathfrak g$ we have $X_1\dots X_{d+1}\phi\equiv 0$ on $\mathbb G$.
	Let us now prove the following statement
	\begin{equation}\label{eqn:InductionOnD}
	\begin{split}
	&\text{for every $d\in\{0,1,\dots\}$, $\mathscr P^d_{\mathcal{L}}=\mathscr P^d_{\mathcal{D}}$, and for every $\phi\in \mathscr P^d_{\mathcal{D}}$ and every $Y\in\mathfrak g$} \\  
	&\text{the map $x \xmapsto{\phi\circ R_{\exp(Y)}} \phi(x\exp(Y))$ defined on $\mathbb G$ is in $\mathscr P^d_{\mathcal{D}}$,}
	\end{split}
	\end{equation}
	where we recall that $R_g$ stands for the right translation by $g\in\mathbb G$. It is readily seen that the first part of the previous statement proves the proposition. Let us prove \eqref{eqn:InductionOnD} by induction on $d$.
	
	If $d=0$, one readily sees that $\mathscr P^0_{\mathcal{L}}=\mathscr{P}^0_{\mathcal{D}}$ and they agree with the set of constant functions on $\mathbb G$. Thus the statement \eqref{eqn:InductionOnD} is verified for $d=0$. Let us now assume that \eqref{eqn:InductionOnD} is true for $d-1$, with $d\geq 1$, and let us prove it true for $d$. Let us start from the second part of the statement \eqref{eqn:InductionOnD}. Let us fix $Y\in\mathfrak g$ and $\phi\in\mathscr P^d_{\mathcal{D}}$. We have, for $x\in\mathbb G$, and $X\in\mathfrak g$,
	\begin{equation}\label{eqn:Rightpoly}
	\begin{split}
	X(\phi\circ R_{\exp(Y)})(x)&=\frac{\de}{\de\varepsilon}_{|_{\varepsilon=0}} \phi\circ R_{\exp(Y)}(x\exp(\varepsilon X)) = \\
	&= \frac{\de}{\de\varepsilon}_{|_{\varepsilon=0}} \phi(x\exp(Y)\exp(-Y)\exp(\varepsilon X)\exp(Y))= \\
	&=\left(\mathrm{Ad}_{\exp(-Y)}(X)\phi\right)(x\exp(Y))=\mathrm{Ad}_{\exp(-Y)}(X)\phi\circ R_{\exp(Y)}(x).
	\end{split}
	\end{equation}
	Since $\phi\in\mathscr P^d_{\mathcal{D}}$ we get that $\mathrm{Ad}_{\exp(-Y)}(X)\phi$ is in $\mathscr{P}^{d-1}_{\mathcal{D}}$ by the definition of $\mathscr{P}^d_{\mathcal{D}}$. Thus by the inductive hypothesis in the second part of the statement \eqref{eqn:InductionOnD} we get that $\mathrm{Ad}_{\exp(-Y)}(X)\phi\circ R_{\exp(Y)}\in\mathscr{P}^{d-1}_{\mathcal{D}}$. Thus by \eqref{eqn:Rightpoly} we get that $X(\phi\circ R_{\exp(Y)})\in\mathscr P^{d-1}_{\mathcal{D}}$ and then, by arbitrariness of $X$, we conclude $\phi\circ R_{\exp(Y)}\in\mathscr P^{d}_{\mathcal{D}}$. This conclude the induction for the second part of \eqref{eqn:InductionOnD}. Let us now complete the induction by proving the first part of \eqref{eqn:InductionOnD} with $d\geq 1$, assuming it is true for $d-1$.
	
	First of all, the first argument in the proof of this proposition shows that $\mathscr{P}_{\mathcal{L}}^d\subseteq \mathscr{P}_{\mathcal{D}}^d$. Let us prove $\mathscr{P}_{\mathcal{L}}^d\supseteq \mathscr{P}_{\mathcal{D}}^d$. Take $\phi\in \mathscr{P}_{\mathcal{D}}^d$. From \cref{thm:Intro1} we conclude that $\phi$ is analytic. In order to prove that $\phi\in\mathscr{P}^d_{\mathcal{L}}$ we claim that it suffices to prove that 
	\begin{equation}\label{eqn:StrengthenedAnalytic}
	D_{g_1}\cdots D_{g_d}D_{\exp(Y)}\phi\equiv 0, \qquad \text{on $\mathbb G$},
	\end{equation}
	for every $g_1,\dots,g_d\in\mathbb G$ and every $Y\in \mathfrak g$. Indeed, the map $(g_1,\dots,g_d,g_{d+1},g)\mapsto (D_{g_1}\dots D_{g_{d}}D_{g_{d+1}}\phi)(g)$ is analytic from $\mathbb G^{d+2}$ to $\mathbb R$, since $\phi$ and the group operation $\cdot$ are analytic, recall \eqref{eqn:DefinitionDg}. Thus, since the set $\{(g_1,\dots,g_d,g,\exp(Y)):g_1,\dots,g_d,g\in\mathbb G, Y\in\mathfrak g\}$ contains an open neighbourhood of the identity in $\mathbb G^{d+2}$, if we show \eqref{eqn:StrengthenedAnalytic} we are done by analytic continuation. Let us prove \eqref{eqn:StrengthenedAnalytic}.
	
	Since $\phi \in \mathscr P^d_{\mathcal{D}}$, we get that $Y\phi \in \mathscr P^{d-1}_{\mathcal{D}}$. We also have 
	\begin{equation}\label{eqn:LAFINALE}
	D_{\exp(Y)}\phi(x)=\phi(x\exp(Y))-\phi(x)=\int_0^1 (Y\phi)(x\exp(tY))\de t, \qquad \text{for all $x\in\mathbb G$},
	\end{equation}
	and $(Y\phi)\circ R_{\exp(tY)}\in \mathscr{P}^{d-1}_{\mathcal{D}}$ for every $t\in[0,1]$, since $Y\phi\in\mathscr{P}^{d-1}_{\mathcal{D}}$ and since the second part of \eqref{eqn:InductionOnD}, which we already proved, holds. Thus, by the inductive hypothesis, $(Y\phi)\circ R_{\exp(tY)}\in \mathscr{P}^{d-1}_{\mathcal{L}}$ for every $t\in[0,1]$, and since $\mathscr{P}^{d-1}_{\mathcal{L}}$ is a vector space closed under pointwise convergence, we finally get that $D_{\exp(Y)}\phi \in\mathscr{P}^{d-1}_{\mathcal{L}}$ by exploiting \eqref{eqn:LAFINALE}. By the definition of $\mathscr{P}^{d-1}_{\mathcal{L}}$ the latter conclusion implies \eqref{eqn:StrengthenedAnalytic}, and thus the induction, and the proof, are concluded. 
\end{proof} 	
\begin{proof}[Proof of {\cref{prop:Equ1Intro}}]\label{proof:EquIntro1}
	Let $f$ be a distribution such that there exists $d\in\mathbb N$ for which $X_1\dots X_{d+1}f\equiv 0$ in the sense of distributions on $\mathbb G$ whenever $X_1,\dots,X_{d+1}\in\mathfrak g$. In particular, the distribution $f$ is $\mathfrak g$-polynomial with degree at most $d+1$ and thus, from \cref{thm:Intro1}, we deduce that it is represented by an analytic function. Since in particular $f$ is represented by a smooth function we can thus use \cref{rem:PolyLeib} to obtain that $f$ is a polynomial distribution à la Leibman with degree at most $d$.
	
	Viceversa, let $f$ be a distribution that is polynomial à la Leibman with degree at most $d$. Let $\{\phi_{n}\}_{n\in\mathbb N}$ be an approximate identity. Then, iteratively applying \eqref{eqn:DgConvolution}, the convolution $\phi_n\ast f$ is a smooth function that is polynomial à la Leibman with degree at most $d$. Thus, from \cref{rem:PolyLeib}, we get that for every $n\in\mathbb N$ and every $X_1,\dots,X_{d+1}\in\mathfrak g$ we have 
	$$
	0\equiv X_1\dots X_{d+1}(\phi_n\ast f)=\phi_{n}\ast X_1\dots X_{d+1}f, \qquad \text{on $\mathbb G$},
	$$
	where in the second equality we are iteratively applying \eqref{rem:DerivativeOfAConvolution}. Thus, letting $n\to+\infty$ in the previous equality, since $\{\phi_{n}\}_{n\in\mathbb N}$ is an approximate identity we get that $X_1\dots X_{d+1}f\equiv 0$ in the sense of distributions on $\mathbb G$ whenever $X_1,\dots,X_{d+1}\in\mathfrak g$, concluding the proof of the equivalence. 
	
	The fact that $f$ is represented by an analytic function and the conclusion about the finite dimension of the vector space of the polynomial distributions with degree at most $d\in\mathbb N$ à la Leibman is now a direct consequence of \cref{thm:Intro1}. 
	
	Let us prove the final part of the statement of \cref{prop:Equ1Intro}. Let us recall that the lower central series is defined inductively as follows: $\mathfrak g_0:=\mathfrak g$, and $\mathfrak g_{k}:=[\mathfrak g,\mathfrak g_{k-1}]$ for every $k\geq 1$.
	We now prove that the distributions $f$ that are polynomial à la Leibman with degree at most $d\in\mathbb N$ are invariant along the directions of $\mathfrak g_d$, namely for every $X\in\mathfrak g_d$ we have $Xf=0$ on $\mathbb G$. 
	
	Indeed, for every $d\geq 0$, every element of $\mathfrak g_d$ can be written as a linear combination of left-invariant operators that are the composition of at least $d+1$ left-invariant vector fields. To see the latter property we proceed by induction on $d$. The case $d=0$ is true by definition, so let us suppose the assertion true for $d\geq 0$ and prove it for $d+1$. If $X\in\mathfrak g_{d+1}=[\mathfrak g,\mathfrak g_d]$, then for some $n\in\mathbb N$, $X=\sum_{i=1}^nc_{i}[X_{i},Y_{i}]$, where $X_i\in\mathfrak g$ and $Y_i\in\mathfrak g_{d}$. By the inductive step, for every $1\leq i\leq n$ there exists $j_i\in\mathbb N$ such that for every $1\leq k\leq j_i$ there exists integers $m_{i,k}\geq d+1$ and real numbers $\beta_{i,k}$ such that $Y_i=\sum_{k=1}^{j_i}\beta_{i,k} X_{i,k,1}\dots X_{i,k,m_{i,k}}$, with $X_{i,k,1},\dots,X_{i,k,m_{i,k}}\in\mathfrak g$. Thus expanding the commutator in the equality $X=\sum_{i=1}^nc_{i}[X_{i},Y_{i}]$ and by using the previous equalities on $Y_i$ we conclude. As a consequence, since every polynomial distribution à La Leibman with degree at most $d\in\mathbb N$ is such that $X_1\dots X_{d+1}f=0$ on $\mathbb G$ for every $X_1,\dots,X_{d+1}\in\mathfrak g$, see \cref{prop:Equ1Intro}, and since every $X\in\mathfrak g_d$ can be written as a linear combination of left-invariant operators that are composition of at least $d+1$ elements of $\mathfrak g$, we conclude that $Xf=0$ for every $X\in\mathfrak g_d$. 
	
	Thus, since the nilpotent residual $\mathfrak g_{\infty}$ equals the intersection $\cap_{k\in\mathbb N} \mathfrak g_k$, the previous reasoning shows that every distribution $f$ that is polynomial à la Leibman of an arbitrary degree on $\mathbb G$ is $\mathfrak g_{\infty}$-invariant, namely $Xf=0$ for every $X\in\mathfrak g_{\infty}$. As a consequence we conclude that every polynomial à la Leibman on a connected Lie group passes to the quotient to a polynomial à la Leibman on the nilpotent group $\mathbb G\mathrel{/}\mathbb G_{\infty}$, where $\mathbb G_{\infty}$ is the closure of the unique connected (and normal, since $\mathfrak g_{\infty}$ is an ideal) Lie subgroup of $\mathbb G$ with Lie algebra $\mathfrak g_{\infty}$.
\end{proof}
\begin{remark}[A variant of the first part of \cref{prop:Equ1Intro}]
	We notice that the proofs provided for the first part of \cref{prop:Equ1Intro} and for \cref{rem:PolyLeib} can be exploited, with very little modifications,  to prove the following variant of the first part of \cref{prop:Equ1Intro}. Let $\phi$ be a distribution on $\mathbb G$, a connected Lie group with Lie algebra $\mathfrak g$, and let $S\subseteq \mathfrak g$ be an $\mathrm{Ad}$-closed cone, i.e., for every $X\in S$ then $tX\in S$ for every $t\in\mathbb R$, and for every $X,Y\in S$ we have $\mathrm{Ad}_{\exp(X)}Y\in S$. Then \eqref{eqn:POLYDISTR} holds for every $X_1,\dots,X_{d+1}\in S$ if and only if \eqref{eqn:gi} holds for every $g_1,\dots,g_{d+1}\in\exp S$.
\end{remark}
%\begin{remark}[Invariance of Leibman polynomials with respect to the nilpotent residual]\label{rem:InvarianceNilpotentResidual}
%\end{remark}

%\begin{remark}(Comparison of \cref{def:PolyGeneral} and polynomial maps in \cite{Lei02})
%	With the same argument at the beginning of \cref{rem:PolyLeib} we observe that if a {\color{red} smooth} function $\phi:\mathbb G\to\mathbb R$ is a polynomial map according to \cite[Section 0.2]{Lei02}, then it is a polynomial according to \cref{def:PolyGeneral}.\footnote{GA:{\color{blue}Discuti il converse, come in quel remark. Per ora non so se è vero il converse.}.}
%\end{remark}
\subsection{Proof of \cref{prop:Equ2Intro}}
The following proposition shows that when $\mathbb G$ is a connected nilpotent Lie group, a polynomial in exponential chart, see \cref{def:Polynomial}, is $k$-polynomial with respect to $\mathfrak g$ for some $k\in\mathbb N$. This is the last main step in order to obtain \cref{prop:Equ2Intro}.
\begin{proposition}\label{prop:POLYKPOLY}
	Let $\mathbb G$ be a connected nilpotent Lie group and let $f\colon\mathbb G\to\mathbb R$ be a polynomial in exponential chart on $\mathbb G$, see \cref{def:Polynomial}. Then there exists $k_0\in\mathbb N$ such that for every $Y_1,\dots,Y_{k_0}\in\mathfrak g$ we have $Y_1\dots Y_{k_0}f\equiv 0$ on $\mathbb G$. 
\end{proposition}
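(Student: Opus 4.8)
The plan is to pass to exponential coordinates and exploit a grading of the polynomial algebra adapted to the lower central series, with respect to which every left-invariant vector field strictly lowers degree. First I would reduce to basis directions: fix a basis $\{X_1,\dots,X_n\}$ of $\mathfrak g$; since the map $(Y_1,\dots,Y_{k_0})\mapsto Y_1\cdots Y_{k_0}f$ is $\mathbb R$-multilinear in the $Y_i$, it suffices to prove $X_{j_1}\cdots X_{j_{k_0}}f\equiv 0$ for every choice of indices $j_1,\dots,j_{k_0}\in\{1,\dots,n\}$. I would choose the basis adapted to the lower central series $\mathfrak g=\mathfrak g_0\supseteq\mathfrak g_1\supseteq\cdots$, so that each $X_i$ has a well-defined \emph{weight} $w_i\in\{1,\dots,s\}$ characterized by $X_i\in\mathfrak g_{w_i-1}\setminus\mathfrak g_{w_i}$, and so that $\{X_l : w_l\ge a+1\}$ is a basis of $\mathfrak g_a$ for every $a$. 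Identifying $\mathbb G$ with $\mathfrak g\simeq\mathbb R^n$ through $\exp$ and these coordinates, I assign to the monomial $x_1^{\alpha_1}\cdots x_n^{\alpha_n}$ the \emph{weighted degree} $\sum_i \alpha_i w_i$, and I write $F:=f\circ\exp$, which by \cref{def:Polynomial} is a polynomial on $\mathbb R^n$, hence of some finite maximal weighted degree $D$.

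The core step is to show that, in these coordinates, each $X_j$ acts as a differential operator lowering the maximal weighted degree by at least $1$. Using the Baker--Campbell--Hausdorff identity $\exp(\xi)\exp(tX_j)=\exp(\mathrm{BCH}(\xi,tX_j))$, which is a polynomial identity valid for all $\xi$ since $\mathbb G$ is connected nilpotent, for $\xi=\sum_m x_mX_m$ I would compute
\[
(X_j g)(\exp\xi)=\frac{\de}{\de t}_{|_{t=0}}(g\circ\exp)(\mathrm{BCH}(\xi,tX_j))=\big(DF(\xi)\big)\Big[\sum_{k\ge 0}\tfrac{B_k}{k!}(\mathrm{ad}_\xi)^kX_j\Big],
\]
so that $X_j$ corresponds to the operator $\mathcal X_j=\sum_i P_{j,i}(x)\,\partial_{x_i}$, where $P_{j,i}$ collects the $X_i$-components of the brackets $(\mathrm{ad}_\xi)^kX_j=\sum x_{m_1}\cdots x_{m_k}[X_{m_1},[\dots,[X_{m_k},X_j]]]$. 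The key bookkeeping is that the iterated bracket $[X_{m_1},[\dots,[X_{m_k},X_j]]]$ lies in $\mathfrak g_{(w_{m_1}+\dots+w_{m_k}+w_j)-1}$, hence has a nonzero $X_i$-component only if $w_i\ge w_{m_1}+\dots+w_{m_k}+w_j$; consequently the monomial $x_{m_1}\cdots x_{m_k}$ attached to that component has weighted degree $w_{m_1}+\dots+w_{m_k}\le w_i-w_j$. Thus $P_{j,i}$ is a sum of monomials of weighted degree $\le w_i-w_j$, and $P_{j,i}\partial_{x_i}$ sends a monomial of weighted degree $\delta$ to one of weighted degree $\le (w_i-w_j)+(\delta-w_i)=\delta-w_j\le\delta-1$. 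Therefore $\mathcal X_j$ strictly decreases the weighted degree.

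It then follows immediately that $\mathcal X_{j_1}\cdots\mathcal X_{j_{k_0}}F$ has weighted degree $\le D-k_0$, which is negative, hence the polynomial is $0$, as soon as $k_0=D+1$. Since iterating $(X g)\circ\exp=\mathcal X(g\circ\exp)$ gives $(X_{j_1}\cdots X_{j_{k_0}}f)\circ\exp=\mathcal X_{j_1}\cdots\mathcal X_{j_{k_0}}F$ and $\exp$ is surjective, this yields $X_{j_1}\cdots X_{j_{k_0}}f\equiv 0$ on $\mathbb G$ for all index choices, and multilinearity gives $Y_1\cdots Y_{k_0}f\equiv 0$ for all $Y_i\in\mathfrak g$ with $k_0:=D+1$. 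I expect the main obstacle to be precisely the weighted bookkeeping of the second step: one must verify that the coordinate coefficients of left-invariant vector fields are supported in the correct (degree-lowering) range, which hinges on choosing the basis compatibly with the lower central series and on the inclusion $[\mathfrak g_{a-1},\mathfrak g_{b-1}]\subseteq\mathfrak g_{a+b-1}$. The remaining points—the multilinear reduction, the global validity of BCH in the nilpotent case, and surjectivity of $\exp$—are routine.
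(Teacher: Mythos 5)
Your proof is correct, and it takes a genuinely different route from the paper's. The paper first reduces to the simply connected case by passing to the universal cover, and then does not analyze $\mathbb G$ directly: it lifts $f$ through a surjective Lie algebra homomorphism $\phi\colon\mathfrak f_{n,s}\to\mathfrak g$ from the free-nilpotent algebra of \cref{def:free}. On the free group, which is stratified, each generator $X_i'$ acts in exponential coordinates as an operator of homogeneous degree $-1$ (quoted from \cite[Proposition 1.26]{FS82}), so any string of $k_0$ generator derivatives annihilates the polynomial $f\circ\phi$ once $k_0$ exceeds its homogeneous degree, and surjectivity of $\phi$ transports the conclusion back to $\mathbb G$; finally one expands arbitrary $Y_i\in\mathfrak g$ in the basis, as you do. You instead work intrinsically on $\mathbb G$: since a general nilpotent algebra carries no stratification, you replace the grading by the weight filtration of the lower central series and prove the degree-lowering property of every left-invariant field by hand, via BCH and the inclusion $[\mathfrak g_{a-1},\mathfrak g_{b-1}]\subseteq\mathfrak g_{a+b-1}$ — in effect re-proving the filtered analogue of the cited \cite{FS82} lemma instead of importing the graded version through the free lift. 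The paper's detour buys brevity, since the coordinate computation is outsourced to the literature and only the free generators need to be controlled; your argument buys self-containedness, avoids the auxiliary group altogether, and yields the explicit intrinsic bound $k_0=D+1$ with $D$ the weighted degree of $f\circ\exp$. One step you should justify rather than assert is the global validity of $\exp(\xi)\exp(\eta)=\exp(\mathrm{BCH}(\xi,\eta))$ on a connected but not simply connected nilpotent group: it is true, but the clean justification is exactly the paper's reduction, namely projecting the identity from the universal cover $\mathbb G'$ along the covering homomorphism $\pi$ using $\pi\circ\exp_{\mathbb G'}=\exp_{\mathbb G}\circ\pi_*$. (Alternatively, local validity of BCH near the identity already gives your identification of $X_j$ with $\mathcal X_j$ near $e$, and then $X_{j_1}\cdots X_{j_{k_0}}f$, being analytic and vanishing near $e$, vanishes on all of the connected group $\mathbb G$.)
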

\begin{proof}
	Let us first prove the statement when $\mathbb G$ is simply connected. In the latter case $\exp$ is a global analytic diffeomorphism, hence we will abuse a little the notation in the proof of this case: for a function $g:\mathbb G\to\mathbb R$ we will write without making a distinction between $g$ and $g\circ \exp$. 
	Let $\{X_1,\dots,X_n\}$ be a basis of $\mathfrak g$, and let $s$ be the step of nilpotency of $\mathbb G$. From the definition of free-nilpotent Lie algebras we get that there exists a Lie algebra homomorphism $\phi:\mathfrak f _{n,s}\to\mathfrak g$ such that $\phi(X_i')=X_i$ for every $1\leq i\leq n$, and we recall that $V_1':=\mathrm{span}\{X_1',\dots,X_n'\}$ is the first layer of a stratification of $\mathfrak f_{n,s}$, where $X_1',\dots,X'_n$ are the generators of $\mathfrak f_{n,s}$. We stress a little abuse of notation: we will denote with $\phi$ also the map $\exp\circ \phi\circ\exp^{-1}:\mathbb F_{n,s}\to\mathbb G$.
	
	We claim that there exists $k_0\in\mathbb N$ such that for every $Y_1',\dots,Y_{k_0}'\in\{X_1',\dots,X_n'\}$, we have $Y_1'\dots Y_{k_0}'(f\circ\phi)\equiv 0$ on $\mathbb F_{n,s}$.  Indeed, since $\mathfrak f_{n,s}$ is a stratified Lie algebra with $V_1'$ as a first layer, we get that, in exponential coordinates, every $X_i'$, with $1\leq i\leq n$, is an operator of homogeneous degree $-1$; that is to say if $p$ is a polynomial in exponential chart on $\mathbb F_{n,s}$ of homogeneous degree $d$, see the last part of \cref{sec:NilpAndCarnot} for the definition of homogeneous degree, then 
	\begin{equation}\label{eqn:ACC}
	\text{the homogeneous degree of $X_i'\,p$ is less or equal than $d-1$ for every $1\leq i\leq n$}.
	\end{equation}
	The latter assertion is a simple consequence of the explicit expression of $X'_i$, for every $1\leq i\leq n$, in exponential coordinates, see \cite[Proposition 1.26]{FS82}. 
	In conclusion, since $f$ is a polynomial in exponential chart and $\phi$ is a linear map, we get that $f\circ\phi$ is a polynomial in exponential chart as well. Thus the claim is true taking into account \eqref{eqn:ACC}, and setting $k_0$ to be strictly greater than the maximum of the homogeneous degrees of the monomials of $f\circ\phi$.
	
	Now we claim that for every $Y_1,\dots,Y_{k_0}\in\{X_1,\dots,X_n\}$, we have $Y_1\dots Y_{k_0}f\equiv 0$ on $\mathbb G$. Indeed, since $\phi(X_i')=X_i$ for every $1\leq i\leq n$, we conclude that $X_if\circ\phi=X_i'(f\circ\phi)$ on $\mathbb F_{n,s}$ for every $1\leq i\leq n$. Thus, iterating, we obtain that for every $Y_1,\dots,Y_{k_0}\in\{X_1,\dots,X_n\}$ we have 
	$$
	Y_1\dots Y_{k_0}f \circ \phi = Y_1'\dots Y_{k_0}'(f\circ\phi), \qquad \text{on $\mathbb F_{n,s}$}.
	$$
	Thus, since $\phi$ is surjective, the previous equality and the first claim proven above imply the latter claim. The proof of the proposition in the case $\mathbb G$ is simply connected thus follows from the latter claim taking into account that every $Y\in\mathfrak g$ can be written as a linear combination of $X_1,\dots,X_n$. 
	
	Let us now deal with the general case in which $\mathbb G$ is connected. As at the beginning of the proof of \cref{thm:Intro2} we have a unique simply connected nilpotent $\mathbb G'$, with Lie algebra $\mathfrak g$, such that $\mathbb G$ is the quotient of $\mathbb G'$ with one central discrete subgroup $\Gamma$ of $\mathbb G'$. Let $\pi:\mathbb G'\to \mathbb G'\mathrel{/}\Gamma \simeq \mathbb G$ be the projection map, and then one has that $\pi_*:\mathfrak g\to\mathfrak g$ is a bijection. If $f:\mathbb G\to\mathbb R$ is such that $f\circ\exp_{\mathbb G}$ is a polynomial, then also $f\circ\exp_{\mathbb G}\circ\pi_*=f\circ\pi\circ\exp_{\mathbb G'}$ is a polynomial, since $\pi_*$ is bijective. Hence $f\circ\pi$ is polynomial in exponential chart in $\mathbb G'$ and we can apply the first part of this proof to obtain that there exists $k_0$ such that for every $Y_1',\dots,Y_{k_0}'\in \mathrm{Lie}(\mathbb G')\simeq \mathfrak g$ we have $Y_1'\dots Y_{k_0}'(f\circ\pi)\equiv 0$ on $\mathbb G'$. Thus, iteratively applying \eqref{eqn:IterativelyApplying}, and by using that $\pi_*$ is a bijection and $\pi$ is surjective we conclude that for every $Y_1,\dots,Y_{k_0}\in\mathrm{Lie}(\mathbb G)\simeq \mathfrak g$ we have $Y_1\dots Y_{k_0}f\equiv 0$ on $\mathbb G$, that is the sought conclusion.
\end{proof}
We now provide the proof of \cref{prop:Equ2Intro}, and we conclude with a remark.
\begin{proof}[Proof of {\cref{prop:Equ2Intro}}]\label{proof:Equ2Intro}
	(1)$\Rightarrow$(3) is a direct consequence of \cref{thm:Intro2}. (3)$\Rightarrow$(4) is a direct consequence of \cref{prop:POLYKPOLY}. (4)$\Rightarrow$(2) and (2)$\Rightarrow$(1) are trivial by definitions. (4)$\Leftrightarrow$(5) is \cref{prop:Equ1Intro}.
\end{proof}

\begin{remark}[Comparison with the results in \cite{KP20} and \cite{BLU07}]\label{rem:BLUeKP}
	We stress that a slightly weaker statement of the equivalence of (3)$\Leftrightarrow$(5) of \cref{prop:Equ2Intro} has recently appeared in \cite{KP20}. Indeed, in \cite[Theorem C]{KP20} the authors prove the equivalence between being a continuous polynomial map à la Leibman and being a polynomial in exponential chart, in the setting of simply connected nilpotent Lie groups. The proof given there is algebraic and is completely different from ours. Let us moreover notice that we do not ask for the continuity of $f$ in (5) of \cref{prop:Equ2Intro}, but we work with distributions. 
	
	Let us finally stress that, without some regularity assumption on $f$, it is not true that every polynomial map $f:\mathbb G\to\mathbb R$ à la Leibman, see \cref{rem:LeiBravo}, is smooth. Indeed, there exist non-continuous homomorphisms, and thus polynomial maps with degree at most $2$ à la Leibman, from $(\mathbb R,+)$ to $(\mathbb R,+)$.
	
	Let us also stress that the equivalent notions of being polynomial on connected nilpotent Lie groups in \cref{prop:Equ2Intro} agree with the one given in the Carnot setting in \cite[Definition 20.1.1]{BLU07}. We also notice that \cref{thm:Intro2} is a sharpening of a result contained in \cite{BLU07}. Let $\mathbb G$ be an arbitrary Carnot group of step $s$ with stratification $\mathfrak g=V_1\oplus\dots\oplus V_s$ and let $S:=\{X_1,\dots,X_m\}$ be a basis of $V_1$. In \cite[Corollary 20.1.10]{BLU07} it is proved that if there exists a smooth function $f:\mathbb G\to\mathbb R$ and a natural number $d$ such that for every $X_1,\dots,X_d\in S$ we have $X_1\dots X_d f\equiv 0$, then $f$ is a polynomial in exponential chart. Our result \cref{thm:Intro2} improves this criterion in the nilpotent case by only asking that a priori $f$ could be a distribution, and without asking anything on the mixed derivatives; i.e., it suffices that for every $1\leq i\leq m$ there exists $d_i$ such that $X_i^{d_i}f\equiv 0$ in the sense of distributions on $\mathbb G$.
\end{remark}
\appendix
\section{Examples}\label{sec:examples}
We list here some explicit examples of $S$-polynomial functions in some Lie groups. If an adapted basis $(X_1,\dots,X_n)$ of the Lie algebra $\mathfrak g$ of a Carnot group $\mathbb G$ is fixed, when we say that {\em we work in exponential coordinates of the second kind} we mean that we are identifying a point $x\in\mathbb G$ with a point of $\mathbb R^n$ as follows
$$
x\equiv (x_1,\dots,x_n)\leftrightarrow \exp(x_nX_n)\cdot\dots\cdot\exp(x_1X_1).
$$
On the contrary, when we say that {\em we work in exponential coordinates of the first kind} we mean that we are identifying a point $a\in\mathbb G$ with a point in $\mathbb R^n$ as follows
$$
a\equiv (a_1,\dots,a_n)\leftrightarrow \exp(a_1X_1+\dots+a_nX_n).
$$
\vspace{0.2cm}
\textbf{Heisenberg group.} Let $\mathbb H^1$ be the first Heisenberg group with Lie algebra 
$$
\mathfrak h^1=\mathrm{span}\{X_1,X_2\}\oplus\mathrm{span}\{X_3\}=V_1\oplus V_2,,
$$
with the only nontrivial bracket relation $[X_1,X_2]=X_3$. If we work in exponential coordinates of the second kind $(x_1,x_2,x_3)$ with respect to the adapted basis $(X_1,X_2,X_3)$ we can write
$$
X_1=\partial_1, \qquad X_2=\partial_2+x_1\partial_3, \qquad X_3=\partial_3,
$$
see \cite[page 11]{LDT20}.
Every distribution $f$ such that $X_1^2f=X_2^2f=0$ on $\mathbb G$ is represented by a polynomial, see \cref{thm:Intro2}, and moreover one can check with straightforward computations by using the expressions of the vector fields above that  $f\in\mathrm{span}\{1,x_1,x_2,x_3,x_1x_2,x_1x_3\}$. Notice that in this case, for every such $f$, $X_3^2f=0$. Nevertheless it is not true that every $\{X,Y\}$-affine function is affine along every direction of the algebra: indeed, $(X_1+X_2)^k(x_1x_3)\not\equiv 0$ for all $k\leq 3$.

If in addition to $X_1^2f=X_2^2f=0$ we ask that $(X_1X_2+X_2X_1)f=0$, the two conditions together being equivalent to asking that $X^2f=0$ for every $X\in V_1$, we conclude that $f\in\mathrm{span}\{1,x_1,x_2,x_3-(1/2)x_1x_2\}$. Notice that, when read in exponential coordinates of the first kind, the functions $x_1,x_2,x_3-(1/2)x_1x_2$ are precisely the coordinate functions $a_1,a_2,a_3$, respectively. In this way we recover the already known property that every horizontally affine function in $\mathbb H^1$ is actually affine in exponential coordinates of the first kind. For the complete characterization of horizontally affine maps in Carnot groups of step $2$ one can see \cite{LDMR20}.

\vspace{0.2cm}
\textbf{Engel group.} Let $\mathbb E^1$  be the Engel group, i.e., the Carnot group of topological dimension $4$ with stratified algebra
$$
\mathfrak e^1=\mathrm{span}\{X_1,X_2\}\oplus\mathrm{span}\{X_3\}\oplus\mathrm{span}\{X_{4}\},
$$
the only nontrivial bracket relations being $[X_1,X_2]=X_{3}$, and $[X_1,X_3]=X_4$. Working in exponential coordinates of the second kind with respect to the adapted basis $(X_1,X_2,X_3,X_4)$ we can write
$$
X_1=\partial_1, \qquad X_2=\partial_2+x_1\partial_3+(x_1^2/2)\partial_4, \qquad X_3=\partial_3+x_1\partial_4, \qquad X_4=\partial_4,
$$
see \cite[page 13]{LDT20}. We notice that a distribution $f$ is a horizontally affine function on $\mathbb E^1$, i.e., such that $X^2f=0$ for every $X\in\mathrm{span}\{X_1,X_2\}$, if and only if it satisfies the three equalities $X_1^2f=(X_1X_2+X_2X_1)f=X_2^2f=0$ on $\mathbb E^1$. Let us write explicitly the horizontally affine maps in exponential coordinates of the first kind. 

First notice that $X_3=[X_1,X_2]=X_1X_2-X_2X_1$ and $X_4=[X_1,X_3]=X_1^2X_2-2X_1X_2X_1+X_2X_1^2$. Hence, since $f$ is horizontally affine, $X_3f=2X_1X_2f$, by exploiting that $(X_1X_2+X_2X_1)f=0$. Moreover, exploiting $(X_1X_2+X_2X_1)f=X_1^2f=0$ we get $X_4f=3X_1^2X_2f$. 

From $[X_1,X_4]=0$ we deduce $0=[X_1,X_4]f=(3X_1^3X_2-X_1^2X_2X_1)f=4X_1^3X_2f$, where in the second equality we are using that $X_1^2f=0$ and in the third one we are using that $(X_1X_2+X_2X_1)f=0$. Thus $X_1^3X_2f=0$. From $[X_2,X_3]=0$ we deduce $0=[X_2,X_3]f=(2X_2X_1X_2+X_2X_1X_2)f=3X_2X_1X_2f$, where in the second equality we are using that $X_2^2f=0$. Hence $X_2X_1X_2f=0$. From $[X_2,X_4]=0$ we deduce $0=[X_2,X_4]f=(3X_2X_1^2X_2+2X_1X_2X_1X_2-X_2X_1^2X_2)f=2X_2X_1^2X_2f$, where in the second equality we are using $X_2^2f=0$ and in the third one we are using $X_2X_1X_2f=0$, which we obtained before. Then $X_2X_1^2X_2f=0$. 

Since $X_1(X_1^2X_2f)=X_2(X_1^2X_2f)=0$, we get that $X_1^2X_2f$ is constant. Thus there exists $k\in\mathbb R$ such that $X_1^2X_2f\equiv k$. Then $X_1(X_1X_2f)\equiv k$ and $X_2(X_1X_2f)=0$ readily imply that that there exists $h\in\mathbb R$ such that $X_1X_2f=kx_1+h$ in exponential coordinates of the second kind described above. Thus $X_1(X_2f)=kx_1+h$ and $X_2(X_2f)=0$ readily imply that there exists $v\in\mathbb R$ such that $X_2f=kx_1^2/2+hx_1+v$. Moreover, $X_2(X_1f)=-X_1X_2f=-kx_1-h$ and $X_1(X_1f)=0$ yield $X_3(X_1f)\equiv -k$ and $X_4(X_1f)\equiv 0$ so that by integrating the system of PDEs we obtain that there exists $m\in\mathbb R$ such that $X_1f=-kx_3-hx_2+m$.

Thus if $f$ is horizontally affine, there exist $k,h,v,m\in\mathbb R$ such that $X_2f=kx_1^2/2+hx_1+v$ and $X_1f=-kx_3-hx_2+m$. Thus we obtain that $X_3f=(X_1X_2-X_2X_1)f=2kx_1+2h$ and $X_4f=(X_1X_3-X_3X_1)f=3k$. Thus integrating the system of PDEs one obtains that there exists $n\in\mathbb R$ such that $f=k(3x_4-x_1x_3)+h(2x_3-x_1x_2)+vx_2+mx_1+n$. Thus if $f$ is horizontally affine, $f\in\mathrm{span}\{1,x_1,x_2,2x_3-x_1x_2,3x_4-x_1x_3\}$ and it is readily verified that each element of the previous vector space is actually horizontally affine, and thus this is a characterization of the horizontally affine maps in $\mathbb E^1$.

We can check, through simple computations involving BCH formula, that 
$$
\exp(x_4X_4)\exp(x_3X_3)\exp(x_2X_2)\exp(x_1X_1)=\exp(a_1X_1+a_2X_2+a_3X_3+a_4X_4),
$$
implies that $x_1=a_1$, $x_2=a_2$, $x_3=a_3+a_1a_2/2$, and $x_4=a_4+a_1a_3/2+a_1^2a_2/6$. Thus one obtains, by using exponential coordinates of the first kind, that $f$ is horizontally affine if and only if $f\in\mathrm{span}\{1,a_1,a_2,a_3,6a_4+a_1a_3\}$. As a consequence, already in the easiest step-3 Carnot group, one has a horizontally affine function that is not affine in exponential coordinates of the first kind, namely $\widetilde f(a_1,a_2,a_3,a_4):=6a_4+a_1a_3$. As a consequence the sublevel sets of $\widetilde f$ are precisely monotone sets that are not half-spaces.

\vspace{0.3cm}
One can also write down the explicit expression of a family of $\{X_1,X_2\}$-polynomial distributions $f$. It can be proved through some computations involving the explicit expressions of $X_1,X_2$ above that the vector space of the distributions $f$ on $\mathbb E^1$ such that $X_1f=X_2^2f=0$ is ${\rm span}\{1,x_2,x_3,x_4,x_2x_4-x_3^2/2\}$, where the functions are written in exponential coordinates of the second kind associated to $(X_1,X_2,X_3,X_4)$.

\vspace{0.2cm}
\textbf{Free group of step 3 and rank 2.} Let $\mathbb F_{23}$ be the free Carnot group of step 3 and rank 2, with Lie algebra $\mathfrak f_{23}$ equipped with the stratification
$$
\mathfrak f_{23}:=\mathrm{ span}\{X_1,X_2\}\oplus\mathrm {span}\{X_3\}\oplus\mathrm {span}\{X_4,X_5\},
$$
with nontrivial bracket relations being $[X_2,X_1]=X_3$, $[X_3,X_1]=X_4$, $[X_3,X_2]=X_5$. In exponential coordinates of the second kind associated to the adapted basis $(X_1,X_2,X_3,X_4,X_5)$ we can write
$$
X_1=\partial_1, \quad X_2=\partial_2-x_1\partial_3+(x_1^2/2)\partial_4+x_1x_2\partial_5, \quad X_3=\partial_3-x_1\partial_4-x_2\partial_5, \quad X_4=\partial_4,\, X_5=\partial_5,
$$
see \cite[pages 21-22]{BLD19}.

It can be shown through tedious computations involving the explicit expressions of $X_1,X_2$ above that if a distribution $f$ on $\mathbb F_{23}$ is such that $X_1f=0$, and $X_2^2f=0$, then $f$ is represented by a polynomial in exponential chart (this comes from \cref{thm:Intro2}) and the vector space of such $f$'s is $\mathrm{span}\{1,x_2,x_3,x_4,x_2x_4-x_3^2/2,x_5+x_2x_3/2\}$.

\vspace{0.2cm}
\textbf{SL(2,$\mathbb R$).} Let $\mathrm{SL}(2,\mathbb R)$ be the group of $2\times 2$ real matrices with determinant equal to one. Every element of $\mathrm{SL}(2,\mathbb R)$ in a neighbourhood of the identity can be written as 
$$
\begin{bmatrix}
	x_1 & x_2  \\
	x_3 & \frac{1+x_2x_3}{x_1} \\
\end{bmatrix},
$$
for some $(x_1,x_2,x_3)$ in a neighbourhood of $(1,0,0)$. Thus we can use $(x_1,x_2,x_3)$ as coordinates from an open neighbourhood of $(1,0,0)$ in $\mathbb R^3$ to an open neighbourhood of the identity matrix in $\mathrm{SL}(2,\mathbb R)$. It can be computed, see \cite[Example 7.16]{Tor12}, that, in such a neighbourhood of $(1,0,0)$, the left-invariant vector fields $X_1,X_2,X_3$ such that $(X_i)_{|_{(1,0,0)}}=(\partial_i)_{|_{(1,0,0)}}$ for all $1\leq i\leq 3$, are 
$$
X_1=x_1\partial_1-x_2\partial_2+x_3\partial_3, \quad X_2=x_1\partial_2, \quad X_3=x_2\partial_1+\frac{1+x_2x_3}{x_1}\partial_3.
$$
We have that $[X_2,X_3]=X_1$, and then $\{X_2,X_3\}$ Lie generates the lie algebra of $\mathrm{SL}(2,\mathbb R)$. The coordinate function $x_3$ satisfies $X_2x_3=0$ and $X_3^2x_3=X_3((1+x_2x_3)/x_1)=-x_2(1+x_2x_3)/x_1^2+(1+x_2x_3)/x_1\cdot x_2/x_1=0$. Thus the coordinate function $x_3$ is $\{X_2,X_3\}$-polynomial but $X_1^kx_3=x_3$ for every $k\geq 0$, so that in the previous coordinates $x_3$ is not a polynomial à la Leibman on $\mathrm{SL}(2,\mathbb R)$ even if it is $\{X_1,X_2\}$-polynomial.

\vspace{0.2cm}
\textbf{Orientation-preserving affine functions on $\mathbb R$.}
	This example shows that a $k$-polynomial distribution with respect to a subset $S$ that Lie generates $\mathfrak g$ may not be a polynomial distribution according to \cref{def:PolyGeneral}, and thus it may not be a polynomial à la Leibman, see \cref{prop:Equ1Intro}. Let us consider the Lie group of orientation-preserving affinity of the real line
	\begin{equation}\label{ex:PositiveAffine}
	\mathrm{Aff}^+(\mathbb R):=\{\psi:t\in\mathbb R\mapsto yt+x:x,y\in\mathbb R, y>0\},
	\end{equation}
	endowed with the product
	$$
	(x,y)\cdot(\overline x,\overline y)=(y\overline x+x,y\overline y),
	$$
	that comes from the composition of maps. We identify $\mathrm{Aff}^+(\mathbb R)$ with $\mathbb R\times (0,+\infty)$ by means of the choice of coordinates $(x,y)$. The identity element of the group is $(0,1)$ and the left-invariant vector fields $X,Y$ such that 
	$$
	X_{|_{(0,1)}}=(\partial_x)_{|_{(0,1)}}, \qquad Y_{|_{(0,1)}}=(\partial_y)_{|_{(0,1)}},
	$$
	are
	$$
	X_{|_{(x_0,y_0)}}=y_0(\partial_x)_{|_{(x_0,y_0)}}, \qquad Y_{|_{(x_0,y_0)}}=y_0(\partial_y)_{|_{(x_0,y_0)}}, \qquad \text{for all $(x_0,y_0)\in\mathbb R\times (0,+\infty)$}.
	$$
	We claim that the analytic function $f(x,y):=(x+1)\log y$ on $\mathrm{Aff}^+(\mathbb R)$ is $2$-polynomial with respect to $\{X,Y\}$ but it is not polynomial according to \cref{def:PolyGeneral}, and thus it is not a polynomial à la Leibman, see \cref{prop:Equ1Intro}. Indeed, first $X^2f=(y\partial_x)^2((x+1)\log y)=0$, and $Y^2f=(y\partial_y)^2((x+1)\log y)=0$, and then $f$ is 2-polynomial with respect to $\{X,Y\}$. Second, notice that for every $\alpha\in\mathbb N$ we have, by induction, that $Y^{\alpha}Xf = (y\partial_y)^{\alpha}(y\partial_x)((x+1)\log y)=(y\partial_y)^{\alpha}(y\log y)=y\log y+\alpha y$: thus $f$ cannot be a polynomial according to \cref{def:PolyGeneral}, and then it is not a polynomial à la Leibman, see \cref{prop:Equ1Intro}.
	
	Let us claim moreover that $f$ is not $\mathfrak g$-polynomial, even if it is $S$-polynomial, thus showing that there is no propagation of the property of being $S$-polynomial with a Lie generating $S$ in the non-nilpotent case. Indeed, it can be proved by induction that $(X+Y)^nf=y\log y+ny$ for every $n\geq 2$, and $(X+Y)f=y\log y+x+1$. Thus there does not exist any $n\geq 0$ such that $(X+Y)^nf\equiv 0$. 
	
	Let us further notice that the map $(\alpha,\beta)\to f\circ\exp(\alpha X+\beta Y)$ is not polynomial. Indeed, simple computations lead to show that $\exp(\alpha X+\beta Y)=(\alpha/\beta(e^{\beta}-1),e^{\beta})$ for every $(\alpha,\beta)\in\mathbb R\times(\mathbb R\setminus\{0\})$, while $\exp(\alpha X)=(\alpha,1)$, for every $\alpha\in\mathbb R$. Then $f\circ\exp(\alpha X+\beta Y)=\alpha(e^{\beta}-1)+\beta$, for every $(\alpha,\beta)\in\mathbb R^2$, which is not a polynomial in $(\alpha,\beta)\in\mathbb R^2$.


\begin{thebibliography}{FSSC03a}
		\bibitem[ABB19]{ABB19}
		\textsc{Agrachev, A.; Barilari, D.; Boscain, U.:}
		\textit{A Comprehensive Introduction to
		Sub-Riemannian Geometry.} Cambridge Studies in Advanced Mathematics.
		Cambridge University Press, 2019.
%		\bibitem[AK99]{AK99}
%		\textsc{Ambrosio, L.; Kirchheim B.:}
%		\textit{Rectifiable sets in metric and Banach spaces.}
%		Math. Ann. 318
%		(2000), 527–555.
%		\bibitem[ASCV06]{ASCV06}
%		\textsc{Ambrosio, L.; Serra Cassano, F.; Vittone, D.:}
%		\textit{Intrinsic regular hypersurfaces in Heisenberg groups.}
%		J. Geom. Anal. 16 (2006), 187–232.
%		\bibitem[AS09]{AS09}
%		\textsc{Arena, G.; Serapioni, R.:} \textit{Intrinsic regular submanifolds in Heisenberg groups are differentiable graphs}. Calc. Var.
%		Partial Differential Equations 35, (2009), no. 4, 517-536.
		%\bibitem[Bal03]{Bal03}
		%\textsc{Balogh, Z. M.:}
		%\textit{Size of characteristic sets and %functions with prescribed gradient}. J. Reine %Angew. Math. 564 (2003), 63–83.
		%\bibitem[Bel96]{Bel96}
		%\textsc{Bellaïche, A.:}
		%\textit{The tangent space in sub-Riemannian %geometry}. Sub-Riemannian geometry,
		%Progr.Math., vol. 144, Birkhäuser, Basel, %(1996), pp. 1–78.
		\bibitem[BLD19]{BLD19}
		\textsc{Bellettini, C.; Le Donne, E.:}
		\textit{Sets with constant normal in Carnot groups: properties and examples}. Preprint on arXiv, 
		arXiv:1910.12117 (2019).
%		\bibitem[BV10]{BV10}
%		\textsc{Bigolin, F.; Vittone, D.:}
%		\textit{Some remarks about parametrizations of intrinsic regular surfaces
%		in the Heisenberg group.} Publ.Mat. 54(1) (2010), 159–172.
		\bibitem[BLU07]{BLU07}
		\textsc{Bonfiglioli, A.; Lanconelli, E.; Uguzzoni, F.:}
		\textit{Stratified Lie Groups and Potential Theory for their Sub-Laplacians}.
		Springer Monographs in Mathematics, Springer-Verlag Berlin Heidelberg, New York, (2007).
		\bibitem[Buc70]{Buc70}
		\textsc{Buckley, J. T.:}
		\textit{Polynomial functions and wreath products}. {Illinois J. Math.} 14 (1970) 274--282.
%		\bibitem[BBI01]{BBI01}
%		\textsc{Burago, D.; Burago, Y.; Ivanov, S.:}
%		\textit{A course in metric geometry}. Graduate %Studies in Mathematics, Amer. Math.
%		Soc., (2001).
%		\bibitem[CFO19]{CFO19}
%		\textsc{Chousionis, V.; Fässler, K.; Orponen, T.:}
%		\textit{Boundedness of singular integrals on $C^{1,\alpha}$
%		intrinsic graphs in the Heisenberg group}. Adv. Math., (2019), 354:106745.
%		\bibitem[CM06]{CM06}
%		\textsc{Citti, G.; Manfredini, M.:}
%		\textit{Implicit function theorem in Carnot-Carathéodory spaces}. Commun. Contemp. Math. 8(5) (2006), 657–680.
		\bibitem[CK10]{CK10}
		\textsc{Cheeger, J.; Kleiner, B.:}
		\textit{Metric differentiation, monotonicity and maps to $L^1$}. Invent. Math. 182
		(2010), no. 2, 335–370.
		\bibitem[CKN11]{CKN11}
		\textsc{Cheeger, J.; Kleiner, B.; Naor, A.:}
		\textit{Compression bounds for Lipschitz maps from the Heisenberg group
		to $L^1$}. Acta Math. 207 (2011), no. 2, 291–373.
%		\bibitem[CMPSC14]{CMPSC14}
%		\textsc{Citti, G.; Manfredini, M.; Pinamonti, A.; Serra Cassano, F.:} 
%		\textit{Smooth approximation for the intrinsic Lipschitz functions
%		in the Heisenberg group}. Calc. Var. Partial Differ. Equ. 49 (2014), 1279-1308.
%		\bibitem[Cor19]{Cor19}
%		\textsc{Corni, F.:} 
%		\textit{Intrinsic regular surfaces of low codimension in Heisenberg groups}. Preprint on arXiv, arXiv:1903.04415 (2019).
		\bibitem[CG90]{CG90}
		\textsc{Corwin, L. G.; Greenleaf, F. P.:} \textit{Representations of nilpotent Lie groups
		and their applications. Part I: Basic theory and examples}. Volume 18 of Cambridge Studies in Advanced Mathematics. Cambridge University Press, Cambridge, 1990. 
%		\bibitem[CP06]{CP04}
%		\textsc{Cole, D. R.; Pauls, S. D.:}
%		\textit{$C^1$-hypersurfaces of the Heisenberg
%		group are N-rectifiable}. Houston J. Math. 32(3) (2006), 713–724
%		(electronic).
%		\bibitem[DD18]{DiDonato18}
%		\textsc{Di Donato, D.:}
%		\textit{Intrinsic differentiability and intrinsic regular surfaces in Carnot groups}.
%		Preprint on arXiv, arXiv:1811.05457 (2018).
%		\bibitem[DD19]{DiDonato19}
%		\textsc{Di Donato, D.:}
%		\textit{Intrinsic Lipschitz graphs in Carnot groups of step 2}. Preprint on arXiv, 	arXiv:1903.02968 (2019).
%		\bibitem[DMV19]{DMV19}
%		\textsc{Don, S.; Massaccesi, A.; Vittone, D.:}
%		\textit{Rank-one theorem and subgraphs of BV functions in Carnot groups}. J. Funct. Anal. 276 (2019), no. 3, 687–715.
%		\bibitem[Fed69]{Fed69}
		\bibitem[Ehr56]{Ehr56}
		\textsc{Ehrenpreis, L.:}
		\textit{Some properties of distributions on {L}ie groups}. Pacific J. Math. 6 (1956), 591--605.
	%	\textsc{Federer, H.:}
	%	\textit{Geometric Measure Theory}. Springer, %New York (1969).
	\bibitem[Fol95]{Fol95}
	\textsc{Folland, G. B.:}
	\textit{A course in abstract harmonic analysis}. {Studies in Advanced Mathematics}. {CRC Press, Boca Raton, FL} (1995).
	%	\bibitem[FMS14]{FMS14}
	%	\textsc{Franchi, B.; Marchi, M.; Serapioni, R.;}
	%	\textit{Differentiability and approximate %differentiability for intrinsic Lipschitz %functions in Carnot groups and a Rademarcher %Theorem}. Anal. Geom. Metr. Spaces 2 (2014), 258-281.
	\bibitem[FS82]{FS82}
	\textsc{Folland, G. B.; Stein, E. M.:}
	\textit{Hardy spaces on homogeneous groups}. Volume 28, Princeton University Press, (1982).
	%	\bibitem[FS16]{FS16}
	%	\textsc{Franchi, B.; Serapioni, R.:}
	%	\textit{Intrinsic Lipschitz graph within Carnot %groups}. The Journal of Geometric Analysis, Vol.
	%	26 (3) (2016), 1946-1994.
	%	\bibitem[FSSC01]{FSSC01}
	%	\textsc{Franchi, B.; Serapioni, R.; Serra Cassano, F.:}
	%	\textit{Rectificability and perimeter in the Heisenberg group}. Math. Ann. 321
	%	(2001), 479-531.
%		\bibitem[FSSC03a]{FSSC03a}
	%	\textsc{Franchi, B.; Serapion R.; Serra Cassano F.:}
	%	\textit{On the structure of finite perimeter sets in steps 2 Carnot groups}.
	%	J.Geom.Anal. 13 (2003), no. 3, 421-466. 
	%	\bibitem[FSSC03b]{FSSC03b}
	%	\textsc{Franchi, B.; Serapioni, R.; Serra Cassano, F.:}
	%	\textit{Regular hypersurfaces, intrinsic perimeter and implicit function theorem in Carnot groups}. Comm. Anal. Geom. 11 (2003), no. 5, 909-944.
	%	\bibitem[FSSC06]{FSSC06}
	%	\textsc{Franchi, B.; Serapioni, R.; Serra Cassano, F.:}
	%	\textit{Intrinsic Lipschitz graphs in Heisenberg groups}.
	%	J.Nonlinear
	%	Convex Anal. 7(3) (2006), 423–441.
	%	\bibitem[FSSC07]{FSSC07}
	%	\textsc{Franchi, B.; Serapioni, R.; Serra Cassano, F.:}
	%	\textit{Regular Submanifolds, Graphs and area formula in Heisenberg Groups}.
	%	Advances in Math. 211 (2007), no. 1, 152-203.
	%	\bibitem[FSSC11]{FSSC11}
	%	\textsc{Franchi, B.; Serapioni, R.; Serra Cassano, F.:} 
	%	\textit{Rademacher theorem for intrinsic Lipschitz continuous functions}. J. Geom. Anal. 21
	\bibitem[FG02]{Hol}
	\textsc{Fritzsche, K.; Grauert, K.:}
	\textit{From holomorphic functions to complex manifolds}. Graduate Text in Mathematics, Springer (2002).
	\bibitem[GR15]{GR15}
	\textsc{Garofalo, N.; Rotz, K.:}
	\textit{Properties of a frequency of Almgren type for harmonic functions in Carnot groups}. Calc. Variations and Partial Differential Equations, 54 (2015), no. 2, 2197-2238.
	%	(2011), 1044–1084.
	%	\bibitem[GJ16]{GJ16}
	%	\textsc{Ghezzi, R.; Jean, F.:}
	%	\textit{Hausdorff volume in non equiregular sub-Riemannian manifolds}. Nonlinear Analysis 126 (2015), 345-377.
	%	\bibitem[Gong98]{Gong98}
	%	\textsc{Gong, M.-P.;}
	%	\textit{Classification of nilpotent Lie algebras of dimension 7 (over algebraically closed fields and R)}. ProQuest LLC, Ann Arbor, MI, (1998), Thesis (Ph.D.)–
	%	University of Waterloo (Canada). MR 2698220
%	\bibitem[Gra08]{Gra08}
%	\textsc{Grafakos, L.:} 
%	\textit{Classical Fourier analysis}. Volume 249 of Graduate
%	Texts in Mathematics. Springer, New York, second edition, (2008).
	\bibitem[GT06]{GT06}
	\textsc{Green, B. J.; Tao, T.C.:}
	\textit{The quantitative behaviour of polynomial orbits on nilmanifolds}, Annals of Math. 175 (2012), no.
	2, 465–540.
	%	\bibitem[Grom96]{Grom96}
	%	\textsc{Gromov, M.:} \textit{Carnot-Carathéodory spaces seen from within}, Sub-Riemannian Geometry, vol. 144 of Progress
	%	in Mathematics. Birkhauser, Basel, (1996), pp. 79–323.
	%	\bibitem[Hal80]{Hal80}
	%	\textsc{Hale, J. K.:}
	%	\textit{Ordinary differential equations}. Robert E. Krieger Publishing Co., Inc., Huntington, N.Y., second
	%	edition, 1980.
	\bibitem[Hor67]{Hor67}
	\textsc{H\"ormander, L.:}
	\textit{Hypoelliptic second order differential equations}. Acta Math., 119 (1967),
	147-171. 
	%	\bibitem[Jean14]{Jean14}
	%	\textsc{Jean, F.:}
	%	\textit{Control of Nonholonomic Systems: from Sub-Riemannian Geometry to Motion Planning}, Springer Briefs in Mathematics, Springer, Cham, (2014).
	\bibitem[KP20]{KP20}
	\textsc{Kyed, D.; Densing Petersen, H.:}
	\textit{Polynomial cohomology and polynomial maps on nilpotent groups}. Glasgow Mathematical Journal, 62(3) (2020), 706-736. doi:10.1017/S0017089519000429
	%	\bibitem[Koz15]{Koz15}
	%	\textsc{Kozhevnikov, A.:}
	%	\textit{Propriétés métriques des ensembles de niveau des applications différentiables
	%	sur les groupes de Carnot}, Géométrie métrique [math.MG]. Université Paris Sud - Paris XI, (2015).
	%	\bibitem[LD11]{LD11}
	%	\textsc{Le Donne, E.:}
	%	\textit{Metric spaces with unique tangents}. Ann. Acad. Sci. Fenn. Math.
	%	36(2) (2011), 683–694.
		\bibitem[LD17]{LD17}
		\textsc{Le Donne, E.:}
    \textit{Primer on Carnot Groups: Homogeneous groups, Carnot-Carathéodory Spaces, and Regularity
		of Their Isometries}. Anal. Geom. Metr. Spaces 5 (2017), 116-137.
	%	\bibitem[LD17b]{LD17b}
	%	\textsc{Le Donne, E.:}
	%	\textit{Lecture notes on sub-Riemannian geometry.} \url{https://sites.google.com/site/enricoledonne/}, 2017.
		\bibitem[LDMR20]{LDMR20}
		\textsc{Le Donne, E.; Morbidelli, D.; Rigot, S.:}
		\textit{Horizontally affine maps on step-two Carnot groups}. Preprint on arXiv, arXiv:2004.08129 (2020).
		\bibitem[LDNG19]{LDNG19}
		\textsc{Le Donne, E.; Nicolussi Golo, S.:}
		\textit{Metric Lie groups admitting dilations}. Accepted in \textit{Arkiv för Matematik}. Preprint on arXiv, 		arXiv:1901.02559 (2019).
	%	\bibitem[LOW14]{LOW14}
	%	\textsc{Le Donne, E.; Ottazzi, A.; Warhurst, B.:}
	%	\textit{Ultrarigid tangents of sub-Riemannian nilpotent groups}. Ann. Inst. Fourier (Grenoble) 64 (2014), no. 6, 2265–2282.
	%\bibitem[Li]{Li}
	%\textsc{Li, P.:}
	%\textit{Lecture on harmonic functions}
	%Available at \url{http://citeseerx.ist.psu.edu/viewdoc/download?doi=10.1.1.77.1052&rep=rep1&type=pdf}.
	%	\bibi
 %tem[LDY19]{LDY19}
	%	\textsc{Le Donne, E.; Young, R.:}
	%	\textit{Carnot rectifiability of sub-Riemannian manifolds with constant tangent}. Preprint on ArXiv, arXiv:1901.11227 (2019).
	\bibitem[LDT20]{LDT20}
	\textsc{Le Donne, E.; Tripaldi, F.:}
	\textit{A cornucopia of Carnot groups in low dimensions}. Preprint on arXiv, arXiv:2008.12356.
	\bibitem[Lei02]{Lei02}
	\textsc{Leibman, A.:}
	\textit{Polynomial mappings of groups}. Israel J. Math. 129 (2002), 29-60.
	MR 1910931. Zbl 1007.20035. http://dx.doi.org/10.100.
	\bibitem[Mon02]{Mon02}
	\textsc{Montgomery, R.:}
	\textit{A tour of subriemannian geometries, their geodesics and applications}. Mathematical Surveys and Monographs, vol. 91, American Mathematical Society, Prov- idence, RI, (2002).
	%	\bibitem[Mag01]{Mag01}
	%	\textsc{Magnani, V.:}
	%	\textit{Differentiability and area formula on stratified Lie groups}. Houston J. Math. 27(2) (2001), 297–323.
	%	\bibitem[Mag04]{Mag04}
	%	\textsc{Magnani, V.:} \textit{Unrectifiability %and rigidity in stratified groups}. Arch. Math. (Basel) 83 (2004), 568-576.
	%	\bibitem[Mag06]{Mag06}
	%	\textsc{Magnani, V.:}
	%	\textit{Characteristic points, rectifiability and perimeter measure on stratified groups}. J. Eur. Math. Soc., 8 (4) (2006) 585-609.
	%	\bibitem[Mag13]{Mag13}
	%	\textsc{Magnani, V.:}
	%	\textit{Towards differential calculus in stratified groups}. J. Aust. Math. Soc. 95 (2013), no. 1,
	%	76–128.
	%	\bibitem[M19]{M19}
	%	\textsc{Merlo, A.:}
	%	\textit{Geometry of 1-codimensional measures in Heisenberg groups}. Preprint on arXiv, arXiv:1908.11639 (2019).
	%	\bibitem[M20]{M20}
	%	\textsc{Merlo, A.:}
	%	\textit{Marstrand-Mattila rectifiability criterion for 1-codimensional measures in Carnot groups}. Manuscript in preparation (2020).
	%	\bibitem[Mit85]{Mit85}
	%	\textsc{Mitchell, J.:}
	%	\textit{On Carnot-Carathéodory metrics}. Journal of Differential Geom., 21 (1985), 35–45.		
	\bibitem[Mor18]{Mor18}
	\textsc{Morbidelli, D.:}
	\textit{On the inner cone property for convex sets in two-step carnot groups, with applications to monotone sets}. Publ. Mat. (to appear), arXiv e-prints, arXiv:1808.06513 (2018).
	\bibitem[NY18]{NY18}
	\textsc{Naor, A.; Young, R.:}
	\textit{Vertical perimeter versus horizontal perimeter}. Ann. of Math. (2) 188 (2018),
	no. 1, 171–279. 
	%	\bibitem[Orp19]{Orp19}
	%	\textsc{Orponen T.:}
	%	\textit{Intrinsic graphs and big pieces of parabolic Lipschitz images}.
	%	Preprint on arXiv, arXiv:1906.10215 (2019).
		\bibitem[Pan89]{Pan89}
		\textsc{Pansu, P.:}
		\textit{Métriques de Carnot-Carathéodory et quasiisométries des espaces symétriques de rang un}. Ann. of
		Math., 129 (1989), 1-60.
	%	\bibitem[Pau04]{Pau04}
	%	\textsc{Pauls, S.D.:}
	%	\textit{A notion of rectifiability modeled on Carnot groups}. Indiana Univ. Math. J. 53(1) (2004), 49–81.
	\bibitem[Pas68]{Pas68}
	\textsc{Passi, I. B. S.:}
	\textit{Polynomial maps on groups}. Journal of Algebra 9 (1968), 121--151.
%	\bibitem[Ric]{Ric}
%	\textsc{Ricci, F.:}
%	\textit{Sub-Laplacians on Nilpotent Lie Groups.} Course Notes, available at
%	\url{http://homepage.sns.it/fricci/papers/sublapl}.
	\bibitem[Ric06]{Ric06}
	\textsc{Rickly, M.:}
	\textit{First-order regularity of convex functions on Carnot groups}.  J. Geom. Anal. 16 (2006), no. 4,
	679–702.
	%	\bibitem[Rig18]{Rig18}
	%	\textsc{Rigot, S.:}
	%	\textit{Differentiation of measures in metric spaces}. Preprint on arXiv,
	%	arXiv:1802.02069 (2018).
	\bibitem[Rud91]{Rudin}
	\textsc{Rudin, W.:}
	\textit{Functional Analysis}. McGraw-Hill (1991). 
	\bibitem[Sze85]{Sze85}
	\textsc{Sz\'{e}kelyhidi, L.:}
	\textit{Regularity properties of polynomials on groups}. Acta Math. Hungar. 45 (1985) 15--19.
%	\bibitem[Sze82]{Sze82}
%	\textsc{Sz\'{e}kelyhidi, L.:}
%	\textit{Note on exponential polynomials}. {Pacific J. Math.} 103 (1982) 583--587.
%	}
	%	\bibitem[TY04]{TY04}
	%	\textsc{Tan, K.H.; Yang X.P.:}
	%	\textit{On some sub-Riemannian objects of hypersurfaces in
	%	sub-Riemannian manifolds}. 
	%	Bull. Austral. Math. Soc., 10 (2004), 177-198.
	\bibitem[Tor12]{Tor12}
	\textsc{Torres del Castillo, G. F.:}
	\textit{Differentiable manifolds. A theoretical physics approach}. Birkh\"{a}user/Springer, New York, (2012), viii+275.
	\bibitem[Var84]{Varadarajan}
	\textsc{Varadarajan, V. S.:}
	\textit{Lie Groups, Lie Algebras, and Their Representations}. Volume 102 of Graduate Texts in Mathematics (1984). 
	%	\bibitem[Vil09]{Vil09}
	%	\textsc{Villani, C.:}
	%	\textit{Optimal transport. Old and new}. vol. 338 of Grundlehren der Mathematischen Wissenschaften, Springer-Verlag,
	%	Berlin, (2009).
	\end{thebibliography}
\end{document}